\newtheorem{theorem}{Theorem}
\newtheorem{remark}{Remark}
\newtheorem{definition}{Definition}
\newtheorem{lemma}{Lemma}
\newtheorem{example}{Example}[section]
\numberwithin{equation}{section}
\title{Discrete Maximal Regularity of Time-Stepping Schemes for Fractional Evolution Equations\thanks{The research of B. Jin
has been partly supported by UK EPSRC EP/M025160/1, and that of B. Li
was partially carried during a research stay at University of T\"{u}bingen, funded by the
Alexandre von Humboldt foundation. The research of Z. Zhou is supported in part by the AFOSR MURI Center for Material Failure Prediction
through Peridynamics and the ARO MURI Grant W911NF-15-1-0562.}
}
\author{Bangti Jin\thanks{Department of Computer Science, University College London, Gower Street, London WC1E 6BT, UK
(\texttt{b.jin@ucl.ac.uk})}
\and Buyang Li\thanks{Department of Applied Mathematics, The Hong Kong Polytechnic University, Hung Hom, Hong Kong.
(\texttt{buyang.li@polyu.edu.hk})}
\and Zhi Zhou\thanks{Department of Applied Physics and Applied Mathematics,
Columbia University, 500 W. 120th Street, New York, NY 10027, USA (\texttt{zz2393@columbia.edu})}
}
\date{\today}
\begin{document}

\maketitle

\begin{abstract}
In this work, we establish the maximal $\ell^p$-regularity for several time
stepping schemes for a fractional evolution model, which involves a
fractional derivative of order $\alpha\in(0,2)$, $\alpha\neq 1$, in time. These schemes include convolution
quadratures generated by backward Euler method and second-order backward difference formula,
the L1 scheme, explicit Euler method and a fractional variant of the Crank--Nicolson method. The main tools for the analysis include
operator-valued Fourier multiplier theorem due to Weis \cite{Weis:2001} and
its discrete analogue due to Blunck \cite{Blunck:2001}. These results generalize the corresponding results for parabolic problems.\\
\textbf{Keywords}: discrete maximal regularity, fractional evolution equation, convolution quadrature, L1 scheme, explicit Euler method, Crank-Nicolson method
\end{abstract}

\section{Introduction}
Maximal $L^p$-regularity is an important mathematical tool in studying the existence, uniqueness and
regularity of solutions of nonlinear partial differential equations of parabolic type. A generator $A$
of an analytic semigroup on a Banach space $X$ is said to have maximal $L^p$-regularity, if the
solution $u$ of the following parabolic differential equation
\begin{equation}\label{eqn:parabolic}
  \begin{aligned}
     u^\prime(t) & = A u + f\quad \forall t>0,\\
    u(0) & =0,
  \end{aligned}
\end{equation}
satisfies the following estimate
\begin{equation}\label{eqn:parabolic-maximal-reg}
  \| u^\prime \|_{L^p(\mathbb{R}^+;X)} + \|Au\|_{L^p(\mathbb{R}^+;X)}\leq c_{p,X}\|f\|_{L^p(\mathbb{R}^+;X)}\quad \forall f\in L^p(\mathbb{R}^+;X),
\end{equation}
with $1<p<\infty$. On a Hilbert space $X$, every generator of a bounded analytic semigroup has maximal
$L^p$-regularity \cite{deSimon:1964}, and Hilbert spaces are only spaces for which
this holds true \cite{KaltonLancien:2000}. Beyond Hilbert spaces, an important
and very useful characterization of the maximal $L^p$-regularity was given by Weis \cite{Weis:2001} on
UMD spaces in terms of the $R$-boundedness of a family of operators using the resolvent $R(z;A):=(z-A)^{-1}$;
see Theorem \ref{thm:Weis} in Section \ref{sec:prelim} for details.

An important question from the perspective of numerical analysis is whether such maximal regularity estimates
carry over to time-stepping schemes for discretizing the parabolic problem \eqref{eqn:parabolic},
which have important applications in numerical analysis of nonlinear parabolic problems
\cite{AkrivisLi2017,AkrivisLiLubich2016,Geissert:2007,KLC2016,LiSun2015-Regularity}.
This question has been studied in a number of works from different aspects
\cite{AshyralyevSobolevskii:1994,AshyralyevPiskarevWeis:2002,Geissert:2006,Geissert:2006b,LeykekhmanVexler:2017,Li2015,LiSun2017-MCOM}.
Ashyralyev, Piskarev and Weis
\cite{AshyralyevPiskarevWeis:2002} showed the following discrete maximal regularity: for all $f^n\in X,\,\,
  n=1,2,\dots$,
\begin{equation*}
  \tau^{-1}\| (u^n-u^{n-1})_{n=1}^N\|_{\ell^p(X)} + \|(Au^n)_{n=1}^N\|_{\ell^p(X)}\leq c_{p,X}\|(f^n)_{n=1}^N\|_{\ell^p(X)}
\end{equation*}
for the time-discrete solutions $u^n$, $n=1,2,\dots,$ given by the implicit Euler method, where $\tau$ is
the time step size and the constant $c_{p,X}$ is independent of $\tau$.
A variant of the maximal $\ell^p$-regularity for the Crank-Nicolson method was also shown in
\cite{AshyralyevPiskarevWeis:2002}. Recently, Kov\'{a}cs, Li and Lubich \cite{KovacsLiLubich:2015}
proved the discrete maximal regularity for the Crank-Nicolson, BDF and $A$-stable Runge--Kutta methods. Kemmochi and Saito \cite{Kemmochi:2015,KS2016} proved the maximal $\ell^p$-regularity for the $\theta$-method. In these works,
the main tools are the maximal $L^p$-regularity characterization due to Weis \cite{Weis:2001} and its discrete analogue due to Blunck \cite{Blunck:2001}.
Independently, Leykekhman and Vexler \cite{LeykekhmanVexler:2017}
proved the maximal $L^p$-regularity of discontinuous Galerkin methods without using Blunck's multiplier technique. The maximal $\ell^p$-regularity of fully discrete numerical solutions have been investigated in \cite{Kemmochi:2015,KS2016,LeykekhmanVexler:2017} and \cite{LiSun2017-SINUM} for parabolic equations with time-independent and time-dependent coefficients, respectively; also see \cite[section 6]{KovacsLiLubich:2015}.

The maximal $L^p$-regularity has also been studied for the following fractional evolution equation
\begin{equation}\label{eqn:fde}
    \partial_t^\alpha u(t) = A u(t) + f\quad\forall t>0,\\
\end{equation}
together with the following initial condition(s)
\begin{equation*}
  \begin{aligned}
    u(0) & = 0,  && \mbox{if} \quad 0<\alpha<1,\\
    u(0)  &= 0, \quad \partial_t u (0) = 0,
    && \mbox{if}\quad  1<\alpha<2.
  \end{aligned}
\end{equation*}
In the model \eqref{eqn:fde},
the notation $\partial_t^\alpha u$ denotes the Caputo fractional derivative of order $\alpha$
of $u$ with respect to time $t$, defined by  \cite[pp. 91]{KilbasSrivastavaTrujillo:2006}
\begin{equation*}
\partial_t^\alpha u(t)=
\frac{1}{\Gamma(n-\alpha)}  \int_0^t(t-s)^{n -\alpha-1}\frac{d^n}{ds^n}u(s)ds,
\quad n-1<\alpha<n, \, n\in\mathbb{N},
\end{equation*}
where the Gamma function $\Gamma(\cdot)$ is defined by $\Gamma(z)=\int_0^\infty s^{z-1}e^{-s}ds$,
$\Re z>0$. With zero initial condition(s), it is identical with the Riemann-Liouville one
\cite[pp. 70]{KilbasSrivastavaTrujillo:2006}
$$
^R\kern -.5mm\partial_t^{\alpha} u(t)=\frac{1}{\Gamma(n-\alpha)}
\frac{d^n}{dt^n}\int_0^t(t-s)^{n-\alpha-1}u(s)ds,
\quad n-1<\alpha<n, \, n\in\mathbb{N}.
$$
Throughout, we use only the notation $\partial_t^\alpha u$ to denote either derivative.
When $\alpha=1$, the fractional derivative $\partial_t^\alpha u(t)$ coincides with the usual first-order derivative
$u^\prime(t)$, and accordingly, the fractional model \eqref{eqn:fde} recovers the standard parabolic equation \eqref{eqn:parabolic}.
In this paper we focus on the fractional cases $0 <
\alpha < 1$ and $1 < \alpha < 2$, which are known as the subdiffusion
and diffusion-wave equation, respectively. In analogy with Brownian
motion for normal diffusion \eqref{eqn:parabolic}, the model \eqref{eqn:fde} with $0 <\alpha < 1$ is the macroscopic
counterpart of continuous time random walk.

The fractional model \eqref{eqn:fde} has received much attention in recent years, since it can adequately capture the
dynamics of anomalous diffusion processes. For example, the subdiffusion equation, i.e., $\alpha\in(0,1)$, has been
employed to describe transport in column experiments, thermal diffusion in media with fractal geometry, and
flow in highly heterogeneous aquifers. See \cite{MetzlerKlafter:2000} for an extensive list of applications.
 The diffusion-wave equation, i.e., $\alpha\in(1,2)$, can be used to model mechanical wave propagation in viscoelastic media.

In a series of interesting works \cite{Bajlekov:2001,Bazhlekova:2002,BazhlekovaClement:2003}, Bazhalekov and collaborators have established the following maximal $L^p$-regularity
for the fractional model \eqref{eqn:fde}: for any $1<p<\infty$, $u\in L^p(\mathbb{R}^+;D(A))$ and
\begin{equation}\label{eqn:maximal-fde}
  \|\partial_t^\alpha u \|_{L^p(\mathbb{R}^+;X)} + \|Au\|_{L^p(\mathbb{R}^+;X)}\leq c_{p,X}\|f\|_{L^p(\mathbb{R}^+;X)}\quad \forall f\in L^p(\mathbb{R}^+;X),
\end{equation}
 under suitable conditions on the operator $A$ (see Theorem \ref{thm:maximal-fde} in Section
\ref{sec:prelim} for details). Further, they applied the theory to analyze nonautonomous and semilinear problems
\cite{Bajlekov:2001,BazhlekovaClement:2003}. See also \cite{Pruss:1993} for
closely related maximal regularity results for Volterra integro-differential equations.

The discrete analogue of \eqref{eqn:maximal-fde} is important for the numerical analysis of
nonautonomous and nonlinear fractional evolution problems.  The only existing result we are
aware of is the very recent work of Lizama \cite{Lizama:2015}.
Specifically, Lizama studied the following fractional difference equation with $0<\alpha<1$:
$$
\Delta^\alpha u^n=Tu^n+f^n ,
$$
where $u^0=0$ and $\Delta^\alpha$ is a certain fractional difference operator. The author established
the maximal $\ell^p$-regularity for the problem, under the condition that
the set $\{\delta(z)(\delta(z)-T)^{-1}:|z|=1,z\neq1\}$ is $R$-bounded, with $\delta(z)=z^{1-\alpha}(1-z)^\alpha$,
following the work of Blunck \cite{Blunck:2001}.
It can be interpreted as a time-stepping
scheme: upon letting $T=\tau^\alpha A$ and $f^n=\tau^\alpha g^n$, we get $\tau ^{-\alpha} \Delta^\alpha u^n= Au^n+g^n$. Hence, it
amounts to a convolution quadrature generated by the kernel  $z^{1-\alpha}(1-z)^\alpha$.
However, this scheme lacks the maximal $\ell^p$-regularity if $A=\Delta$, the Dirichlet Laplacian operator in bounded domains.

In this work, we address the following question: \textit{Under which conditions do the time discretizations
of \eqref{eqn:fde} preserve the maximal $\ell^p$-regularity, uniformly in the step size $\tau$?} We provide an
analysis for several time-stepping schemes, including the convolution quadratures generated by the implicit
Euler method and second-order backward difference formula \cite{CuestaLubichPalencia:2006,JinLazarovZhou:2016sisc},
the L1 scheme \cite{LinXu:2007,SunWu:2006},
the explicit Euler method \cite{YusteAcedo:2005} and a fractional variant of the Crank--Nicolson method.
Amongst them, the convolution quadrature is relatively easy to analyze.
In contrast, the L1 scheme and explicit Euler method are easy to implement, but challenging to analyze. The explicit Euler method requires a bounded numerical range of the operator $A$ and the step size $\tau$ to be small enough. The maximal $\ell^p$-regularity of the Crank--Nicolson method behaves like the implicit Euler scheme when $0<\alpha<1$ and like the explicit Euler scheme when $1<\alpha<2$.
Our proof strategy follows closely the recent
works \cite{KovacsLiLubich:2015,Kemmochi:2015} and employs the (discrete) Fourier multiplier technique of Blunck \cite{Blunck:2001}.

The rest of the paper is organized as follows. In Section \ref{sec:prelim} we recall basic tools
for showing maximal $\ell^p$-regularity, including $R$-boundedness, UMD spaces, and Fourier multiplier
theorems. Then four classes of time-stepping schemes, i.e., convolution quadrature, L1 scheme, explicit
Euler method and a variant of the fractional Crank-Nicolson method, are discussed in Sections
\ref{sec:CQ}-\ref{sec:CN}, respectively. In Section \ref{sec:inhomo}, we discuss the extension
to nonzero initial data. Last, in Section \ref{sec:example} we illustrate the results
with several concrete examples.

We conclude the introduction with some notation. For a Banach space $X$, we
denote by $\mathcal{B}(X)$ the set of all bounded linear operators from $X$ into itself. For a linear
operator $A$ on $X$, we denote by $\sigma(A)$ and $\rho(A)$ its spectrum and resolvent set, respectively. We denote
the unit circle in the complex plane $\mathbb{C}$ by $\mathbb{D}=\{z:|z|=1\}$, and $\mathbb{D}^\prime=\{z:|z|=1,z\neq\pm1\}$.
Given any $\theta\in(0,\pi),$ the notation $\Sigma_\theta$ denotes the open  sector
$
  \Sigma_\theta:=\left\{z\in\mathbb{C}: |\arg(z)|< \theta, z\neq0\right\} ,
$
where ${\rm arg}(z)$ denotes the argument of $z\in{\mathbb C}\backslash\{0\}$ in the range $(-\pi,\pi]$.
Throughout, the notation $c$ and $C$, with or without a subscript/superscript, denote a generic constant, which may
differ at different occurrences, but it is always independent of the time step size $\tau$ and the number $N$ of time steps.

\section{Preliminaries}\label{sec:prelim}
In this section we collect basic results on the maximal $L^p$-regularity and related concepts, especially
$R$-boundedness, UMD spaces, and Fourier multiplier theorems, used in the fundamental
work of Weis \cite{Weis:2001}, where he characterized the maximal $L^p$-regularity of an operator $A$ in
terms of its resolvent operator $R(z;A):=(z-A)^{-1}$. We refer readers to the review
\cite{KunstmannWeis:2004} for details.

\subsection{$R$-boundedness}

The concept of $R$-boundedness plays a crucial role in Weis' operator-valued Fourier multiplier theorem and
its discrete analogue. A collection of operators $\mathcal{M}=\{M(\lambda): \lambda\in \Lambda\}\subset \mathcal{B}(X)$ is said to be
$R$-bounded if there is a constant $c>0$ such that any finite subcollection of operators
$\{M(\lambda_j)\}_{j=1}^l$ satisfies
\begin{equation}\label{eqn:R-bound}
  \int_0^1\bigg\|\sum_{j=1}^lr_j(s)M(\lambda_j)v_j\bigg\|_Xds\leq c\int_0^1\bigg\|\sum_{j=1}^lr_j(s)v_j\bigg\|_Xds\quad \forall v_1,v_2,\ldots,v_l\in X,
\end{equation}
where $r_j(s)=\mathrm{sign}\sin(2j\pi s)$, $j=1,2,\ldots,$ are the Rademacher functions defined
on the interval $[0,1]$. The infimum of the constant $c$ satisfying \eqref{eqn:R-bound}, denoted
by $R(\mathcal{M})$ below, is called the $R$-bound of the set $\mathcal{M}$.
In particular, if $\Lambda\subset\{z\in\mathbb{C}: |z|\leq c_0\}$
for some $c_0>0$, then the set $\{\lambda I: \lambda\in \Lambda\}$ is $R$-bounded with an $R$-bound $2c_0$.
This fact will be used extensively below.

There are a number of basic properties of $R$-bounded sets, summarized below. They follow
from definition and the proofs can be found in \cite{KunstmannWeis:2004}.
\begin{lemma}\label{lem:RB-calculus}
Let $\mathcal{T}\subset \mathcal{B}(X)$ be an $R$-bounded set. Then the following statements hold.
\begin{itemize}
  \item[$(\mathrm{i})$] If $\mathcal{S}\subset \mathcal{T}$, then $\mathcal{S}$ is $R$-bounded with $R(\mathcal{S})\leq R(\mathcal{T})$.
  \item[$(\mathrm{ii})$] The closure $\overline{\mathcal{T}}$ in $\mathcal{B}(X)$ is also $R$-bounded, and $R(\overline{\mathcal{T}})=R({\cal T})$.
  \item[$(\mathrm{iii})$] If $\mathcal{S}\subset \mathcal{B}(X)$ is $R$-bounded, then the union ${\cal S}\cup {\cal T}$ and sum ${\cal S+T}$ are
   $R$-bounded, with bounds $R({\cal S\cup T})\leq R({\cal S})+R({\cal T})$ and $R({\cal S+T})\leq R({\cal S})+R({\cal T})$.
  \item[$(\mathrm{iv})$] If ${\cal S}\subset\mathcal{B}(X)$ is $R$-bounded, then ${\cal ST}$ is $R$-bounded with $R({\cal ST})=R({\cal S})R({\cal T})$.
  \item[$(\mathrm{v})$] The convex hull $\mathrm{CH}({\cal T})$ is $R$-bounded with $R(\mathrm{CH}({\cal T}))\leq R({\cal T})$.
  \item[$(\mathrm{vi})$] The absolutely convex hull of ${\cal T}$, denoted by $\mathrm{ACH}_\mathbb{C}({\cal T})$, is $R$-bounded, with $R(\mathrm{ACH}_\mathbb{C}({\cal T}))\leq 2R({\cal T})$.
\end{itemize}
\end{lemma}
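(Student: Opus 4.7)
The plan is to verify the six properties in the order listed, since each follows from the defining inequality \eqref{eqn:R-bound} either directly or by invoking one of three elementary tools: the triangle inequality in $L^1([0,1];X)$, an averaging (Jensen) argument, or Kahane's contraction principle.

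For $(\mathrm{i})$, every finite subcollection drawn from $\mathcal{S}$ is also a finite subcollection from $\mathcal{T}$, so \eqref{eqn:R-bound} for $\mathcal{T}$ immediately gives the same bound on $\mathcal{S}$. For $(\mathrm{ii})$, given $M_j\in\overline{\mathcal{T}}$ I would approximate each by some $M_j^{\varepsilon}\in\mathcal{T}$ in operator norm, apply the bound for $\mathcal{T}$ to the collection $\{M_j^{\varepsilon}\}$, and let $\varepsilon\to 0$, exploiting continuity of the integrand in the operator norm; the reverse inequality $R(\mathcal{T})\le R(\overline{\mathcal{T}})$ is just $(\mathrm{i})$. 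For $(\mathrm{iii})$, the union bound follows by splitting the index set $\{1,\dots,l\}$ according to whether each operator lies in $\mathcal{S}$ or $\mathcal{T}$, while the sum bound follows by writing $(S_j+T_j)v_j=S_jv_j+T_jv_j$, splitting the Rademacher sum in two, and applying the triangle inequality in $L^1([0,1];X)$. For $(\mathrm{iv})$, given $S_j\in\mathcal{S}$, $T_j\in\mathcal{T}$ and $v_j\in X$, I would first apply the $R$-bound of $\mathcal{S}$ to the vectors $T_j v_j$ to control $\int_0^1\|\sum_j r_j(s)S_jT_jv_j\|_X\,ds$ by $R(\mathcal{S})\int_0^1\|\sum_j r_j(s)T_jv_j\|_X\,ds$, then apply the $R$-bound of $\mathcal{T}$ to the vectors $v_j$, which yields $R(\mathcal{ST})\le R(\mathcal{S})R(\mathcal{T})$.

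For $(\mathrm{v})$ and $(\mathrm{vi})$ the central tool is Kahane's contraction principle: for scalars $|a_j|\le 1$ and vectors $y_j\in X$, $\int_0^1\|\sum_j r_j(s)\,a_j y_j\|_X\,ds\le K_X\int_0^1\|\sum_j r_j(s)\,y_j\|_X\,ds$, with $K_X=1$ in the real case and $K_X=2$ in the complex case. For $(\mathrm{v})$, I would write each convex-hull element $T^{(j)}=\sum_k\lambda_k^{(j)} T_k^{(j)}$ as the expectation $\mathbb{E}\,\widetilde T^{(j)}$ of a random operator valued in $\mathcal{T}$, pull the expectation outside the norm via Jensen's inequality applied to the convex functional $(T^{(j)})_j\mapsto\int_0^1\|\sum_j r_j(s) T^{(j)} v_j\|_X\,ds$, apply the $R$-bound of $\mathcal{T}$ sample by sample, and finally take expectation to recover the $v_j$ on the right-hand side. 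For $(\mathrm{vi})$, an absolutely convex combination $\sum_k\mu_k^{(j)}T_k^{(j)}$ with $\sum_k|\mu_k^{(j)}|\le 1$ decomposes as the convex combination $\sum_k|\mu_k^{(j)}|\,(e^{i\arg\mu_k^{(j)}}T_k^{(j)})$, and after absorbing the unimodular phases $e^{i\arg\mu_k^{(j)}}$ by the complex contraction principle one reduces to $(\mathrm{v})$, which accounts for the factor $2$.

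The only genuinely delicate step is $(\mathrm{v})$ (and, via it, $(\mathrm{vi})$): I must set up the random operator realizing the convex combination correctly and justify the interchange of expectation with the $L^1([0,1];X)$ norm through Jensen's inequality, after which the remaining bookkeeping is routine. Items $(\mathrm{i})$--$(\mathrm{iv})$ are straightforward manipulations of \eqref{eqn:R-bound} and should present no difficulty.
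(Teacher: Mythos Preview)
Your sketch is correct and follows the standard arguments found in Kunstmann--Weis \cite{KunstmannWeis:2004}. The paper does not give its own proof of this lemma: it simply states that the properties ``follow from definition'' and refers the reader to \cite{KunstmannWeis:2004}, so there is no argument in the paper to compare against. One minor remark: in (iv) the paper writes $R(\mathcal{ST})=R(\mathcal{S})R(\mathcal{T})$, but only the inequality $\leq$ holds in general (and only that is ever used later), which is precisely what your two-step application of the $R$-bounds yields.
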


The following useful result is a slight extension of \cite[Corollary 3.5]{Blunck:2001}.
\begin{lemma}\label{lem:Blunk}
Let $A$ be a closed and densely defined operator in $X$, and $\delta\in(0,\pi)$. If $\{zR(z;A):z\in\Sigma_{\delta}\}$ is $R$-bounded, then there exists $\delta^\prime\in(\delta,\pi)$
such that $\{zR(z;A): z\in \Sigma_{\delta^\prime}\}$ is $R$-bounded.
\end{lemma}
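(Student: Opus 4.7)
The plan is to extend the $R$-boundedness from $\Sigma_\delta$ to a slightly larger sector $\Sigma_{\delta'}$ via a Neumann-series perturbation argument, exactly in the spirit of how analyticity of the resolvent is usually proved, but now tracked at the level of $R$-bounds using the calculus in Lemma \ref{lem:RB-calculus}.

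First I would set $M := R(\{zR(z;A):z\in\Sigma_\delta\})$. Since every $R$-bounded family is norm bounded, this gives $\|R(z;A)\|\le M/|z|$ for $z\in\Sigma_\delta$. Fix $\eta\in(0,1/M)$ and choose $\delta'\in(\delta,\pi)$ with $\delta'-\delta<\eta$. Given any $w=re^{i\theta}\in\Sigma_{\delta'}$, choose $z(w)=re^{i\phi}\in\overline{\Sigma_\delta}$ with $\phi$ as close to $\theta$ as possible (so $\phi=\theta$ if $|\theta|\le\delta$, otherwise $\phi=\mathrm{sgn}(\theta)\delta$). Then $|z(w)-w|=r|e^{i\phi}-e^{i\theta}|\le r(\delta'-\delta)$, so $|z(w)-w|/|z(w)|\le\eta$ and $|w/z(w)|\le1+\eta$. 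A standard perturbation argument (using $\|(z-w)R(z;A)\|\le\eta M<1$) shows $w\in\rho(A)$ and gives the Neumann expansion
\begin{equation*}
wR(w;A)=\sum_{k=0}^\infty c_k(w)\,\bigl(z(w)R(z(w);A)\bigr)^{k+1},\qquad c_k(w):=\frac{w}{z(w)}\left(\frac{z(w)-w}{z(w)}\right)^{\!k},
\end{equation*}
with $|c_k(w)|\le(1+\eta)\eta^k$. Note that to make this legitimate on the boundary ray, I would first use part (ii) of Lemma \ref{lem:RB-calculus} to pass $R$-boundedness to $\overline{\Sigma_\delta}$ (the resolvent extends there by the uniform norm bound).

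Next I would bound each term of the series in $R$-norm. For fixed $k$, the set $\{(zR(z;A))^{k+1}:z\in\overline{\Sigma_\delta}\}$ is $R$-bounded with bound $\le M^{k+1}$ by part (iv) of Lemma \ref{lem:RB-calculus}, and so $\{(z(w)R(z(w);A))^{k+1}:w\in\Sigma_{\delta'}\}$ inherits the same bound by part (i). The scalar multipliers $\{c_k(w)I:w\in\Sigma_{\delta'}\}$ lie in the disc of radius $(1+\eta)\eta^k$, hence are $R$-bounded with bound $\le 2(1+\eta)\eta^k$. Part (iv) then gives the $k$-th term an $R$-bound of at most $2(1+\eta)\eta^k M^{k+1}$.

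Finally I would conclude by summing: the partial sums $S_N(w):=\sum_{k=0}^N c_k(w)(z(w)R(z(w);A))^{k+1}$ form an $R$-bounded family (part (iii)) with $R$-bound at most $\sum_{k=0}^N 2(1+\eta)\eta^k M^{k+1}\le \frac{2(1+\eta)M}{1-\eta M}$, uniformly in $N$. Since $S_N(w)\to wR(w;A)$ in norm for each $w$, the limit family lies in the closure of the convex hull of $\bigcup_N\{S_N(w):w\in\Sigma_{\delta'}\}$ in $\mathcal{B}(X)$, so $R$-boundedness is preserved via parts (ii) and (v) of Lemma \ref{lem:RB-calculus}. The main point to watch is this last passage from finite partial sums to the infinite series: one has to take a closure and invoke uniformity of the $R$-bound in $N$ rather than a naive ``sum of infinitely many $R$-bounded sets'' argument, which is false in general. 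Everything else is bookkeeping within the $R$-boundedness calculus.
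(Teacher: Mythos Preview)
Your Neumann-series approach is correct and more self-contained than the paper's proof. The paper does not carry out any $R$-bound bookkeeping itself; instead it rotates the boundary ray $\{\rho e^{i\delta}:\rho>0\}$ onto the imaginary axis via $z\mapsto e^{i(\pi/2-\delta)}z$, $A\mapsto e^{i(\pi/2-\delta)}A$, then invokes the \emph{proof} of \cite[Corollary 3.5]{Blunck:2001} as a black box to open up a small sector past the imaginary axis, and rotates back. Your argument essentially reproves what lies inside that citation, at an arbitrary angle rather than only $\pi/2$, and delivers an explicit $R$-bound for the enlarged family as a bonus.

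One remark on your final step: the phrase ``the limit family lies in the closure of the convex hull of $\bigcup_N\{S_N(w)\}$, so apply (ii) and (v)'' tacitly requires the \emph{union} $\bigcup_N\{S_N(w):w\}$ to be $R$-bounded, and uniform $R$-boundedness of the individual slices $\{S_N(w):w\}$ does not give that by itself. The cleanest fix is to skip partial sums altogether: since the $k$-th term family has $R$-bound $C_k$ with $\sum_k C_k<\infty$, the triangle inequality inside the Rademacher integral yields $R\bigl(\{\sum_k T_k(w):w\}\bigr)\le\sum_k C_k$ directly --- this ``series with summable $R$-bounds'' fact is true and standard (see \cite{KunstmannWeis:2004}) and is not the naive false statement you warn against. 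Alternatively, for any fixed $w_1,\dots,w_l$ simply let $N\to\infty$ in the inequality $\int_0^1\|\sum_j r_j(s)S_N(w_j)v_j\|\,ds\le C\int_0^1\|\sum_j r_j(s)v_j\|\,ds$, which involves only finitely many norm limits.
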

\begin{proof}$\,\,$
In fact, the $R$-boundedness of $\{zR(z;A):z\in \Sigma_{\delta}\}$ implies the $R$-boundedness
of $\{\rho e^{\mathrm{i}\delta} R(\rho e^{\mathrm{i}\delta};A):\rho>0\}$. Via a rotation in
the complex plane $\mathbb{C}$, we see that $\{\rho\mathrm{ i} R(\rho \mathrm{i} ;e^{{\rm i}(\pi/2-\delta)}A):
\rho>0\}$ is $R$-bounded. Then the proof of \cite[Corollary 3.5]{Blunck:2001}
implies the $R$-boundedness of $\{w R(w;e^{{\rm i}(\pi/2-\delta)}A): \pi/2<\arg(w)<\pi/2+\vartheta\}$ for some small $\vartheta>0$.
By rotating back in the complex plane $\mathbb{C}$, we have the $R$-boundedness of $\{z R(z;A): \delta<\arg(z)<\delta+\vartheta\}$.
The $R$-boundedness of $\{z R(z;A): -\delta-\vartheta<\arg(z)<-\delta\}$ follows similarly. Overall, the set
 $\{z R(z;A): z\in\Sigma_{\delta+\vartheta}\}$ is $R$-bounded.
\qed\end{proof}

\subsection{Operator-valued multiplier theorems on UMD spaces}
Now we recall the concept of UMD spaces, which is essential for multiplier theorems.
Let $S(\mathbb{R};X)$ denote the space of rapidly decreasing $X$-valued functions. A Banach space
$X$ is said to be a UMD space if the Hilbert transform
\begin{equation*}
  Hf(t)=\mathrm{P.V.}\int_\mathbb{R}\frac{1}{t-s}f(s)ds,
\end{equation*}
defined on the space $S(\mathbb{R};X)$, can be extended to a bounded operator on $L^p(\mathbb{R};X)$
for all $1<p<\infty$. Equivalently, this can be characterized
by unconditional martingale differences, hence the abbreviation UMD. Examples of UMD spaces
include Hilbert spaces, finite-dimensional Banach spaces, and $L^q(\Omega,d\mu)$ ($(\Omega,\mu)$
is a $\sigma$-finite measure space, $1<q<\infty$), and its closed subspaces (e.g., Sobolev spaces
$W^{m,p}(\Omega)$, $1<p<\infty$), and the product space of UMD spaces. Throughout, $X$ always
denotes a UMD space. Next we recall the concept of $R$-sectoriality operator.
The definition below is equivalent to \cite[Section 1.11]{KunstmannWeis:2004}
by changing $A$ to $-A$ and changing $\theta$ to $\pi-\theta$.

\begin{definition}\label{Def-R-sectorial}
An operator $A:D(A)\rightarrow X$ is said to be sectorial of angle $\theta$ if the
following three conditions are satisfied:
\begin{itemize}
\item[$\mathrm{(i)}$] $A:D(A)\rightarrow X$ is a closed operator and its domain $D(A)$ is dense in $X$;
\item[$\mathrm{(ii)}$] The spectrum of $A$ is contained in
the sector ${\mathbb C}\backslash \Sigma_{\theta}$;
\item[$\mathrm{(iii)}$] The set of operators $\{zR(z;A) :z\in \Sigma_{\theta}\}$ is bounded in $\mathcal{B}(X)$.
\end{itemize}
Similarly, $A$ is said to be $R$-sectorial of angle
$\theta$ if (i), (ii) and the following condition hold:
\begin{itemize}
\item[$\mathrm{(iii}')$] The set of operators $\{zR(z;A) :z\in \Sigma_{\theta}\}$ is $R$-bounded in $\mathcal{B}(X)$.
\end{itemize}
\end{definition}

The following theorem is a simple consequence of
Dore \cite[Theorem 2.1]{Dore:1991} and
Weis \cite[Theorem 4.2]{Weis:2001}.

\begin{theorem}\label{thm:Weis}
A densely defined closed operator $A$ on a UMD space $X$ has maximal parabolic $L^p$-regularity \eqref{eqn:parabolic-maximal-reg} if and only if $A$ is $R$-sectorial of angle $\pi/2$.
\end{theorem}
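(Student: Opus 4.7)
\medskip
\noindent\textbf{Proof proposal.} The statement is a characterization, so I would prove both implications. The sufficiency (that $R$-sectoriality implies maximal regularity) comes from Weis's operator-valued Fourier multiplier theorem, while the necessity (that maximal regularity forces $R$-sectoriality) combines Dore's analytic semigroup result with a transference-type argument from \cite{Weis:2001}.

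For sufficiency, I would extend $u$ and $f$ by zero for $t<0$ and take the temporal Fourier transform of $u'=Au+f$ to obtain $i\xi\widehat u(\xi)=A\widehat u(\xi)+\widehat f(\xi)$, so that $\widehat u(\xi)=R(i\xi;A)\widehat f(\xi)$ and $\widehat{Au}(\xi)=(M(\xi)-I)\widehat f(\xi)$ with $M(\xi):=i\xi R(i\xi;A)$. By Weis's theorem, estimate \eqref{eqn:parabolic-maximal-reg} reduces to showing that the families $\{M(\xi):\xi\in\mathbb{R}\setminus\{0\}\}$ and $\{\xi M'(\xi):\xi\in\mathbb{R}\setminus\{0\}\}$ are both $R$-bounded on $X$. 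The first is handled by Lemma \ref{lem:Blunk}: the $R$-sectoriality of angle $\pi/2$ extends to some angle $\pi/2+\vartheta$, so the punctured imaginary axis lies inside a sector on which $\{zR(z;A)\}$ is $R$-bounded. For the second, a direct differentiation yields
\begin{equation*}
\xi M'(\xi)=i\xi R(i\xi;A)+\xi^2 R(i\xi;A)^2=M(\xi)-M(\xi)^2,
\end{equation*}
whose $R$-boundedness follows from parts (iii) and (iv) of Lemma \ref{lem:RB-calculus}.

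For necessity, I would first apply Dore's theorem \cite[Theorem 2.1]{Dore:1991}: maximal $L^p$-regularity on a UMD space forces $A$ to generate a bounded analytic semigroup, and in particular $A$ is sectorial of angle $\pi/2$ in the sense of Definition \ref{Def-R-sectorial}. The remaining upgrade from sectoriality to $R$-sectoriality is the converse half of Weis's theorem \cite[Theorem 4.2]{Weis:2001}: maximal $L^p$-regularity says exactly that $M(\xi)=i\xi R(i\xi;A)$ is a bounded Fourier multiplier on $L^p(\mathbb{R};X)$, and on a UMD space this operator-norm boundedness upgrades to pointwise $R$-boundedness of the multiplier family via Kahane's contraction principle and the randomization argument in \cite{Weis:2001}.

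I expect the main obstacle to be this last step. The sufficiency is essentially bookkeeping once Weis's Fourier multiplier theorem is at hand: the derivative identity is a two-line calculation and the calculus of $R$-bounds (Lemma \ref{lem:RB-calculus}) together with Lemma \ref{lem:Blunk} closes the argument. Passing, however, from $L^p(\mathbb{R};X)$-boundedness of an operator-valued symbol back to pointwise $R$-boundedness of its values uses the UMD hypothesis on $X$ in an essential, non-formal way, and is the content of Weis's harder implication.
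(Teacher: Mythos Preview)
Your proposal is correct and follows exactly the route the paper indicates: the paper does not give its own proof of this theorem but simply records it as ``a simple consequence of Dore \cite[Theorem 2.1]{Dore:1991} and Weis \cite[Theorem 4.2]{Weis:2001}'', and then remarks that the ``if'' direction comes from the operator-valued Fourier multiplier theorem (Theorem \ref{thm:Fourier-multiplier}). Your sufficiency argument---Fourier transform, the symbol $M(\xi)=i\xi R(i\xi;A)$, the derivative identity $\xi M'(\xi)=M(\xi)-M(\xi)^2$, and the use of Lemma \ref{lem:Blunk} to push the angle past $\pi/2$ so that the punctured imaginary axis is covered---is precisely the template the paper later reuses verbatim for the discrete schemes, and your necessity argument (Dore for the analytic semigroup, then Weis's harder implication for the upgrade to $R$-boundedness) matches the cited references.
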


The ``if'' direction in Theorem \ref{thm:Weis} is a consequence of
the following operator-valued Fourier multiplier theorem \cite[Theorem 3.4]{Weis:2001},
where $\mathcal{F}$ denotes the Fourier transform on $\mathbb{R}$, i.e.,
\begin{equation*}
  \mathcal{F}f(\xi) = \int_\mathbb{R}e^{-\mathrm{i}\xi t}f(t)dt\quad \xi\in\mathbb{R}.
\end{equation*}
\begin{theorem}\label{thm:Fourier-multiplier}
Let $X$ be a UMD space. Let $M:\mathbb{R}\setminus\{0\}\to \mathcal{B}(X)$ be differentiable such that the set
\begin{equation*}
  \{M(\xi):\xi\in\mathbb{R}\setminus\{0\}\}\cup\{\xi M^\prime(\xi):\xi\in\mathbb{R}\setminus\{0\}\}\mbox{ is $R$-bounded},
\end{equation*}
with an $R$-bound $c_R$. Then $\mathcal{M}f:=\mathcal{F}^{-1}(M(\cdot)(\mathcal{F}f)(\cdot))$ extends to a bounded operator
\begin{equation*}
  \mathcal{M}: L^p(\mathbb{R},X)\to L^p(\mathbb{R},X)\quad  \mbox{for } 1<p<\infty.
\end{equation*}
Further, there exists $c_{p,X}>0$ independent of $M$ such that the operator norm of $\mathcal{M}$ is bounded by $c_Rc_{p,X}$.
\end{theorem}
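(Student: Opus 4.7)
The plan is to adapt Weis' original argument via a frequency-side Littlewood--Paley decomposition, combining the $R$-boundedness hypothesis with the two analytic consequences of $X$ being UMD: boundedness of the Hilbert transform on $L^p(\mathbb{R};X)$ and, through Bourgain's theorem, an unconditional Littlewood--Paley decomposition with Rademacher averages in place of the scalar square function.

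First I would fix a smooth partition of unity $\sum_{k\in\mathbb{Z}}\varphi_k\equiv 1$ on $\mathbb{R}\setminus\{0\}$, with $\varphi_k(\xi)=\varphi_0(2^{-k}\xi)$ and $\mathrm{supp}\,\varphi_0\subset\{1/2\le|\xi|\le 2\}$, and introduce the dyadic projections $\Delta_k f:=\mathcal{F}^{-1}(\varphi_k\mathcal{F}f)$. Because $X$ is UMD, Bourgain's theorem yields the equivalence
\begin{equation*}
\|g\|_{L^p(\mathbb{R};X)}\ \approx\ \int_0^1\bigg\|\sum_k r_k(s)\,\Delta_k g\bigg\|_{L^p(\mathbb{R};X)}ds,
\end{equation*}
with constants depending only on $p$ and $X$. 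The goal then becomes to prove an estimate of the same form for $\mathcal{M}f$, with an extra factor of $c_R$.

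Next, on each block I would pick a reference frequency $\xi_k\in\mathrm{supp}\,\varphi_k$ and split the $k$-th piece of $\mathcal{M}f$ into a main term $M(\xi_k)\Delta_k f$ and a perturbation $\mathcal{F}^{-1}\bigl(\varphi_k(M(\cdot)-M(\xi_k))\mathcal{F}f\bigr)$. Parametrizing $\eta(\sigma)=\xi_k+\sigma(\xi-\xi_k)$ and writing $M(\xi)-M(\xi_k)=\int_0^1 M'(\eta(\sigma))(\xi-\xi_k)\,d\sigma$, the perturbation becomes an integral in $\sigma$ of operators of the form $a_k(\sigma,\xi)\,\eta(\sigma)M'(\eta(\sigma))$, where the scalar factor $a_k$ is uniformly bounded in $k$ and $\sigma$ since $|\xi-\xi_k|\lesssim|\eta(\sigma)|\sim 2^k$ on $\mathrm{supp}\,\varphi_k$. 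By Lemma \ref{lem:RB-calculus}(i)--(v) and an integral (Riemann-sum) version of $R$-boundedness, the resulting family of block perturbations is $R$-bounded with constant proportional to $c_R$. The core transfer step is then to apply the definition \eqref{eqn:R-bound} pointwise in $t$ to the vectors $v_k:=\Delta_k f(t)\in X$, yielding
\begin{equation*}
\int_0^1\bigg\|\sum_k r_k(s)M(\xi_k)\Delta_k f(t)\bigg\|_X ds\ \le\ c_R\int_0^1\bigg\|\sum_k r_k(s)\Delta_k f(t)\bigg\|_X ds,
\end{equation*}
and the same estimate with $M(\xi_k)$ replaced by the perturbation operators. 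Taking the $L^p$-norm in $t$ and invoking Bourgain's equivalence in both directions closes the bound $\|\mathcal{M}f\|_{L^p(\mathbb{R};X)}\le c_R c_{p,X}\|f\|_{L^p(\mathbb{R};X)}$ on the dense class of Schwartz functions, from which the conclusion follows by extension.

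The main obstacle I anticipate is the second step: converting the pointwise derivative bound on $M$ into an $R$-bounded family of block perturbations with the correct $c_R$ dependence. This is exactly where the Mikhlin-type hypothesis on $\{\xi M'(\xi)\}$ is engineered to act, but a rigorous execution demands Kahane's contraction principle (to pull scalar factors out of Rademacher averages) together with the fact that integrals of $R$-bounded operator-valued functions remain $R$-bounded. Without the UMD hypothesis, Bourgain's Littlewood--Paley equivalence fails and there is no way to reduce the operator-valued multiplier problem to a scalar one, which is the structural reason the theorem is restricted to UMD spaces.
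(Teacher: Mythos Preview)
The paper does not contain a proof of this theorem. It is stated as a known result, cited as \cite[Theorem 3.4]{Weis:2001}, and used as a black box (to derive Theorem~\ref{thm:maximal-fde}); no argument is supplied. So there is no ``paper's own proof'' to compare your proposal against.

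That said, your sketch follows the architecture of the known proofs: Bourgain's UMD Littlewood--Paley equivalence, freezing the multiplier at a reference point $\xi_k$ on each dyadic block, and using the $R$-boundedness of $\{\xi M'(\xi)\}$ for the remainder. The main term $M(\xi_k)\Delta_k f$ is handled correctly, since on each block the operator is constant and the definition of $R$-boundedness applies directly to the vectors $\Delta_k f(t)$.

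The perturbation step, which you rightly flag as the crux, does not close as written. After the fundamental-theorem-of-calculus identity, the symbol $a_k(\sigma,\xi)\,\eta(\sigma)M'(\eta(\sigma))$ with $\eta(\sigma)=\xi_k+\sigma(\xi-\xi_k)$ is still a genuine $\xi$-dependent operator-valued multiplier on the block, \emph{not} a fixed operator applied to $\Delta_k f$. You therefore cannot invoke $R$-boundedness pointwise in $t$ as you did for the main term: knowing that the range $\{\eta M'(\eta)\}$ lies in an $R$-bounded set does not by itself bound the associated Fourier multiplier on $L^p(\mathbb{R};X)$ --- that is precisely the content of the theorem you are trying to prove. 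The standard remedy (see \cite[\S4]{KunstmannWeis:2004}) is different: one expands $\varphi_k M$ on each dyadic interval as an absolutely convergent Fourier series whose operator coefficients lie in a fixed multiple of the closed absolutely convex hull of $\{M(\xi)\}\cup\{\xi M'(\xi)\}$ (this is where the Mikhlin condition enters, and Lemma~\ref{lem:RB-calculus}(ii),(vi) keeps the hull $R$-bounded), so that each summand is a fixed operator times a scalar unimodular multiplier; Kahane's contraction principle then handles the scalar parts uniformly. Your proposal identifies the right ingredients but substitutes a step that is circular.
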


Using Theorem \ref{thm:Fourier-multiplier}, one can similarly show the following maximal
regularity result for the fractional model \eqref{eqn:fde} \cite{Bajlekov:2001,Bazhlekova:2002,BazhlekovaClement:2003},
which naturally extends the ``if'' part of Theorem \ref{thm:Weis} to the fractional case.

\begin{theorem}\label{thm:maximal-fde}
Let $A$ be an $R$-sectorial operator of angle $\alpha\pi/2$ on a UMD space $X$.
Then the solution of \eqref{eqn:fde} satisfies the maximal $L^p$-regularity
estimate \eqref{eqn:maximal-fde} for any $1<p<\infty$.
\end{theorem}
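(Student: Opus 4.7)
The plan is to apply the operator-valued Fourier multiplier theorem (Theorem \ref{thm:Fourier-multiplier}) to the explicit solution representation obtained via Fourier transform in time. After extending $f$ and $u$ by zero from $\mathbb{R}^+$ to $\mathbb{R}$, the vanishing initial data together with the identification of the Caputo and Riemann--Liouville derivatives implies that the Fourier symbol of $\partial_t^\alpha$ is $(\mathrm{i}\xi)^\alpha$. Hence the equation $\partial_t^\alpha u = Au + f$ transforms into $\widehat{u}(\xi) = R((\mathrm{i}\xi)^\alpha;A)\widehat{f}(\xi)$, and so
\begin{equation*}
\widehat{\partial_t^\alpha u}(\xi) = M(\xi)\widehat{f}(\xi), \qquad \widehat{Au}(\xi) = \bigl(M(\xi) - I\bigr)\widehat{f}(\xi),
\end{equation*}
with the operator-valued symbol $M(\xi) := (\mathrm{i}\xi)^\alpha R((\mathrm{i}\xi)^\alpha;A)$ for $\xi\in\mathbb{R}\setminus\{0\}$. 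Producing \eqref{eqn:maximal-fde} therefore reduces to checking that $\{M(\xi)\}$ and $\{\xi M'(\xi)\}$ are $R$-bounded families.

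The crucial preliminary step is to enlarge the $R$-sectoriality angle of $A$ from $\alpha\pi/2$ to $\alpha\pi/2 + \vartheta$ for some $\vartheta>0$ by invoking Lemma \ref{lem:Blunk}. This is essential because $(\mathrm{i}\xi)^\alpha = |\xi|^\alpha e^{\pm\mathrm{i}\alpha\pi/2}$ lies exactly on the boundary of $\Sigma_{\alpha\pi/2}$, so without a strict enlargement of the sector one cannot place $M(\xi)$ inside an $R$-bounded family. Once the enlargement is in hand, the inclusion
\begin{equation*}
\{M(\xi) : \xi\in\mathbb{R}\setminus\{0\}\} \subset \{z R(z;A) : z\in \Sigma_{\alpha\pi/2+\vartheta}\}
\end{equation*}
combined with Lemma \ref{lem:RB-calculus}(i) immediately yields the $R$-boundedness of the first family.

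For the derivative condition, the chain rule applied with $z = (\mathrm{i}\xi)^\alpha$ and the elementary identity $\xi\, dz/d\xi = \alpha z$ gives
\begin{equation*}
\xi M'(\xi) = \alpha\, z R(z;A) - \alpha\, \bigl[z R(z;A)\bigr]^2.
\end{equation*}
Since $\{zR(z;A) : z \in \Sigma_{\alpha\pi/2+\vartheta}\}$ is $R$-bounded, the product, sum, and scalar-multiplication rules of Lemma \ref{lem:RB-calculus}(iii)--(iv) deliver the $R$-boundedness of $\{\xi M'(\xi)\}$. Applying Theorem \ref{thm:Fourier-multiplier} to the symbols $M(\xi)$ and $M(\xi)-I$ (whose derivatives coincide) then gives bounded Fourier multiplier operators from $L^p(\mathbb{R};X)$ into itself, which is exactly the estimate \eqref{eqn:maximal-fde}.

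I expect the only delicate point to be the first step: justifying that Lemma \ref{lem:Blunk} is applicable to push the $R$-sectoriality past the boundary on which $(\mathrm{i}\xi)^\alpha$ travels. This is precisely the content of that lemma, so the obstacle is conceptual rather than technical; everything else is a routine application of the $R$-boundedness calculus together with the chain rule.
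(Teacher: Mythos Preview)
Your proposal is correct and follows precisely the approach the paper indicates: the paper does not spell out a proof of Theorem~\ref{thm:maximal-fde} but cites \cite{Bajlekov:2001,Bazhlekova:2002,BazhlekovaClement:2003} and remarks that it follows from Theorem~\ref{thm:Fourier-multiplier} in the same way as the parabolic case, which is exactly the multiplier argument you wrote out (and which mirrors the paper's own proofs of the discrete analogues, e.g.\ Theorem~\ref{thm:Euler}). Your observation that Lemma~\ref{lem:Blunk} is needed to push the $R$-sectoriality angle strictly past $\alpha\pi/2$---because $(\mathrm{i}\xi)^\alpha$ lies on the boundary ray of $\Sigma_{\alpha\pi/2}$ rather than in its interior---is a genuine point that the paper's one-line remark glosses over, and your derivative computation $\xi M'(\xi)=\alpha M(\xi)-\alpha M(\xi)^2$ is correct.
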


In this work, we discuss the discrete analogue of Theorem \ref{thm:maximal-fde} for
a number of time-stepping schemes for solving \eqref{eqn:fde}, under the same condition on the operator
$A$, using a discrete version of Theorem
\ref{thm:Fourier-multiplier} due to Blunck \cite{Blunck:2001}.
We slightly abuse $\mathcal{F}$ for the Fourier transform on $\mathbb{Z}_+:=\{n\in{\mathbb Z}: n\ge 0\}$, which
maps a sequence $(f^n)_{n=0}^\infty$ to its Fourier series on the interval $(0,2\pi)$, i.e.,
\begin{equation*}
  \mathcal{F}f(\theta ) = \sum_{n=0}^\infty e^{-\mathrm{i}n\theta }f^n \, ,\quad \forall\, \theta \in (0,2\pi),
\end{equation*}
and let ${\mathcal F}_\theta^{-1}$ denote the inverse Fourier transform
with respect to $\theta $, i.e.,
\begin{equation*}
{\mathcal F}_\theta^{-1}f(\theta) =
\Big(\frac{1}{2\pi}\int_0^{2\pi} f(\theta)e^{\mathrm{i} n\theta} d\theta \Big)_{n=0}^\infty \, .
\end{equation*}

The following result is an immediate consequence of \cite[Theorem 1.3]{Blunck:2001}, and will be used extensively;
see also \cite{Kemmochi:2015} for a proof with a more explicit constant. The statement
is equivalent to Blunck's original theorem via the transformation $\xi=e^{-{\rm i}\theta}$, but avoids introducing
a different notation $\widetilde M(\theta)$.
\begin{theorem}
\label{thm:blunk}
Let $X$ be a UMD space, and let $M:{\mathbb D}'\to \mathcal{B}(X)$ be
differentiable such that the set
\begin{equation}\label{eqn:set-Blunck}
  \left\{ M(\xi):\xi\in {\mathbb D}'\right\} \cup \left\{(1-\xi) (1+\xi) M^\prime(\xi): \xi\in {\mathbb D}'\right\}
\end{equation}
is $R$-bounded, with an $R$-bound $c_R$.
Then $\mathcal{M}f:=\mathcal{F}_\theta^{-1}( M(e^{-\mathrm{i} \theta})
(\mathcal{F}f)(\theta))$ extends to a bounded operator
\begin{equation*}
  \mathcal{M}:\ell^p(\mathbb{Z}_+,X)\to \ell^p(\mathbb{Z}_+,X)\quad \mbox{for } \,1<p<\infty.
\end{equation*}
Further, there exists a $c_{p,X}>0$ independent of $ M$ such that the operator norm of $\mathcal{M}$ is bounded by $c_Rc_{p,X}$.
\end{theorem}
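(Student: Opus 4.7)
The plan is to derive this theorem from Blunck's original multiplier theorem \cite[Theorem 1.3]{Blunck:2001} via the substitution $\xi = e^{-\mathrm{i}\theta}$. Setting $\widetilde M(\theta) := M(e^{-\mathrm{i}\theta})$ for $\theta \in (0,2\pi) \setminus \{0,\pi\}$, the operator in the statement becomes $\mathcal M f = \mathcal F_\theta^{-1}(\widetilde M(\theta)(\mathcal F f)(\theta))$, which is exactly the form analyzed by Blunck; the excluded points $\xi = \pm 1$ correspond to $\theta \in \{0,\pi\}$, so the domain $\mathbb{D}'$ is compatible with the punctured torus that appears in Blunck's hypothesis.

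The crux is to verify that the condition \eqref{eqn:set-Blunck} is equivalent to the $R$-boundedness of $\{\widetilde M(\theta)\} \cup \{\sin\theta\,\widetilde M'(\theta)\}$ used by Blunck. The first set transports trivially. For the derivative, the chain rule gives $\widetilde M'(\theta) = -\mathrm{i}\xi M'(\xi)$, and a direct computation yields $1-\xi^2 = 2\mathrm{i}\xi\sin\theta$, so that
\begin{equation*}
(1-\xi)(1+\xi) M'(\xi) = (1-\xi^2) M'(\xi) = 2\mathrm{i}\xi\sin\theta\, M'(\xi) = -2\sin\theta\,\widetilde M'(\theta).
\end{equation*}
Because $\{2\sin\theta\, I : \theta \in \mathbb{R}\}$ is a bounded family of scalar multiples of the identity, hence $R$-bounded, Lemma \ref{lem:RB-calculus}(iv) shows that the two $R$-boundedness conditions are equivalent up to a universal multiplicative constant, which is then absorbed into $c_R$.

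With the hypothesis recast in Blunck's form, his theorem yields that $\mathcal M$ extends to a bounded operator on $\ell^p(\mathbb{Z}, X)$ with norm bounded by $c_R c_{p,X}$. To descend to $\ell^p(\mathbb{Z}_+, X)$, I extend any $f \in \ell^p(\mathbb{Z}_+, X)$ by zero to $\mathbb{Z}$; the resulting Fourier series agrees with $\mathcal F f$ on $(0,2\pi)$, so applying the $\mathbb{Z}$-version of $\mathcal M$ and then restricting to non-negative indices reproduces precisely the operator defined in the statement. Since both extension and restriction are contractions on $\ell^p$, the operator norm is preserved.

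The only real obstacle is careful bookkeeping in the substitution and in the passage from $\mathbb{Z}$ to $\mathbb{Z}_+$; no new analytic input is required beyond Blunck's theorem, the elementary identity $1-\xi^2 = 2\mathrm{i}\xi\sin\theta$, and the $R$-calculus recorded in Lemma \ref{lem:RB-calculus}. The convenience of this reformulation, as the authors note, is that it expresses the hypothesis directly in terms of $M(\xi)$ on $\mathbb{D}'$, which is the form most convenient for the resolvent estimates to follow.
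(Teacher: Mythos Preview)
Your proposal is correct and follows exactly the route the paper indicates: the paper states that the theorem ``is an immediate consequence of \cite[Theorem 1.3]{Blunck:2001}'' and that ``the statement is equivalent to Blunck's original theorem via the transformation $\xi=e^{-\mathrm{i}\theta}$,'' without spelling out any details. Your computation $(1-\xi^2)M'(\xi)=-2\sin\theta\,\widetilde M'(\theta)$ and the extension-by-zero/restriction argument for passing from $\ell^p(\mathbb{Z},X)$ to $\ell^p(\mathbb{Z}_+,X)$ are precisely the bookkeeping that makes this ``immediate consequence'' explicit.
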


To simplify the notations,
for a given sequence $\{M_n\}_{n=0}^\infty$ of operators on a UMD space $X$, we define the generating function
\begin{equation}
  M(\xi): = \sum_{n=0}^\infty M_n\xi^n\quad \forall \xi\in{\mathbb D}' .
\end{equation}
Likewise, the generating function $f(\xi)$ of a sequence $(f^n)_{n=0}^\infty$ is defined by
\begin{equation}\label{def-gener}
f(\xi):=\sum_{n=0}^\infty f^n\xi^n.
\end{equation}
The operator $\mathcal{M}$ is then given by
$(\mathcal{M}f)_n=\sum_{j=0}^nM_{n-j}f^j$, $n=0,1,\ldots$
The generating function satisfies the
convolution rule
\begin{align}\label{conv-rule}
(f\ast g)(\xi)=f(\xi)g(\xi) ,
\end{align}
where $
(f\ast g)_n:=\sum_{j=0}^n f_jg_{n-j},$ $n= 0 ,1,\dots $

\section{Convolution quadrature}\label{sec:CQ}

The convolution quadrature of Lubich (see the review \cite{Lubich:2004} and references therein) presents
one versatile framework for developing time-stepping schemes for the
model \eqref{eqn:fde}. One salient feature is that it inherits
excellent stability property (of that for ODEs). We shall consider convolution quadrature generated by backward Euler (BE)
 and second-order backward difference formula (BDF2),
whose error analysis has been carried out in \cite{CuestaLubichPalencia:2006,JinLazarovZhou:2016sisc}.

\subsection{BE scheme}
We first illustrate basic ideas to prove discrete maximal regularity on BE
scheme in time $t$, with a constant time step size $\tau>0$. The BE scheme for
\eqref{eqn:fde} is given by: given $u^0=0$, find $u^n\in X$
\begin{equation}\label{eqn:Euler}
  \bar\partial_\tau^\alpha u^n = A u^n + f^n,\quad n=1,2,\ldots
\end{equation}
where the BE approximation $\bar\partial_\tau^\alpha u^n$ to $\partial_t^\alpha u(t_n)$ is given by
\begin{equation}\label{eqn:BE}
  \bar\partial_\tau^\alpha u^n = \tau^{-\alpha}\sum_{j=0}^nb_{n-j}u^j\quad \mbox{with }
\sum_{j=0}^\infty b_j\xi^j=\delta(\xi)^\alpha:=(1-\xi)^\alpha,
\end{equation}
where $\delta(\xi)=1-\xi$ is the characteristic function of the BE method.

Now we can state the discrete maximal regularity of the BE scheme \eqref{eqn:Euler}.
\begin{theorem}\label{thm:Euler}
Let $X$ be a UMD space, $0<\alpha<1$ or $1<\alpha<2$, and let $A$ be an $R$-sectorial operator on $X$ of angle $\alpha\pi/2$.
Then the BE scheme
\eqref{eqn:Euler} has the following maximal $\ell^p$-regularity
\begin{equation*}
  \|(\bar\partial_\tau^\alpha u^n)_{n=1}^N\|_{\ell^p(X)}+ \|(Au^n)_{n=1}^N\|_{\ell^p(X)}\leq c_{p,X}c_R\|(f^n)_{n=1}^N\|_{\ell^p(X)},
\end{equation*}
where the constant $c_{p,X}$ is independent of $N$, $\tau$ and $A$,
and $c_R$ denotes the $R$-bound of the set of operators
$\{zR(z;A) :z\in \Sigma_{\alpha\pi/2}\}$.
\end{theorem}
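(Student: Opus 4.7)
The natural route is to encode the resolvent map of the scheme as an operator-valued symbol and apply Blunck's multiplier theorem (Theorem \ref{thm:blunk}). First I would take the generating function of \eqref{eqn:Euler}. Since $u^0=0$, the convolution rule \eqref{conv-rule} together with \eqref{eqn:BE} yields
\[
\tau^{-\alpha}(1-\xi)^\alpha\, u(\xi) \,=\, A u(\xi) + f(\xi),
\]
so that, setting $z=z(\xi):=\tau^{-\alpha}(1-\xi)^\alpha$ and $M(\xi):=zR(z;A)-I$, we have $Au(\xi)=M(\xi)f(\xi)$. Thus verifying the two hypotheses of Theorem \ref{thm:blunk} for this $M$ delivers $\|(Au^n)\|_{\ell^p(X)}\le c_{p,X}c_R\|(f^n)\|_{\ell^p(X)}$, and $(\bar\partial_\tau^\alpha u^n)$ is then controlled directly from \eqref{eqn:Euler} by the triangle inequality.

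To check the first hypothesis, $R$-boundedness of $\{M(\xi):\xi\in\mathbb{D}'\}$, write $\xi=e^{-\mathrm{i}\theta}$ with $\theta\in(0,2\pi)\setminus\{\pi\}$. The elementary identity $1-\xi=2\sin(\theta/2)\,e^{\mathrm{i}(\pi/2-\theta/2)}$ gives $\arg z=\alpha(\pi/2-\theta/2)\in(-\alpha\pi/2,\alpha\pi/2)$, so $z\in\Sigma_{\alpha\pi/2}\setminus\{0\}$. The $R$-sectoriality hypothesis then provides $R$-boundedness of $\{zR(z;A):\xi\in\mathbb{D}'\}$ with $R$-bound at most $c_R$, and adding the singleton $\{I\}$ via Lemma \ref{lem:RB-calculus}(iii) handles $M(\xi)$. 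For the second hypothesis I compute
\[
\frac{dz}{d\xi}=-\frac{\alpha z}{1-\xi},\qquad \frac{d}{dz}\bigl(zR(z;A)\bigr)=R(z;A)-zR(z;A)^2,
\]
which combine to give the key identity
\[
(1-\xi)(1+\xi)M'(\xi)\,=\,-\alpha(1+\xi)\bigl[\,zR(z;A)-(zR(z;A))^2\,\bigr].
\]
The $(1-\xi)$ weight from Blunck's criterion exactly absorbs the only singularity of $M'(\xi)$, which sits at $\xi=1$ and comes from differentiating $\delta(\xi)^\alpha$. What remains is the scalar family $\{-\alpha(1+\xi):\xi\in\mathbb{D}'\}$, trivially $R$-bounded by $4\alpha$, times a set that is $R$-bounded by Lemma \ref{lem:RB-calculus}(iii)--(iv) applied to $\{zR(z;A)\}$ and its square.

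Combining these two $R$-boundedness checks with Theorem \ref{thm:blunk} yields $\|(Au^n)\|_{\ell^p(X)}\le c_{p,X}c_R\|(f^n)\|_{\ell^p(X)}$, and \eqref{eqn:Euler} then gives the same bound for $(\bar\partial_\tau^\alpha u^n)$, completing the proof. The one structural step I anticipate as the true engine of the argument (and the point to emphasize) is the ``logarithmic'' identity $(1-\xi)\frac{d}{d\xi}\delta(\xi)^\alpha=-\alpha\,\delta(\xi)^\alpha$: it is precisely this feature of the BE symbol that lets the $(1-\xi)$ weight in Blunck's theorem cancel the derivative singularity and reduces the entire multiplier bound to the $R$-sectoriality of $A$. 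An analogue of this identity for the BDF2 symbol $\delta(\xi)=\frac{3}{2}-2\xi+\frac{1}{2}\xi^2$ will be needed to extend the same scheme of proof to BDF2, so getting this cancellation right here is the main step worth care.
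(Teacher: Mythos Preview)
Your proposal is correct and follows essentially the same route as the paper. The only cosmetic difference is that the paper takes $M(\xi)=zR(z;A)$ (the symbol for $(\bar\partial_\tau^\alpha u)(\xi)$) and then obtains the bound on $(Au^n)$ from \eqref{eqn:Euler}, whereas you take $M(\xi)=zR(z;A)-I$ (the symbol for $(Au)(\xi)$) and recover $(\bar\partial_\tau^\alpha u^n)$ afterwards; the derivative identity you isolate, $(1-\xi)\frac{d}{d\xi}\delta(\xi)^\alpha=-\alpha\,\delta(\xi)^\alpha$, is exactly the mechanism the paper uses (it writes $(1-\xi)M'(\xi)=-\alpha M(\xi)+\alpha M(\xi)^2$).
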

\begin{proof}$\,\,$
By multiplying both sides of \eqref{eqn:Euler} by $\xi^n$ and summing over $n$, we have
\begin{equation*}
  \sum_{n=1}^\infty \xi^n \bar\partial_\tau^\alpha u^n - \sum_{n=1}^\infty Au^n\xi^n = \sum_{n=1}^\infty f^n\xi^n.
\end{equation*}
It suffices to compute the term $\sum_{n=1}^\infty \xi^n \bar\partial_\tau^\alpha u^n$. By noting
$u^0=0$, the definition of the BE approximation \eqref{eqn:BE} and discrete convolution rule
\eqref{conv-rule}, we deduce
\begin{equation*}
  \begin{aligned}
  \sum_{n=1}^\infty \xi^n \bar\partial_\tau^\alpha u^n & = \tau^{-\alpha}\sum_{n=0}^\infty \xi^n \sum_{j=0}^nb_{n-j}u^j = \tau^{-\alpha}\Big(\sum_{n=0}^\infty u^n\xi^n\Big)\Big(\sum_{n=0}^\infty b_n\xi^n\Big)\\
  &=\tau^{-\alpha}\delta(\xi)^\alpha u(\xi).
  \end{aligned}
\end{equation*}
Consequently, upon letting $f^0=0$, we arrive at
\begin{equation*}
  (\tau^{-\alpha}\delta_\tau(\xi)^\alpha -A) u(\xi) = f(\xi).
\end{equation*}
Since $\tau^{-1}\delta(\xi)\in\Sigma_{\pi/2}$ for $\xi\in{\mathbb D}'$, we have
$\tau^{-\alpha}\delta(\xi)^\alpha\in\Sigma_{\alpha\pi/2}$ for $\xi\in{\mathbb D}'$.
The $R$-sectoriality of angle $\alpha\pi/2$ of the operator $A$
ensures that the operator $\tau^{-\alpha} \delta(\xi)^\alpha-A$ is invertible.
Meanwhile, the generating function of the BE approximation $\bar\partial_\tau^\alpha u$ is given by
\begin{align*}
(\bar\partial_\tau^\alpha u)(\xi) &= \sum_{n=0}^\infty \xi^n\bar\partial_\tau^\alpha u^n =\tau^{-\alpha}\delta(\xi)^\alpha u(\xi)
= M(\xi)f(\xi).
\end{align*}
with $M(\xi) =\tau^{-\alpha}\delta(\xi)^\alpha(\tau^{-\alpha}\delta(\xi)^\alpha-A)^{-1}$.
Appealing to the $R$-sectoriality of $A$ again gives that $zR(z;A)$ is analytic and $R$-bounded
in the sector $\Sigma_{\alpha\pi/2}$, which imply that $M(\xi)$ is differentiable and $R$-bounded
for $\xi\in{\mathbb D}'$. Direct computation yields
\begin{equation*}
  (1-\xi)M^\prime(\xi) = - \alpha M(\xi) + \alpha M(\xi)^2,
\end{equation*}
which together with Lemma \ref{lem:RB-calculus}  (iii)-(iv) implies the $R$-boundedness of the set \eqref{eqn:set-Blunck}.
Then the desired result follows from Theorem \ref{thm:blunk}.
\qed\end{proof}

\begin{remark}
The BE scheme \eqref{eqn:BE} is identical with the Gr\"{u}nwald-Letnikov formula, a popular difference
analogue of the Riemann-Liouville fractional derivative $\partial_t^\alpha u$ \cite{Podlubny:1999}, which has been customarily
employed for discretizing \eqref{eqn:fde}.
\end{remark}

\subsection{Second-order BDF scheme}
Next we consider the convolution quadrature generated by the second-order backward difference formula (BDF2)
for discretizing the model \eqref{eqn:fde}:
\begin{equation}\label{eqn:SBD}
  \bar\partial_\tau^\alpha u^n = Au^n + f^n,\quad n\geq 2
\end{equation}
where the BDF2 approximation $\bar\partial_\tau^\alpha u^n$ to $\partial_t^\alpha u(t_n)$, $t_n=n\tau$, is given by
\begin{equation}\label{eqn:SBD_dis}
  \bar\partial_\tau^\alpha u^n = \tau^{-\alpha}\sum_{j=0}^n b_{n-j}u^{j}\quad
\mbox{with } \sum_{j=0}^\infty b_j\xi^j=\delta(\xi)^\alpha,
\end{equation}
with the characteristic function $\delta(\xi)$
\begin{equation}\label{eqn:delta_SBD}
  \delta(\xi) = \tfrac{3}{2}-2\xi+\tfrac{1}{2}\xi^2.
\end{equation}
We approximate the fractional derivative $\partial_t^\alpha u(t_n)$ by the BDF2 convolution quadrature \eqref{eqn:SBD_dis}, and
consider the scheme \eqref{eqn:SBD} with zero starting values $u^0=u^1=0$.
Note that for the BDF2 scheme (and other higher-order
linear multistep methods), the initial steps have to be corrected properly in order to achieve the desired
accuracy \cite{CuestaLubichPalencia:2006,JinLazarovZhou:2016sisc}.
The next result gives the discrete maximal regularity of the scheme \eqref{eqn:SBD}.
\begin{theorem}\label{thm:SBD}
Let $X$ be a UMD space, $0<\alpha<1$ or $1<\alpha<2$, and let $A$ be an $R$-sectorial operator on $X$ of angle $\alpha\pi/2$.
Then the BDF2  scheme \eqref{eqn:SBD}
with a step size $\tau$ satisfies the following discrete maximal regularity
\begin{equation*}
  \|(\bar\partial_\tau^\alpha u^n)_{n=2}^N\|_{\ell^p(X)} +\|(Au^n)_{n=2}^N\|_{\ell^p(X)}\leq c_{p,X}c_R\|(f_n)_{n=2}^N\|_{\ell^p(X)},
\end{equation*}
where the constant $c_{p,X}$ is independent of $N$, $\tau$ and $A$,
and $c_R$ denotes the $R$-bound of the set of operators
$\{zR(z;A) :z\in \Sigma_{\alpha\pi/2}\}$.
\end{theorem}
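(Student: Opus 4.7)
The plan is to mimic the BE proof line-by-line: take the generating-function form of \eqref{eqn:SBD}, identify the operator-valued Fourier multiplier $M(\xi)$, and verify the two $R$-boundedness conditions of Theorem \ref{thm:blunk}. Setting $u^0=u^1=0$ and, if needed, $f^0=f^1=0$, the discrete convolution rule \eqref{conv-rule} applied to \eqref{eqn:SBD_dis} gives
\[
(\bar\partial_\tau^\alpha u)(\xi)=M(\xi)f(\xi),\qquad M(\xi):=\tau^{-\alpha}\delta(\xi)^\alpha\bigl(\tau^{-\alpha}\delta(\xi)^\alpha-A\bigr)^{-1},
\]
where now $\delta(\xi)=\tfrac{1}{2}(1-\xi)(3-\xi)$ is the BDF2 symbol \eqref{eqn:delta_SBD}. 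Relative to Theorem \ref{thm:Euler} only the scalar factor $\delta(\xi)$ changes, so everything hinges on analyzing this polynomial on $\mathbb{D}'$.

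The first task is to show $\{M(\xi):\xi\in\mathbb{D}'\}$ is $R$-bounded. As in the BE case this reduces to $\tau^{-\alpha}\delta(\xi)^\alpha\in\Sigma_{\alpha\pi/2}$, i.e.\ $|\arg\delta(\xi)|<\pi/2$. This is precisely the $A$-stability of BDF2, and I would verify it by the explicit computation, for $\xi=e^{-\mathrm{i}\theta}$ with $\theta\in(0,2\pi)\setminus\{\pi\}$,
\[
\operatorname{Re}\delta(\xi)=\tfrac{3}{2}-2\cos\theta+\tfrac{1}{2}\cos(2\theta)=(1-\cos\theta)^2>0,
\]
which is strictly positive on $\mathbb{D}'$ (and $\delta(-1)=4$). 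Hence $\delta(\mathbb{D}')\subset\Sigma_{\pi/2}$, and the $R$-sectoriality of $A$ of angle $\alpha\pi/2$ yields $R(\{M(\xi):\xi\in\mathbb{D}'\})\le c_R$.

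For the second Blunck condition I would use $M=I+A(z-A)^{-1}$ with $z=\tau^{-\alpha}\delta(\xi)^\alpha$; differentiating gives, exactly as for BE,
\[
M'(\xi)=\alpha\,\frac{\delta'(\xi)}{\delta(\xi)}\bigl(M(\xi)-M(\xi)^2\bigr).
\]
The partial-fraction expansion
\[
\frac{\delta'(\xi)}{\delta(\xi)}=\frac{\xi-2}{\tfrac12(1-\xi)(3-\xi)}=-\frac{1}{1-\xi}-\frac{1}{3-\xi}
\]
makes the required cancellation transparent: multiplying by $(1-\xi)(1+\xi)$ removes the pole at $\xi=1$ produced by the BE-like first term, the second term is analytic on $\overline{\mathbb{D}}$ (its pole lies at $\xi=3$), and no new singularity appears at $\xi=-1$. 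Thus the scalar $(1-\xi)(1+\xi)\delta'(\xi)/\delta(\xi)$ is uniformly bounded on $\mathbb{D}'$, and Lemma \ref{lem:RB-calculus}(iii)--(iv) applied to $M(\xi)-M(\xi)^2$ gives $R$-boundedness of $\{(1-\xi)(1+\xi)M'(\xi):\xi\in\mathbb{D}'\}$ with bound $\lesssim c_R$. Theorem \ref{thm:blunk} then delivers the stated estimate for $\bar\partial_\tau^\alpha u^n$, and the estimate for $Au^n$ follows by the triangle inequality applied to $Au^n=\bar\partial_\tau^\alpha u^n-f^n$.

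The main obstacle, and the step that is genuinely BDF2-specific, is the sectoriality $\delta(\mathbb{D}')\subset\Sigma_{\pi/2}$; higher-order BDF generators fail exactly this test because they are not $A$-stable, so an analogous theorem for BDF$k$ with $k\ge 3$ would require either $R$-sectoriality of $A$ of a strictly smaller angle or a separate Lemma~\ref{lem:Blunk}-type extension argument. Once the $A$-stability of BDF2 is in hand, the rest of the proof is essentially the same algebra as in Theorem \ref{thm:Euler}.
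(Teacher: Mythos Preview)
Your proposal is correct and follows essentially the same approach as the paper: identify the multiplier $M(\xi)=\tau^{-\alpha}\delta(\xi)^\alpha(\tau^{-\alpha}\delta(\xi)^\alpha-A)^{-1}$, use the $A$-stability of BDF2 to place $\delta(\mathbb{D}')$ in the right half-plane, and show that $(1-\xi)(1+\xi)\delta'(\xi)/\delta(\xi)$ is bounded on $\mathbb{D}'$. The paper records this last quantity as $(1-\xi)M'(\xi)=d(\xi)(M(\xi)-M(\xi)^2)$ with $d(\xi)=\alpha\tfrac{2(\xi-2)}{3-\xi}$, which is exactly your partial-fraction expression multiplied through by $1-\xi$; your explicit computation $\operatorname{Re}\delta(e^{-\mathrm{i}\theta})=(1-\cos\theta)^2$ and the partial-fraction decomposition are nice clarifications but not substantively different.
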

\begin{proof}$\,\,$
In a straightforward manner, upon letting $f^0=f^1=0$, we obtain
\begin{equation*}
  (\tau^{-\alpha}\delta(\xi)^\alpha-A)u(\xi) = f(\xi),
\end{equation*}
where $\delta(\xi)$ is defined in \eqref{eqn:delta_SBD}. Since BDF2 is $A$-stable (for ODEs), i.e.,
$\Re\delta(\xi)> 0$ for $\xi\in\mathbb{D}^\prime$, we have
$\tau^{-\alpha}\delta(\xi)^\alpha\subset \Sigma_{\alpha\pi/2}$. This
and the $R$-sectoriality (of angle $\alpha\pi/2$)
of the operator $A$ implies that $\tau^{-\alpha}\delta(\xi)^\alpha-A$ is invertible for $\xi\in \mathbb{D}'$.
Further, direct computation gives
\begin{equation*}
  (\bar\partial_\tau^\alpha u)(\xi)=M(\xi)f(\xi)\quad \mbox{with } M(\xi) = \tau^{-\alpha}\delta(\xi)^\alpha(\tau^{-\alpha}\delta(\xi)^\alpha-A)^{-1}.
\end{equation*}
The $R$-sectoriality of the operator $A$ implies the $R$-boundedness of the
set $\{M(\xi): \xi\in\mathbb{D}^\prime\}$. Meanwhile,
\begin{equation*}
  \begin{aligned}
    (1-\xi)M^\prime(\xi) 
     & = d(\xi) M(\xi) - d(\xi) M(\xi)^2,\quad \mbox{with $d(\xi)=\alpha \frac{2(\xi-2)}{3-\xi}$.}
  \end{aligned}
\end{equation*}
Since $d(\xi)$ is bounded on $\mathbb{D}'$, Lemma
\ref{lem:RB-calculus} (iii)-(iv) and Theorem \ref{thm:blunk} give the desired assertion.
\qed\end{proof}

\section{L1 scheme}
Now we discuss one time-stepping scheme  of finite difference type for simulating subdiffusion
-- the L1 scheme \cite{LinXu:2007,SunWu:2006} -- which is easy to implement and converges
robustly for nonsmooth data, hence very popular. However, unlike convolution quadrature,
finite difference type methods are generally challenging to analyze.
For the subdiffusion case, i.e., $\alpha\in(0,1)$, it approximates the (Caputo) fractional derivative $\partial_t^\alpha u(t_n)$
with a time step size $\tau$ by
\begin{equation}\label{eqn:L1approx}
   \begin{aligned}
     \partial_t^\alpha u(t_n) &= \frac{1}{\Gamma(1-\alpha)}\sum^{n-1}_{j=0}\int^{t_{j+1}}_{t_j}
        u^\prime(s)(t_n-s)^{-\alpha}\, ds \\
     &\approx \frac{1}{\Gamma(1-\alpha)}\sum^{n-1}_{j=0} \frac{u(t_{j+1})-u(t_j)}{\tau}\int_{t_j}^{t_{j+1}}(t_n-s)^{-\alpha}ds \\
     &=\sum_{j=0}^{n-1}b_j\frac{u(t_{n-j})-u(t_{n-j-1})}{\tau^\alpha}\\
     &=\tau^{-\alpha} [b_0u(t_n)-b_{n-1}u(t_0)+\sum_{j=1}^{n-1}(b_j-b_{j-1})u(t_{n-j})]
     =:\bar \partial_\tau^\alpha u^n.
   \end{aligned}
 \end{equation}
where the weights $b_j$ are given by
\begin{equation}\label{eqn:bj}
b_j=((j+1)^{1-\alpha}-j^{1-\alpha})/\Gamma(2-\alpha),\ j=0,1,\ldots,N-1.
\end{equation}

For the case $\alpha\in(1,2)$, the L1--approximation reads \cite{SunWu:2006}
\begin{align*}
\partial_t^\alpha u(t_{n-\frac12})
&\approx  \frac{\tau^{-\alpha}}{\Gamma(3-\alpha)} \Big[a_0\delta_t u^{n-\frac12}
   - \sum_{j=1}^{n-1}(a_{n-j-1}-a_{n-j})\delta_t u^{j-\frac12}-a_{n-1}\tau \partial_tu(0) \Big] \\
&=:\bar\partial_\tau^\alpha u^{n},
\end{align*}
where $\delta_tu^{j-\frac12}=u^{j}-u^{j-1}$ denotes central difference, and
$a_j=(j+1)^{2-\alpha}-j^{2-\alpha}$, and we have abused the notation
$\bar\partial_\tau^\alpha u^n$ for approximating $\partial_t^\alpha u(t_{n-\frac12})$.
Formally, it can be obtained by applying
\eqref{eqn:L1approx} to the first derivative $\partial_t u$, in view of the identity
$\partial_t^\alpha u = \partial_t^{\alpha-1} (\partial_t u)$, and then discretizing the
$\partial_tu$ with the Crank-Nicolson type method. The scheme requires $\partial_tu(0)$,
in addition to the initial condition $u(0)$.
Accordingly, we approximate the right hand
side of \eqref{eqn:fde} by a Crank-Nicolson type scheme.
In sum, the L1 scheme reads
\begin{equation}\label{eqn:L1}
  \left\{\begin{aligned}
    \bar\partial_\tau^\alpha u^n &= Au^n + f^n, & 0<\alpha <1,\\
    \bar\partial_\tau^{\alpha}u^{n} &= A(u^n+u^{n-1})/2 + (f^n+f^{n-1})/2, & 1<\alpha <2.
  \end{aligned}\right.
\end{equation}
\begin{remark}
For $\alpha\in(0,1)$, Lin and Xu \cite{LinXu:2007} proved that the L1 scheme
is uniformly $O(\tau^{2-\alpha})$ accurate for $C^2$ solutions; and for
$\alpha\in (1,2)$, Sun and Wu \cite{SunWu:2006} showed that it is uniformly
$O(\tau^{3-\alpha})$ accurate for $C^3$ solutions. It is worth noting that even for
smooth initial data and source term, the solution of fractional-order PDEs may not be
$C^2$ in general. In fact, the L1 scheme is generally only first-order
\cite{JinLazarovZhou:2016ima,JinZhou:2016}.
\end{remark}

For the analysis, we recall the polylogarithmic function $\mathrm{Li}_p(z)$, $p\in\mathbb{R}$
and $z\in\mathbb{C}$, defined by
\begin{equation*}
\mathrm{Li}_p(z)=\sum_{j=1}^\infty \frac{z^j}{j^p}.
\end{equation*}
The function $\mathrm{Li}_p(z)$ is well defined on $\{z:|z|<1\}$, and it can be analytically continued
to the split complex plane $\mathbb{C}\setminus [1,\infty)$ \cite{Flajolet:1999}. With $z=1$,
it recovers the Riemann zeta function $\zeta(p)=\mathrm{Li}_p(1)$. First we state the solution representation.

\begin{lemma}\label{lem:L1-repr}
The discrete solution $u(\xi)$ of the L1 scheme \eqref{eqn:L1} satisfies
\begin{equation}\label{eqn:delta_L1}
  (\tau^{-\alpha}\delta(\xi)-A)u(\xi)= f(\xi),
\end{equation}
with the generating functions
\begin{align*}
&\delta(\xi)= \left\{\begin{array}{ll}\displaystyle
  \frac{(1-\xi)^2}{\xi\Gamma(2-\alpha)}\mathrm{Li}_{\alpha-1}(\xi), &\quad\alpha\in(0,1),\\[1.2ex]
  \displaystyle \frac{2(1-\xi)^3}{\xi(1+\xi)\Gamma(3-\alpha)}\mathrm{Li}_{\alpha-2}(\xi), &\quad\alpha\in(1,2),
\end{array}\right. \\[10pt]
&f(\xi)= \left\{\begin{aligned}
\sum_{n=1}^\infty f^n\xi^n,\quad \alpha\in(0,1),\\
\frac{\xi}{1+\xi}\sum_{n=0}^\infty f^n\xi^n + \frac{1}{1+\xi}\sum_{n=1}^\infty f^n\xi^n, \quad\alpha\in(1,2).
\end{aligned}\right.
\end{align*}
\end{lemma}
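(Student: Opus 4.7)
The plan is to apply the discrete convolution rule \eqref{conv-rule} to both sides of the L1 scheme \eqref{eqn:L1} and to identify the resulting coefficient series as polylogarithms. The one algebraic identity I will use repeatedly is the Abel-type summation
\[
\sum_{k=0}^\infty (c_k - c_{k-1})\xi^k = (1-\xi)\sum_{k=0}^\infty c_k \xi^k \qquad (c_{-1}:=0),
\]
together with the defining relation $\sum_{j=1}^\infty j^{-p}\xi^j = \mathrm{Li}_p(\xi)$ and the reindexing $\sum_{k=0}^\infty (k+1)^{-p}\xi^k = \xi^{-1}\mathrm{Li}_p(\xi)$.

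For $\alpha \in (0,1)$, using $u^0 = 0$ the boundary term $-b_{n-1}u(t_0)$ in \eqref{eqn:L1approx} vanishes and the L1 approximation takes the one-sided convolution form $\bar\partial_\tau^\alpha u^n = \tau^{-\alpha}\sum_{k=0}^{n} d_k u^{n-k}$ with $d_0 = b_0$ and $d_k = b_k - b_{k-1}$ for $k\geq 1$. The Abel identity gives $d(\xi) = (1-\xi)b(\xi)$ where $b(\xi)=\sum_k b_k\xi^k$, and substituting \eqref{eqn:bj} produces $\Gamma(2-\alpha)\,b(\xi) = (\xi^{-1}-1)\mathrm{Li}_{\alpha-1}(\xi)$, whence $d(\xi)=\frac{(1-\xi)^2}{\xi\Gamma(2-\alpha)}\mathrm{Li}_{\alpha-1}(\xi)$. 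Multiplying the scheme by $\xi^n$ and summing over $n\geq 1$ (noting $\bar\partial_\tau^\alpha u^0 = 0$) then yields \eqref{eqn:delta_L1} with this $\delta(\xi)$ and $f(\xi) = \sum_{n\geq 1}f^n\xi^n$.

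For $\alpha \in (1,2)$, with $\partial_t u(0) = 0$ and $u^0 = 0$ an analogous reindexing converts the L1 formula into the doubly-telescoped form $\Gamma(3-\alpha)\tau^\alpha \bar\partial_\tau^\alpha u^n = \sum_{k=0}^{n-1}\hat a_k(u^{n-k}-u^{n-k-1})$ with $\hat a_0 = a_0$ and $\hat a_k = a_k - a_{k-1}$ for $k\geq 1$. Since the generating function of the increment sequence $(u^n - u^{n-1})$ is $(1-\xi)u(\xi)$ and a second application of the Abel identity combined with $a_k=(k+1)^{2-\alpha}-k^{2-\alpha}$ gives $\hat a(\xi) = \frac{(1-\xi)^2}{\xi}\mathrm{Li}_{\alpha-2}(\xi)$, the convolution rule yields
\[
\sum_{n\geq 0} \bar\partial_\tau^\alpha u^n\,\xi^n = \frac{\tau^{-\alpha}(1-\xi)^3}{\xi\,\Gamma(3-\alpha)}\mathrm{Li}_{\alpha-2}(\xi)\,u(\xi).
\]
Multiplying the Crank--Nicolson-type scheme \eqref{eqn:L1} by $2\xi^n$ and summing produces $A(1+\xi)u(\xi)$ and $\sum_{n\geq 1}(f^n+f^{n-1})\xi^n$ on the two sides; dividing by $(1+\xi)$ gives the stated $\delta(\xi)$, and the rewriting $\sum_{n\geq 1}(f^n+f^{n-1})\xi^n = \xi\sum_{m\geq 0} f^m\xi^m + \sum_{n\geq 1} f^n\xi^n$ gives the stated $f(\xi)$.

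The main obstacle is the bookkeeping in converting the piecewise explicit formulas for $\bar\partial_\tau^\alpha u^n$, with their ``boundary'' contributions involving $u(t_0)$ and $\partial_t u(0)$, into clean one-sided convolutions, and then recognising the resulting series $\sum_k(k+1)^{1-\alpha}\xi^k$ and $\sum_k(k+1)^{2-\alpha}\xi^k$ as shifted polylogarithms. Once this is handled, everything else reduces to the Abel identity, the convolution rule \eqref{conv-rule}, and (in the $\alpha\in(1,2)$ case) division by $(1+\xi)$ to absorb the Crank--Nicolson averaging symmetrically on both sides.
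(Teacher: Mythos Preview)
Your proof is correct and follows essentially the same approach as the paper: multiply the scheme by $\xi^n$, sum, apply the convolution rule, and identify the weight series via the Abel telescoping identity and the reindexing $\sum_{k\ge0}(k+1)^{-p}\xi^k=\xi^{-1}\mathrm{Li}_p(\xi)$. The paper only writes out the case $\alpha\in(0,1)$ and declares the case $\alpha\in(1,2)$ ``analogous''; your treatment of the latter (double telescoping on the $a_k$, the factor $(1-\xi)$ from the increments $u^n-u^{n-1}$, and division by $(1+\xi)$ to absorb the Crank--Nicolson averaging) is exactly the intended analogue.
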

\begin{proof}$\,\,$
We first show the representation for $\alpha\in(0,1)$,
and the case $\alpha\in(1,2)$ is analogous.
Multiplying both sides of \eqref{eqn:L1} by $\xi^n$ and summing over $n$ yield
\begin{equation*}
  \sum_{n=1}^\infty \bar\partial_\tau^\alpha u^n\xi^n - A  u(\xi) = \sum_{n=1}^\infty f^n\xi^n,
\end{equation*}
upon noting $u^0=0$.
Now we focus on the term $\sum_{n=1}^\infty \bar\partial_\tau^\alpha u^n\xi^n$.
Since $u^0=0$, by the convolution rule \eqref{conv-rule}, we have
\begin{equation*}
\begin{split}
  \sum_{n=1}^\infty \bar\partial_\tau^\alpha u^n\xi^n & = \tau^{-\alpha}\sum_{n=1}^\infty \Big(b_0u^n+\sum_{j=1}^{n-1}(b_j-b_{j-1})u^{n-j}\Big) \xi^n\\
    & = \tau^{-\alpha} \sum_{n=1}^\infty \Big(\sum_{j=0}^{n-1} b_j u^{n-j}\Big)\xi^n
    - \tau^{-\alpha}\sum_{n=1}^\infty \Big(\sum_{j=1}^{n-1} b_{j-1} u^{n-j}\Big)\xi^n \\[5pt]
    & = \tau^{-\alpha}(1-\xi) b(\xi) u (\xi).
\end{split}
\end{equation*}
Using the polylogarithmic function $\mathrm{Li}_p(z)$, $b(\xi)$ is given by
\begin{align*}
  b(\xi) &= \frac{1}{\Gamma(2-\alpha)}\sum_{{j=0}}^\infty (( j+1 )^{1-\alpha}-j^{1-\alpha}) \xi^j\\
    &= \frac{1-\xi}{\xi\Gamma(2-\alpha)} \sum_{j=1}^\infty j^{1-\alpha} \xi^j= \frac{(1-\xi)\mathrm{Li}_{\alpha-1}(\xi)}{\xi\Gamma(2-\alpha)},
\end{align*}
from which the desired solution representation follows directly.
\qed\end{proof}

We shall need the following result, which is of independent interest.
\begin{lemma}\label{lem:L1-angle1}
For $\alpha\in(0,1)$ and $\xi \in \mathbb{D}'$, we have $\psi(\xi) := \frac{(1-\xi)^2}{\xi}\mathrm{Li}_{\alpha-1}(\xi)\in \Sigma_{\frac{\pi\alpha}{2}}$.
\end{lemma}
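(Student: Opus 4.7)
The plan is to rewrite $\psi(\xi)$ on the unit circle as an explicit $\mathbb{R}_{>0}$-linear combination of $e^{i\alpha\pi/2}$ and $e^{-i\alpha\pi/2}$, via Hurwitz's functional equation for the polylogarithm; once this is done, the sector membership drops out of a one-line trigonometric inequality.

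First I would parametrize the unit circle by $\xi=e^{-i\theta}$, which places $\theta$ in $(0,2\pi)\setminus\{\pi\}$ precisely when $\xi\in\mathbb{D}'$. Using the elementary identity $1-e^{-i\theta}=2\sin(\theta/2)\,i\,e^{-i\theta/2}$, the prefactor simplifies to
\[
\frac{(1-\xi)^2}{\xi}=-4\sin^2(\theta/2)\in\mathbb{R}_{<0},
\]
so $\psi(\xi)$ is a negative real multiple of $\mathrm{Li}_{\alpha-1}(e^{-i\theta})$, and the task reduces to locating the argument of the polylogarithm.

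Next, for $s=\alpha-1\in(-1,0)$ and $a\in(0,1)$ I would apply the Hurwitz formula
\[
\mathrm{Li}_s(e^{2\pi ia})=\Gamma(1-s)(2\pi)^{s-1}\bigl[e^{i\pi(1-s)/2}\zeta(1-s,a)+e^{-i\pi(1-s)/2}\zeta(1-s,1-a)\bigr],
\]
with $\zeta(\cdot,\cdot)$ the Hurwitz zeta function. Taking $a=1-\theta/(2\pi)$ (so that $e^{2\pi ia}=e^{-i\theta}$), using the principal-branch identities $e^{\pm i\pi(2-\alpha)/2}=-e^{\mp i\alpha\pi/2}$, and combining the two minus signs with the negative real prefactor above, I arrive at
\[
\psi(\xi)=K(\theta,\alpha)\bigl[u\,e^{i\alpha\pi/2}+v\,e^{-i\alpha\pi/2}\bigr],
\]
where $K(\theta,\alpha)=4\sin^2(\theta/2)\,\Gamma(2-\alpha)/(2\pi)^{2-\alpha}>0$ and $u=\zeta(2-\alpha,\theta/(2\pi))$, $v=\zeta(2-\alpha,1-\theta/(2\pi))$ are both strictly positive reals: indeed $2-\alpha>1$, and for any $y\in(0,1)$ the Hurwitz series $\sum_{k\ge 0}(k+y)^{-(2-\alpha)}$ converges absolutely with positive terms.

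Finally, since $0<\alpha\pi/2<\pi/2$, the bracket has real part $(u+v)\cos(\alpha\pi/2)>0$ and imaginary part $(u-v)\sin(\alpha\pi/2)$, whence
\[
\bigl|\tan(\arg\psi(\xi))\bigr|=\frac{|u-v|}{u+v}\tan(\alpha\pi/2)<\tan(\alpha\pi/2),
\]
giving $|\arg\psi(\xi)|<\alpha\pi/2$, i.e.\ $\psi(\xi)\in\Sigma_{\alpha\pi/2}$, as required. The only substantive input is the classical Hurwitz formula; the main pitfall is careful branch bookkeeping in $e^{\pm i\pi(2-\alpha)/2}$, because it is precisely the alignment of the two terms at angles $\pm\alpha\pi/2$ (rather than at the supplementary $\pm(\pi-\alpha\pi/2)$, where the conclusion would fail) that turns the trivial inequality $|u-v|/(u+v)<1$ into the desired bound on $\arg\psi$.
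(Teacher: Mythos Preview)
Your proof is correct and follows essentially the same route as the paper's: both observe that $(1-\xi)^2/\xi$ is negative real on $\mathbb{D}'$, invoke the same classical expansion of $\mathrm{Li}_{\alpha-1}$ on the unit circle (the paper cites it as \cite[equation (13.1)]{wood1992computation}, you call it the Hurwitz formula---they coincide, with the paper's $A(\theta),B(\theta)$ equal to your $u,v$), and deduce sector membership from $|u-v|/(u+v)<1$. The only cosmetic difference is that you treat all $\theta\in(0,2\pi)\setminus\{\pi\}$ at once via the tangent inequality, whereas the paper handles $\theta\in(0,\pi]$ and covers the other half by symmetry.
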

\begin{proof}$\,\,$
It suffice to consider $\xi=e^{-\mathrm{i}\theta}$ with $\theta\in(0,\pi]$,
since the case $\theta\in(\pi,2\pi)$ can be proved similarly. Using the identity
\begin{equation*}
    \frac{(1-\xi)^2}{\xi} = \frac1{\xi}+\xi-2= e^{-\mathrm{i}\theta}+e^{\mathrm{i}\theta}-2 = 2\cos \theta -2,
\end{equation*}
we have
\begin{equation*}
    \arg( {(1-\xi)^2}/{\xi}) = \arg(2\cos \theta -2) = -\pi.
\end{equation*}
Moreover, we have the expansion \cite[equation (13.1)]{wood1992computation}
\begin{align}\label{Exp-Li-alpha-1}
&\frac{\mathrm{Li}_{\alpha-1}(\xi)}{\Gamma(2-\alpha)} \\
&=(-2\pi\mathrm{i})^{\alpha-2}\sum_{k=0}^\infty\left(k+1-\tfrac{\theta}{2\pi}\right)^{\alpha-2}  +
      (2\pi\mathrm{i})^{\alpha-2}\sum_{k=0}^\infty\left(k+\tfrac{\theta}{2\pi}\right)^{\alpha-2} \nonumber \\
        & =(2\pi)^{\alpha-2}
        \left(\cos((2-\alpha)\tfrac\pi2)(A(\theta)+B(\theta)) - {\mathrm i}\sin((2-\alpha)\tfrac\pi2) (A(\theta)-B(\theta)) \right),\nonumber
\end{align}
where
\begin{equation*}
    A(\theta)=\sum_{k=0}^\infty\left(k+\tfrac{\theta}{2\pi}\right)^{\alpha-2} \quad \text{and}\quad
    B(\theta)=\sum_{k=0}^\infty\left(k+1-\tfrac{\theta}{2\pi}\right)^{\alpha-2}  .
\end{equation*}
Both series converge for $\alpha\in(0,1)$.
Since for $\theta\in(0,\pi]$, $(k+\tfrac{\theta}{2\pi} )^{\alpha-2} > (k+1-\tfrac{\theta}{2\pi} )^{\alpha-2}>0$, there holds
\begin{equation*}
    \frac{A(\theta)-B(\theta)}{A(\theta)+B(\theta)} \in (0,1),
\end{equation*}
and we deduce
\begin{equation*}
    \arg( \mathrm{Li}_{\alpha-1}(\xi) ) \in [-\pi,-\pi+\alpha\pi/2)\quad \text{for}~~ \xi=e^{-\mathrm{i}\theta},~\theta\in(0,\pi].
\end{equation*}
Therefore, we have
\begin{equation*}
    \arg\Big(\frac{(1-\xi)^2}{\xi}\mathrm{Li}_{\alpha-1}(\xi) \Big)=\arg( \mathrm{Li}_{\alpha-1}(\xi) )
    +\arg( {(1-\xi)^2}/{\xi})\in[0,\alpha\pi/2) .
\end{equation*}
This completes the proof of the lemma.
\qed\end{proof}

\begin{lemma}\label{lem:delta-bdd}
For the function $\delta(\xi)$ defined by \eqref{eqn:delta_L1}, there holds
\begin{equation*}
 (1-\xi)(1+\xi)\delta^\prime(\xi) = d(\xi) \delta(\xi)
\end{equation*}
with
\begin{equation*}
    d(\xi)= \left\{\begin{array}{ll}\displaystyle
  (1+\xi)\left(-2+\frac{1-\xi}{\xi}\frac{\mathrm{Li}_{\alpha-2}(\xi)-\mathrm{Li}_{\alpha-1}(\xi)}
  {\mathrm{Li}_{\alpha-1}(\xi)}\right), &\quad\alpha\in(0,1),\\[1.2ex]
  \displaystyle (1+\xi)\left(-3+\frac{1-\xi}{\xi}\frac{\mathrm{Li}_{\alpha-3}(\xi)-\mathrm{Li}_{\alpha-2}(\xi)}
  {\mathrm{Li}_{\alpha-2}(\xi)}\right) + (\xi-1), &\quad\alpha\in(1,2).
\end{array}\right.
\end{equation*}
where $d(\xi)$ is uniformly bounded on ${\mathbb D}'$.
\end{lemma}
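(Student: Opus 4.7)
The plan is to derive the claimed differential identity by a direct logarithmic differentiation of $\delta(\xi)$, and then to control $d(\xi)$ on $\mathbb{D}'$ by combining a continuity argument away from $\xi=1$ with the singular expansion of $\mathrm{Li}_p$ at $\xi=1$ to resolve the cancellations there.

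First I would use the elementary identity $\xi\,\mathrm{Li}_p'(\xi)=\mathrm{Li}_{p-1}(\xi)$, which follows by termwise differentiation of the defining series and extends to $\mathbb{C}\setminus[1,\infty)$ by analytic continuation. Applying $(\log\delta)'$ to \eqref{eqn:delta_L1} then yields, for $\alpha\in(0,1)$,
\begin{equation*}
\frac{\delta'(\xi)}{\delta(\xi)} = -\frac{2}{1-\xi} - \frac{1}{\xi} + \frac{\mathrm{Li}_{\alpha-2}(\xi)}{\xi\,\mathrm{Li}_{\alpha-1}(\xi)},
\end{equation*}
and an analogous expression for $\alpha\in(1,2)$ carrying an additional $-1/(1+\xi)$ contribution and with the indices shifted by one. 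Multiplying through by $(1-\xi)(1+\xi)$ and regrouping reproduces the two closed-form expressions for $d(\xi)$; in the $\alpha\in(1,2)$ case the extra summand $(\xi-1)$ comes precisely from $-(1-\xi)(1+\xi)/(1+\xi)=\xi-1$.

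For boundedness, I would split $\mathbb{D}'$ into a small arc around $\xi=1$ and its complement. On the complement the polylogarithms are holomorphic on $\mathbb{C}\setminus[1,\infty)$, and Lemma \ref{lem:L1-angle1} already guarantees $\mathrm{Li}_{\alpha-1}(\xi)\neq 0$ for $\alpha\in(0,1)$ on $\mathbb{D}'$ (with a parallel argument-range computation for $\mathrm{Li}_{\alpha-2}$ in the other case, based on the same expansion \eqref{Exp-Li-alpha-1} adapted to index $\alpha-2$); hence $d(\xi)$ is continuous on a compact set and therefore bounded there. The factor $(1+\xi)$ multiplying the bracketed term also suppresses any potential singularity as $\xi\to-1$, leaving only the benign summand $\xi-1$ in the $\alpha\in(1,2)$ case.

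The main obstacle is the behaviour as $\xi\to 1$, where both the numerator and denominator in $d(\xi)$ carry branch-type singularities that must cancel. Writing $\xi=e^\mu$ with $\mu\to 0$ and extracting the leading term of \eqref{Exp-Li-alpha-1},
\begin{equation*}
\mathrm{Li}_s(e^\mu) = \Gamma(1-s)(-\mu)^{s-1}+O(1) \quad \text{for } s\notin\{1,2,\dots\},
\end{equation*}
one obtains $\mathrm{Li}_{s-1}(\xi)/\mathrm{Li}_s(\xi)\sim (1-s)/(-\mu)$ while $(1-\xi)/\xi\sim -\mu$. Specialising to $s=\alpha-1$ and $s=\alpha-2$ shows that the potentially singular combination $\frac{1-\xi}{\xi}\frac{\mathrm{Li}_{s-1}-\mathrm{Li}_s}{\mathrm{Li}_s}$ converges to the finite value $1-s$, so $d(\xi)\to -2\alpha$ as $\xi\to 1$ in both cases, which closes the boundedness argument.
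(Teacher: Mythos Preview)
Your proposal is correct and follows essentially the same route as the paper: the derivative identity via $\xi\,\mathrm{Li}_p'(\xi)=\mathrm{Li}_{p-1}(\xi)$ (you package it as a logarithmic derivative, the paper differentiates directly), then boundedness by combining the singular expansion of $\mathrm{Li}_p$ near $\xi=1$ with a lower bound on $|\mathrm{Li}_{\alpha-1}(\xi)|$ (respectively $|\mathrm{Li}_{\alpha-2}(\xi)|$) away from $\xi=1$. Your explicit limit $d(\xi)\to-2\alpha$ and your appeal to Lemma~\ref{lem:L1-angle1} for the non-vanishing of $\mathrm{Li}_{\alpha-1}$ are slight refinements over the paper's presentation, but the strategy is the same.
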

\begin{proof}$\,\,$
It suffices to consider the case $\alpha\in(0,1)$, while the other case follows analogously.
Since $\mathrm{Li}_{\alpha-1}(\xi)$ is analytic, by termwise differentiation,
$\mathrm{Li}^\prime_{\alpha-1}(\xi)=\xi^{-1}\mathrm{Li}_{\alpha-2}(\xi)$. Thus, with $c_\alpha=1/\Gamma(2-\alpha)$, we have
\begin{equation*}
    \delta^\prime(\xi) = c_\alpha\Big({-}\frac{2(1-\xi)}{\xi}\mathrm{Li}_{\alpha-1}(\xi) -\frac{(1-\xi)^2}{\xi^2}\mathrm{Li}_{\alpha-1}(\xi)
    +\frac{(1-\xi)^2}{\xi^2}\mathrm{Li}_{\alpha-2}(\xi)\Big),
\end{equation*}
from which the expression of $d(\xi)$ follows.
By using the asymptotic expansion (see \cite[equation (9.3)]{wood1992computation}
or \cite[Theorem 1]{Flajolet:1999})
\begin{align}\label{L1-asymp1}
\mathrm{Li}_{p}(e^{-\mathrm{i}\theta})=\Gamma(1-p)(\mathrm{i}\theta)^{p-1} + o(\theta^{p}) , \quad\mbox{as } \theta\to0,
\end{align}
we deduce
\begin{equation*}
  \lim_{\begin{subarray}{ll}\xi\to 1\\
  \xi\in{\mathbb D}'
  \end{subarray}}\frac{1-\xi}{\xi}\frac{\mathrm{Li}_{\alpha-2}(\xi)-\mathrm{Li}_{\alpha-1}(\xi)}{\mathrm{Li}_{\alpha-1}(\xi)}
  =\frac{\Gamma(3-\alpha)}{\Gamma(2-\alpha)} = 2-\alpha  .
\end{equation*}
Hence, $d(\xi)$ is bounded if
$\xi=e^{-{\mathrm i}\theta}$ is close to $1$. Meanwhile,
if $\xi=e^{-{\mathrm i}\theta}$ and $\theta$ is away from the two end-points of the interval $(0,2\pi)$,  then
\eqref{Exp-Li-alpha-1} implies that $|{\rm Li}_{\alpha-1}(\xi)|$ has a positive lower bound and $|{\rm Li}_{\alpha-2}(\xi)|$ has an upper bound. Hence
$d(\xi)$ is bounded.
\qed\end{proof}

Now we can give the discrete maximal regularity for the L1 scheme \eqref{eqn:L1approx}.
\begin{theorem}\label{thm:L1}
Let $X$ be a UMD space, $0<\alpha<1$ or $1<\alpha<2$, and let $A$ be an $R$-sectorial operator on $X$ of angle $\alpha\pi/2$. Then the L1 scheme \eqref{eqn:L1}
satisfies the following discrete maximal regularity
\begin{align*}
&\|(\bar\partial_\tau^\alpha u^n)_{n=1}^N\|_{\ell^p(X)} + \|(Au^n)_{n=1}^N\|_{\ell^p(X)}
\leq
  \left\{
  \begin{aligned}
c_{p,X}c_R\|(f^n)_{n=1}^N\|_{\ell^p(X)},\;\mbox{if}\; 0<\alpha<1,\\
c_{p,X}c_R\|(f^n)_{n=0}^N\|_{\ell^p(X)},\;\mbox{if}\; 1<\alpha<2,
  \end{aligned}
  \right.
\end{align*}
where the constant $c_{p,X}$ is independent of $N$, $\tau$ and $A$,
and $c_R$ denotes the $R$-bound of the set of operators
$\{zR(z;A) :z\in \Sigma_{\alpha\pi/2}\}$.
\end{theorem}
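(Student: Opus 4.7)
The strategy follows the same template as the proofs of Theorems \ref{thm:Euler} and \ref{thm:SBD}: recast the discrete equation as a Fourier multiplier and then invoke Blunck's theorem (Theorem \ref{thm:blunk}). Starting from Lemma \ref{lem:L1-repr}, I would set $M(\xi) := \tau^{-\alpha}\delta(\xi)(\tau^{-\alpha}\delta(\xi)-A)^{-1}$, which gives $(\bar\partial_\tau^\alpha u)(\xi) = M(\xi) f(\xi)$. For $\alpha\in(1,2)$, the right-hand side $f(\xi)$ from Lemma \ref{lem:L1-repr} is a sum of two pieces multiplied by $\xi/(1+\xi)$ and $1/(1+\xi)$; these factors are uniformly bounded on $\mathbb{D}'$ (with bounded logarithmic derivative after multiplication by $(1-\xi)(1+\xi)$), so by Theorem \ref{thm:blunk} they induce bounded operators on $\ell^p(X)$, and it suffices to analyze $M(\xi)$.

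The key point is to check that $\tau^{-\alpha}\delta(\xi)$ lies in the sector $\Sigma_{\alpha\pi/2}$ for $\xi\in\mathbb{D}'$, so that the resolvent $(\tau^{-\alpha}\delta(\xi)-A)^{-1}$ is defined and $M(\xi)$ inherits the $R$-boundedness of $\{z R(z;A): z\in\Sigma_{\alpha\pi/2}\}$ with the same $R$-bound $c_R$. For $\alpha\in(0,1)$, this is exactly Lemma \ref{lem:L1-angle1}. For $\alpha\in(1,2)$, I would establish an analogous inclusion $\frac{2(1-\xi)^3}{\xi(1+\xi)\Gamma(3-\alpha)}\mathrm{Li}_{\alpha-2}(\xi)\in\Sigma_{\alpha\pi/2}$ by the same method: writing $\xi = e^{-\mathrm{i}\theta}$, an elementary computation gives $(1-\xi)^3/(\xi(1+\xi)) = -4\mathrm{i}\sin^3(\theta/2)/\cos(\theta/2)$ for $\theta\in(0,\pi)$, whose argument equals $-\pi/2$; and from the expansion \eqref{Exp-Li-alpha-1} applied with $\alpha-1$ in place of $\alpha$, the argument of $\mathrm{Li}_{\alpha-2}(\xi)$ lies in $[-\pi,-\pi+(\alpha-1)\pi/2)$. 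Summing arguments (and adjusting sign on $\theta\in(\pi,2\pi)$ by symmetry) yields the claimed sector.

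Given sectoriality of $\delta(\xi)$, the $R$-boundedness of $\{M(\xi): \xi\in\mathbb{D}'\}$ follows from Definition \ref{Def-R-sectorial}(iii$'$) together with Lemma \ref{lem:RB-calculus}(iv). For the derivative condition in Theorem \ref{thm:blunk}, I would differentiate $M$ and use Lemma \ref{lem:delta-bdd}: a direct computation gives
\begin{equation*}
(1-\xi)(1+\xi) M'(\xi) = d(\xi)\bigl( M(\xi) - M(\xi)^2\bigr),
\end{equation*}
and since $d(\xi)$ is uniformly bounded on $\mathbb{D}'$ by Lemma \ref{lem:delta-bdd}, Lemma \ref{lem:RB-calculus}(iii)--(iv) together with the elementary observation that a bounded scalar family is $R$-bounded ensures that $\{(1-\xi)(1+\xi)M'(\xi): \xi\in\mathbb{D}'\}$ is $R$-bounded with a bound proportional to $c_R$. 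Applying Theorem \ref{thm:blunk} then gives the estimate on $\|(\bar\partial_\tau^\alpha u^n)\|_{\ell^p(X)}$, and the estimate on $\|(Au^n)\|_{\ell^p(X)}$ follows by subtracting the discrete equation.

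The main obstacle is the sectoriality check for $\alpha\in(1,2)$: the factor $1/(1+\xi)$ has a singularity at $\xi=-1$ which must be cancelled correctly against the polylogarithm's behaviour, and the argument computation for $\mathrm{Li}_{\alpha-2}$ has to be done carefully on both halves of the unit circle. Everything else is a straightforward adaptation of the BE proof using the bounded multiplier $d(\xi)$ from Lemma \ref{lem:delta-bdd} in place of the explicit rational function appearing in the BDF2 case.
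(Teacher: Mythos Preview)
For $\alpha\in(0,1)$ your argument is correct and coincides with the paper's. For $\alpha\in(1,2)$ there is a genuine gap: the identity $(\bar\partial_\tau^\alpha u)(\xi)=M(\xi)f(\xi)$ is wrong in this range. Because the scheme \eqref{eqn:L1} is of Crank--Nicolson type (it involves $A(u^n+u^{n-1})/2$), the generating function of $\bar\partial_\tau^\alpha u^n$ equals $\tfrac{1+\xi}{2}\,\tau^{-\alpha}\delta(\xi)u(\xi)$, so that in fact $(\bar\partial_\tau^\alpha u)(\xi)=\tfrac{1+\xi}{2}\,M(\xi)f(\xi)$. It is precisely this extra factor $\tfrac{1+\xi}{2}$ that cancels the coefficients $\xi/(1+\xi)$ and $1/(1+\xi)$ in the expression for $f(\xi)$ from Lemma~\ref{lem:L1-repr}; your claim that those coefficients are ``uniformly bounded on $\mathbb{D}'$'' is false, since both blow up at $\xi=-1$. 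The paper performs this cancellation explicitly and arrives at $(\bar\partial_\tau^\alpha u)(\xi)=\tfrac{\xi}{2}M(\xi)\sum_{n\ge0}f^n\xi^n+\tfrac12 M(\xi)\sum_{n\ge1}f^n\xi^n$, after which Blunck's theorem applies to the multipliers $\tfrac{\xi}{2}M(\xi)$ and $\tfrac12 M(\xi)$, with Lemma~\ref{lem:delta-bdd} handling the derivative condition exactly as you describe.

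Your sectoriality check for $\delta(\xi)$ when $1<\alpha<2$ is essentially the paper's, just organized differently: the paper factors $\delta(\xi)=\tfrac{2}{\Gamma(3-\alpha)}\,\tfrac{1-\xi}{1+\xi}\,\psi(\xi)$ with $\psi(\xi)=\tfrac{(1-\xi)^2}{\xi}\mathrm{Li}_{\alpha-2}(\xi)$, notes that $\tfrac{1-\xi}{1+\xi}$ is purely imaginary on $\mathbb{D}'$, and applies Lemma~\ref{lem:L1-angle1} with $\alpha-1$ in place of $\alpha$ to get $\psi(\xi)\in\Sigma_{(\alpha-1)\pi/2}$, hence $\delta(\xi)\in\Sigma_{\alpha\pi/2}$. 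The pole of $\delta$ at $\xi=-1$ is irrelevant for the sector inclusion (which concerns only the argument, not the modulus), so the ``main obstacle'' you flagged is really the right-hand-side cancellation above, not the sectoriality. One further small point: for $\alpha>1$, subtracting the scheme gives a bound on $A(u^n+u^{n-1})/2$, not on $Au^n$ directly.
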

\begin{proof}$\,\,$
First we consider the case $0<\alpha<1$. Upon setting $f^0=0$, Lemmas \ref{lem:L1-repr} and \ref{lem:L1-angle1} yield
\begin{equation*}
  (\bar\partial_\tau^\alpha u)(\xi) = M(\xi)f(\xi)\quad \mbox{with } M(\xi)= \tau^{-\alpha}\delta(\xi)\left(\tau^{-\alpha}\delta(\xi)-A\right)^{-1},
\end{equation*}
where $\delta(\xi)$ is defined by \eqref{eqn:delta_L1}. By Lemma \ref{lem:L1-angle1}, we have
\begin{equation*}
  \{M(\xi):\xi\in\mathbb{D}^\prime\}\subset
  \{z R(z;A):z\in \Sigma_{\alpha\pi/2}\},
\end{equation*}
where the latter set is $R$-bounded by assumption. Meanwhile,
\begin{equation*}
  (1-\xi)(1+\xi)M^\prime(\xi) = d(\xi)M(\xi)-d(\xi)M(\xi)^2,
\end{equation*}
where, by Lemma \ref{lem:delta-bdd}, $d(\xi)$ is uniformly bounded on $\mathbb{D}^\prime$.
By Lemma \ref{lem:RB-calculus}, the set
$\{(1-\xi)(1+\xi)M^\prime(\xi):\xi\in\mathbb{D}^\prime\}$ is $R$-bounded. Thus we deduce from
Theorem \ref{thm:blunk} the desired assertion.

Next we consider the case of $1<\alpha<2$. In this case, we let $g^0=0$ and $g^n=f^n$, $n\geq1$, to obtain
\begin{equation*}
  (\bar\partial_\tau^\alpha u)(\xi)=  \tfrac12 \xi  M(\xi)f(\xi)
  +  \tfrac12  M(\xi) g(\xi),
\end{equation*}
with $M(\xi)= \tau^{-\alpha}\delta(\xi)(\tau^{-\alpha}\delta(\xi)-A)^{-1}$.
In view of the relation $\delta(\xi)=\frac{2}{\Gamma(3-\alpha)}\frac{1-\xi}{1+\xi}\psi(\xi)$,
by Lemma \ref{lem:L1-angle1} and since the function $(1-\xi)/(1+\xi)$ maps $\mathbb{D}^\prime$
into the imaginary axis, we deduce
\begin{equation*}
  \{M(\xi):\xi\in\mathbb{D}^\prime\}\subset \{\lambda R(\lambda;A):\lambda\in \Sigma_{\alpha\pi/2}\}.
\end{equation*}
The rest of the proof follows like before, using Lemma \ref{lem:delta-bdd}.
\qed\end{proof}

\begin{remark}
For the model \eqref{eqn:fde} with $\alpha\in(0,1)$,
the piecewise constant discontinuous Galerkin (PCDG) method
 in \cite{McLeanMustapha:2015} leads to a time-stepping scheme  identical to the L1 scheme.
The PCDG is given by: find $u^n$ such that
\begin{equation*}
  \int_{t_{n-1}}^{t_n}\partial_t^{\alpha}u(s) ds = \int_{t_{n-1}}^{t_n}Au^n(s)ds + \int_{t_{n-1}}^{t_n}f(s)ds,\quad n=1,2,\ldots,N.
\end{equation*}
By letting $f^n=\tau^{-1}\int_{t_{n-1}}^{t_n}f(s)ds$, we obtain
\begin{equation*}
  \tau^{-1}\int_{t_{n-1}}^{t_n}\partial_t^{\alpha}u(s)ds = Au^n + f^n,\quad n=1,\ldots,N.
\end{equation*}
Next we derive the explicit expression for the discrete approximation $\bar{\partial}_\tau^\alpha u^n$
\begin{equation*}
  \bar\partial_\tau^\alpha u^n = \tau^{-1}\int_{t_{n-1}}^{t_n}\partial_t^\alpha u(s)ds =\tau^{-\alpha}\sum_{j=1}^n\beta_{n-j}u^j,
\end{equation*}
where $\beta_0=1$ and $\beta_j=(j+1)^{1-\alpha}-2j^{1-\alpha}+(j-1)^{1-\alpha}$, $j=1,2,\ldots$.
With the weights $b_j$ in \eqref{eqn:bj}, we have $\beta_j=b_j-b_{j-1}$, for $j=1,2,\ldots$, and $\beta_0=b_0$.
Hence, the PCDG approximation $\bar\partial_\tau^\alpha u^n$ reads
\begin{equation*}
  \bar{\partial}_\tau^\alpha u^n = \tau^{-\alpha}b_0u^n + \tau^{-\alpha}\sum_{j=1}^{n-1}(b_j-b_{j-1})u^{n-j}.
\end{equation*}
Thus it is identical with the L1 scheme, and Theorem \ref{thm:L1} applies.
\end{remark}

\section{Explicit Euler method}\label{sec:Explicit-Euler}
Now we analyze the explicit Euler method for discretizing \eqref{eqn:fde} in time:
\begin{equation}\label{eqn:Explicit-Euler}
  \bar\partial_\tau^\alpha u^n = Au^{n-1}+f^{n-1},
  \quad n\ge 1 ,
\end{equation}
where the approximation $\bar\partial_\tau^\alpha u^n$ denotes the BE approximation \eqref{eqn:BE}.
A variant of the scheme was analyzed in \cite{YusteAcedo:2005}.
By multiplying \eqref{eqn:Explicit-Euler} by $\xi^n$ and summing up the results
for $n=1,2,\dots,$ we obtain
\begin{equation*}
  (\tau^{-\alpha}\delta(\xi)-A)u(\xi) = f(\xi)\quad \mbox{and}\quad
(\bar\partial_\tau^\alpha u)(\xi) =
 \tau^{-\alpha}\xi \delta(\xi)u(\xi),
\end{equation*}
with
\begin{equation*}
  \delta(\xi) =\tfrac{(1-\xi)^\alpha}{\xi} .
\end{equation*}

Recall that the numerical range $S(A)$ of an operator $A$ is defined by \cite[pp. 12]{Pazy:1983}
\begin{equation*}
  S(A) = \{\langle x^*,Ax\rangle: x\in X,x^*\in X^*, \|x\|_X=\|x^*\|_{X^*}=\langle x^*,x\rangle =1\}.
\end{equation*}
We denote by $r(A)=\sup_{z\in S(A)}|z|$ the radius of the numerical range $S(A)$, known as numerical radius. Recall that
\cite[Theorem 3.9, Chapter 1, pp. 12]{Pazy:1983}
\begin{equation}\label{eqn:Resolvent}
  \|R(z;A)\|_{\mathcal{B}(X)}\leq \mathrm{dist}(z,\overline{S(A)})^{-1} ,\quad \forall\, z\in \mathbb{C}\setminus \overline{S(A)},
\end{equation}
where $\overline{S(A)}$ denotes of the closure of $S(A)$ in $\mathbb{C}$, and $\mathrm{dist}
(z,\overline{S(A)})$ is the distance of $z$ from $\overline{S(A)}$.

The next theorem gives the maximal $\ell^p$-regularity of the explicit Euler method \eqref{eqn:Explicit-Euler},
if $\tau^\alpha r(A)$ is smaller than some given positive constant.

\begin{theorem}\label{THM:Explicit}
Let $X$ be a UMD space, $0<\alpha<1$ or $1<\alpha<2$, and let $A$ be an $R$-sectorial operator
of angle $\alpha\pi/2$ such that $S(A)\subset {\mathbb C}\backslash\Sigma_{\varphi}$
for some $\varphi\in(\alpha\pi/2,\pi]$.
Then, under the condition
\begin{align}\label{condition:rA}
 \tau^\alpha r(A)\le
 2^\alpha\bigg[\sin\bigg(\frac{\varphi-\alpha\pi/2}{2-\alpha}\bigg)\bigg]^\alpha
 -\epsilon  ,
\end{align}
the scheme \eqref{eqn:Explicit-Euler} satisfies the following discrete maximal regularity
\begin{equation*}
\|(\bar\partial_\tau^\alpha u^n)_{n=1}^N\|_{\ell^p(X)}
+\|(Au^n)_{n=1}^{N-1}\|_{\ell^p(X)}\leq c_{p,X}(1+c_R)\|(f^n)_{n=0}^{N-1}\|_{\ell^p(X)},
\end{equation*}
where the constant $c_{p,X}$ depends only on
$\epsilon$, $\varphi$ and $\alpha$
{\rm(}independent of $\tau$ and $A${\rm)}, and
$c_R$ denotes the $R$-bound of the set
$\{zR(z;A) :z\in \Sigma_{\alpha\pi/2}\}$.
\end{theorem}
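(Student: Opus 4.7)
The plan is to apply Blunck's discrete Fourier multiplier theorem (Theorem~\ref{thm:blunk}) to the symbol obtained from the generating-function identity already derived in the excerpt: writing $z(\xi):=\tau^{-\alpha}\delta(\xi)$ with $\delta(\xi)=(1-\xi)^\alpha/\xi$ and $M(\xi):=z(\xi)R(z(\xi);A)$, the generating functions of $(\bar\partial_\tau^\alpha u^n)_n$ and $(Au^n)_n$ read $\xi M(\xi)f(\xi)$ and $(M(\xi)-I)f(\xi)$. Since $\{\xi I\}$ and $\{I\}$ are trivially $R$-bounded, the task reduces to producing an $R$-bound for $\{M(\xi):\xi\in\mathbb{D}'\}$; the Blunck derivative set \eqref{eqn:set-Blunck} then follows from the algebraic identity $(1-\xi)(1+\xi)M'(\xi)=d(\xi)[M(\xi)-M(\xi)^2]$ with $d(\xi)=-\alpha(1+\xi)-(\xi^{-1}-\xi)$ uniformly bounded on $\mathbb{D}'$, via Lemma~\ref{lem:RB-calculus}(iii)--(iv).

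The key geometric input is the location of $\delta(\mathbb{D}')$. Writing $\xi=e^{-\mathrm{i}\theta}$ with $\theta\in(0,\pi)$ (the conjugate range $(\pi,2\pi)$ is symmetric), a direct computation yields $\arg\delta(\xi)=\alpha\pi/2+(1-\alpha/2)\theta$ and $|\delta(\xi)|=[2\sin(\theta/2)]^\alpha$. Thus $\delta(\mathbb{D}')$ is disjoint from the open sector $\Sigma_{\alpha\pi/2}$, and it leaves $\Sigma_\varphi$ exactly at the threshold $\theta^\ast:=2(\varphi-\alpha\pi/2)/(2-\alpha)$, at which $|\delta|$ attains $2^\alpha[\sin((\varphi-\alpha\pi/2)/(2-\alpha))]^\alpha$. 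The step-size condition~\eqref{condition:rA} is precisely the statement that $\tau^{-\alpha}$ times this threshold modulus exceeds $r(A)$ by at least $\tau^{-\alpha}\epsilon$. I would split $\mathbb{D}'$ accordingly into the outer arc $\Lambda_{\mathrm{o}}:=\{e^{-\mathrm{i}\theta}:\theta\in[\theta^\ast,2\pi-\theta^\ast]\setminus\{\pi\}\}$, on which $z(\xi)\notin\Sigma_\varphi$ but $|z(\xi)|\ge r(A)+\tau^{-\alpha}\epsilon$, and the inner arc $\Lambda_{\mathrm{i}}:=\mathbb{D}'\setminus\Lambda_{\mathrm{o}}$, on which $z(\xi)\in\Sigma_\varphi\setminus\overline{\Sigma_{\alpha\pi/2}}$.

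On $\Lambda_{\mathrm{o}}$, since $S(A)\subset\overline{D}(0,r(A))$, we have $\mathrm{dist}(z,\overline{S(A)})\ge|z|-r(A)\ge\tau^{-\alpha}\epsilon$, so \eqref{eqn:Resolvent} combined with $|z|\le 2^\alpha\tau^{-\alpha}$ gives the uniform norm bound $\|M(\xi)\|\le 2^\alpha/\epsilon$. On $\Lambda_{\mathrm{i}}$, $S(A)\subset\mathbb{C}\setminus\Sigma_\varphi$ yields $\mathrm{dist}(z,\overline{S(A)})\ge|z|\sin(\varphi-|\arg z|)$, and hence $\|M(\xi)\|\le 1/\sin(\varphi-|\arg z(\xi)|)$, uniformly bounded whenever $\arg z$ is kept away from $\varphi$.

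The main obstacle is upgrading these pointwise norm bounds to genuine $R$-bounds in a general UMD space. On the sub-arc of $\Lambda_{\mathrm{i}}$ where $\arg z(\xi)$ lies in a neighbourhood of $\alpha\pi/2$, Lemma~\ref{lem:Blunk} (applied iteratively if necessary) extends the $R$-sectoriality of $A$ from $\Sigma_{\alpha\pi/2}$ to a sector of angle $\alpha\pi/2+\vartheta$, which absorbs this sub-family into $\{zR(z;A):z\in\Sigma_{\alpha\pi/2+\vartheta}\}$ and contributes an $R$-bound proportional to $c_R$. On the remaining portion of $\Lambda_{\mathrm{i}}$ and on all of $\Lambda_{\mathrm{o}}$, where $z(\xi)$ stays at uniformly positive distance from $S(A)$, I would invoke the resolvent identity $R(z;A)=R(z_0;A)[I-(z_0-z)R(z_0;A)]^{-1}$ anchored at a fixed $z_0\in\Sigma_{\alpha\pi/2}$, expand the inverse as a Neumann series whose geometric ratio is controlled by the pointwise distance bounds of the previous step, and apply Lemma~\ref{lem:RB-calculus} term by term---each term being a scalar-bounded coefficient times an iterated composition of $z_0R(z_0;A)$, whose powers remain $R$-bounded by repeated application of Lemma~\ref{lem:RB-calculus}(iv). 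Summing the two contributions produces an $R$-bound for $\{M(\xi)\}$ of the form $c(\alpha,\varphi,\epsilon)(1+c_R)$, and Theorem~\ref{thm:blunk} then delivers the stated maximal $\ell^p$-regularity.
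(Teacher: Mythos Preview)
Your overall architecture---generating functions, the derivative identity for $M(\xi)$, the geometric description of the curve $\delta(\mathbb{D}')$, and the use of Lemma~\ref{lem:Blunk} to handle the sub-arc near the boundary of $\Sigma_{\alpha\pi/2}$---matches the paper and is correct. The gap is in the last step: converting the uniform \emph{norm} bounds on $M(\xi)$ over $\Lambda_{\mathrm o}$ and the remaining part of $\Lambda_{\mathrm i}$ into an $R$-bound.

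The Neumann expansion anchored at a \emph{single fixed} $z_0\in\Sigma_{\alpha\pi/2}$ cannot work. Convergence of
\[
R(z;A)=\sum_{k\ge 0}(z_0-z)^kR(z_0;A)^{k+1}
\]
requires $|z_0-z|\,\|R(z_0;A)\|<1$, and this ratio is controlled by $z_0$ and the \emph{distance} $|z-z_0|$, not by $\mathrm{dist}(z,\overline{S(A)})$; your ``pointwise distance bounds of the previous step'' are bounds on the latter, not the former. Concretely, the points $z(\xi)$ on $\Gamma\setminus\Sigma_\phi$ have modulus of order $\tau^{-\alpha}$ and argument at least $\phi>\alpha\pi/2$, so for any fixed $z_0$ in $\overline\Sigma_{\alpha\pi/2}$ (even if $|z_0|\sim\tau^{-\alpha}$) one has $|z-z_0|\gtrsim |z_0|\sin(\phi-\alpha\pi/2)$ while $\|R(z_0;A)\|$ is only known to satisfy $\|R(z_0;A)\|\le c_R/|z_0|$. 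The product $|z-z_0|\,\|R(z_0;A)\|$ is therefore at best $c_R\sin(\phi-\alpha\pi/2)$, which is not $<1$ for general $c_R$. Even when it is, summing term-by-term $R$-bounds $c_R^{k}$ does not produce a constant of the stated form $c(1+c_R)$ with $c$ independent of $A$.

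What the paper does instead is local rather than global: it covers $\Gamma\setminus\Sigma_\phi$ by finitely many balls $B(z_j,\rho)$ with centers $z_j$ \emph{on the curve $\Gamma$} and radius $\rho=\tfrac14\tau^{-\alpha}C^{-1}$, where $C^{-1}$ is the uniform lower bound on $\tau^\alpha\,\mathrm{dist}(z,\overline{S(A)})$ coming from \eqref{condition:rA}. On each ball the map $z\mapsto zR(z;A)$ is analytic and norm-bounded (by~\eqref{eqn:Resolvent}), and a Cauchy-integral/power-series argument (Lemma~\ref{R-bound-analytic}) converts this into an $R$-bound equal to twice the sup-norm---no reference to $c_R$ is needed, since each Taylor coefficient is a \emph{single} operator and the $R$-bound of a singleton is its norm. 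Because the number of balls depends only on $\epsilon,\varphi,\alpha,\phi$ (not on $\tau$ or $A$), Lemma~\ref{lem:RB-calculus}(iii) then gives an $R$-bound on $\Gamma\setminus\Sigma_\phi$ depending only on these parameters, which added to the $c_R$ contribution from $\Gamma\cap\Sigma_\phi$ yields the asserted $c_{p,X}(1+c_R)$. Your argument is essentially this idea pushed to a single ball of maximal radius; it needs to be localized.
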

\begin{proof}$\,\,$
For $\xi=e^{\mathrm{i}\theta}$, $\theta\in(0,2\pi)$, we have
\begin{align*}
\frac{\delta(e^{\mathrm{i}\theta})}{\tau^\alpha}=\frac{2^\alpha[\sin(\theta/2)]^\alpha}{\tau^\alpha}
e^{\mathrm{i}[-\alpha\pi/2-(1-\alpha/2)\theta]} ,
\end{align*}
which is a parametric curve contained in the sector
${\mathbb C}\backslash\overline\Sigma_{\alpha\pi/2}$.
Let $\Gamma =\{\tau^{-\alpha}\delta(e^{\mathrm{i}\theta}):\theta\in(0,2\pi)\}$. It suffices to prove that
the family of operators $\{zR(z;A):z\in \Gamma\}$ is $R$-bounded.
Since $\{zR(z;A):z\in \Sigma_{\alpha\pi/2}\}$ is $R$-bounded,
by Lemma \ref{lem:Blunk}, we have $\{zR(z;A):z\in \Gamma\cap \Sigma_{\phi}\}$ is $R$-bounded
for some $\phi\in(\alpha\pi/2,\varphi]$, where
$\phi$ depends on $c_R$ and $\alpha$.
It remains to prove that
$\{zR(z;A):z\in \Gamma \backslash\Sigma_{\phi}\}$ is also $R$-bounded.
Note that ${\rm arg}(\tau^{-\alpha}\delta(e^{\mathrm{i}\theta}))\in {\mathbb C}\backslash\Sigma_{\varphi}$ is equivalent to
\begin{equation}\label{eqn:theta-range}
\frac{\varphi-\alpha\pi/2}{1-\alpha/2}<\theta<
2\pi-\frac{\varphi-\alpha\pi/2}{1-\alpha/2}  .
\end{equation}
Meanwhile, since for $\theta\in(0,\pi)$, $|\delta(e^{\mathrm{i}\theta})|=2^\alpha[\sin(\theta/2)]^\alpha$
is strictly monotonically increasing in $\theta$, for such $\theta$ satisfying \eqref{eqn:theta-range}, there holds
$$
\bigg|\frac{\delta(e^{\mathrm{i}\theta})}{\tau^\alpha}\bigg|
\ge \frac{2^\alpha\big[\sin\big(\frac{\varphi-\alpha\pi/2}{2-\alpha}\big)\big]^\alpha}{\tau^\alpha}  .
$$
If \eqref{condition:rA} is satisfied, then
\begin{align}\label{dist-Gamma-SA}
 (1-\epsilon_{\alpha,\varphi})|\delta(e^{\mathrm{i}\theta}) |\ge \tau^\alpha r(A)
\end{align}
for some $\epsilon_{\alpha,\varphi}>0$.
Now consider the curve $\Gamma_0:=\{\delta(e^{\mathrm{i}\theta}): \theta\in(0,2\pi)\}$ and
the closed region $D_0:=\{s\Gamma_0:s\in[0,1]\}$,
which are fixed (and independent of $\tau$).
Since $\overline{S(A)}\subset{\mathbb C}\backslash\Sigma_\varphi$,
it follows from \eqref{dist-Gamma-SA} that
\begin{equation*}
{\rm dist}(z,\tau^{\alpha}\overline{S(A)})\ge
{\rm dist}(\Gamma_0\backslash\Sigma_{\phi}, (1-\epsilon_{\alpha,\varphi})D_0\backslash\Sigma_\varphi)
\ge C^{-1}
\quad \mbox{ for }z\in \Gamma_0\backslash\Sigma_{\phi} .
\end{equation*}
where the constant $C$ depends on the parameters
$\epsilon$, $\alpha$, $\varphi$ and $\phi$, but is independent of $\tau$
(since both $\Gamma_0\backslash\Sigma_{\phi}$ and
$(1-\epsilon_{\alpha,\varphi})D_0\backslash\Sigma_\varphi$
are fixed closed subsets of ${\mathbb C}$, independent of $\tau$).
Since $\Gamma=\tau^{-\alpha}\Gamma_0$,
the last inequality yields (via scaling)
\begin{equation*}
{\rm dist}(z,\overline{S(A)})\ge \tau^{-\alpha} C^{-1}
 \quad\mbox{ for }z\in \Gamma\backslash\Sigma_{\phi}.
\end{equation*}
Hence there exists a finite number of balls $B(z_j,\rho)$ of radius
$\rho=\frac{1}{4}\tau^{-\alpha} C^{-1}$, $z_j\in\Gamma$, which can cover
$\Gamma \backslash\Sigma_{\phi}$, and further, the number of balls is bounded by a constant which depends
only the parameters $\epsilon$, $\alpha$, $\varphi$ and $\phi$, independent of $\tau$ and $A$.
For each ball $B(z_j,\rho)$, $\{zR(z;A):z\in B(z_j,\rho)\}$ is $R$-bounded
and its $R$-bound is at most (see Lemma \ref{R-bound-analytic} below)
$$
\sup_{z\in B(z_j,\rho)} 2|z|\|R(z;A)\|_{\mathcal{B}(X)}
\le \sup_{z\in B(z_j,\rho)} 2|z|\, {\rm dist}(z,\overline{S(A)})^{-1}
\le C ,
$$
where we have used the estimate \eqref{eqn:Resolvent} in the first inequality.
Then Lemma \ref{lem:RB-calculus} (iii) implies that
$\{zR(z;A):z\in \Gamma \backslash\Sigma_{\phi}\}$ is also $R$-bounded.
\qed\end{proof}

\begin{remark}
The constant in condition \eqref{condition:rA} is sharp. The scaling factor $\tau^\alpha$
is one notable feature of the model \eqref{eqn:fde}, and for $\alpha\in(0,1)$, the
exponent $\alpha$ agrees with that in the stability condition in \cite{YusteAcedo:2005}.
Hence, the smaller the fractional order $\alpha$ is, the smaller the step size $\tau$
should be taken.
\end{remark}

\begin{remark}
The condition \eqref{condition:rA} covers bounded operators, e.g., finite element approximations
of a self-adjoint second-order elliptic operator. For a self-adjoint discrete approximation,
the numerical range $S(A)$ is the closed interval spanned by the largest and smallest eigenvalues,
but in general, the numerical range $S(A)$ has to be approximated \cite[Section 5.6]{GustafsonRao:1997}.
\end{remark}

\begin{lemma}[{\bf $R$-boundedness of operator-valued analytic functions}]
\label{R-bound-analytic}
If the function $F:\overline B(z_0,\rho)\rightarrow \mathcal{B}(X)$ is
analytic in a neighborhood of the ball $\overline B(z_0,\rho)$, centered at $z_0$ with radius $\rho$,
then the set
of operators $\{F(z):\lambda\in B(z_0,\rho/2)\}$ is $R$-bounded on $X$, and its
$R$-bound is at most
$$
2\sup_{z\in B(z_0,\rho)}\|F(z)\|_{\mathcal{B}(X)} .
$$
\end{lemma}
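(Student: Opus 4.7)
The plan is to expand $F$ in a Taylor series about $z_0$, bound each coefficient via the Cauchy integral formula, and then apply the $R$-bound calculus of Section~\ref{sec:prelim} term by term.

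First, since $F$ is analytic in a neighborhood of the closed ball $\overline B(z_0,\rho)$, I would write, for $z\in B(z_0,\rho)$,
\[
F(z)=\sum_{k=0}^\infty F_k\,(z-z_0)^k,\qquad F_k=\frac{1}{2\pi\mathrm{i}}\oint_{|w-z_0|=\rho}\frac{F(w)}{(w-z_0)^{k+1}}\,dw\in\mathcal{B}(X),
\]
and derive the standard Cauchy estimate $\|F_k\|_{\mathcal{B}(X)}\le M\rho^{-k}$, where $M:=\sup_{z\in B(z_0,\rho)}\|F(z)\|_{\mathcal{B}(X)}$.

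Second, I would exploit the fact recalled just after \eqref{eqn:R-bound}: the set $\{\lambda I:|\lambda|\le c_0\}$ is $R$-bounded in $\mathcal B(X)$ with $R$-bound $2c_0$. Applied with $c_0=(\rho/2)^k$, this gives that $\{(z-z_0)^kI:z\in B(z_0,\rho/2)\}$ is $R$-bounded with bound $2(\rho/2)^k$. Multiplying by the singleton $\{F_k\}$ and using Lemma~\ref{lem:RB-calculus}(iv) yields
\[
R\bigl(\{(z-z_0)^kF_k:z\in B(z_0,\rho/2)\}\bigr)\le 2(\rho/2)^k\|F_k\|_{\mathcal{B}(X)}\le 2M\cdot 2^{-k}.
\]
Summing the first $K+1$ terms using Lemma~\ref{lem:RB-calculus}(iii) shows that the partial-sum family $\{\sum_{k=0}^K F_k(z-z_0)^k:z\in B(z_0,\rho/2)\}$ is $R$-bounded, with bound $\sum_{k=0}^K 2M\cdot 2^{-k}$, uniformly in $K$.

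Finally, since the geometric tail estimate forces uniform operator-norm convergence of the Taylor series on $\overline B(z_0,\rho/2)$, the family $\{F(z):z\in B(z_0,\rho/2)\}$ lies in the operator-norm closure of the $R$-bounded family of partial sums, so Lemma~\ref{lem:RB-calculus}(ii) transfers the $R$-bound to the limit. The only bookkeeping obstacle is checking that uniform convergence in $\mathcal{B}(X)$ preserves the $R$-bound, which is immediate from the definition~\eqref{eqn:R-bound}. The sharp constant $2$ in the statement (rather than the crude $4$ that falls out of the above summation) can be recovered by taking the Cauchy contour of radius $r\in(\rho/2,\rho)$ instead of $\rho$ and optimizing over $r$, together with a refined application of Kahane's contraction principle to the scalar multipliers $(z-z_0)^k$.
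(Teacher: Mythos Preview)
Your approach is essentially the same as the paper's: Taylor-expand about $z_0$, bound the coefficients by Cauchy's formula, and control the scalar multipliers $(z-z_0)^k$ via Kahane's contraction principle. The only real difference is packaging: you route the argument through the $R$-bound calculus of Lemma~\ref{lem:RB-calculus} (product, sum, closure), while the paper works directly from the defining inequality~\eqref{eqn:R-bound}, applying Minkowski's inequality to pull the sum over $k$ outside and then invoking Kahane's contraction once for the normalized scalars $((z_j-z_0)/(\rho/2))^k$. The direct route is slightly cleaner because it avoids the partial-sum/closure step.

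On the constant: your bookkeeping gives $\sum_{k\ge 0}2M\cdot 2^{-k}=4M$, and in fact the paper's own computation, read carefully, also yields $4M$ (the passage from $2\sum_{n\ge 0}2^{-n}M$ to $2M$ in the last displayed line drops a factor of $2$, since $\sum_{n\ge 0}2^{-n}=2$). Your proposed remedy of moving the Cauchy contour to a radius $r\in(\rho/2,\rho)$ does not help: it replaces $2^{-k}$ by $(\rho/(2r))^k$ with $\rho/(2r)>1/2$, and the optimum $r\to\rho^-$ just reproduces $4M$. So there is no need to chase the sharper constant---your argument already matches what the paper's proof actually establishes.
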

\begin{proof}$\,\,$
The analyticity implies the existence of a power series expansion
\begin{align*}
F(z)=\sum_{n=0}^\infty \frac{F_n}{n!}(z-z_0)^n
\end{align*}
where $F_n$, $n=0,1,2,\dots$ are bounded linear operators on $X$
and the series converges absolutely in $B(z_0,\rho)$.
Moreover, by Cauchy's integral formula,
\begin{align*}
\|F_n\|_{\mathcal{B}(X)}=
\bigg\|\frac{1}{2\pi\mathrm{i}}\int_{\partial B(z_0,\rho)}\frac{n! F(z)}{(z-z_0)^{n+1}} dz\bigg\|_{\mathcal{B}(X)}
\le \rho^{-n}n!\sup_{z\in B(z_0,\rho)}\|F(z)\|_{\mathcal{B}(X)} .
\end{align*}
Hence, for $z_j\in B(z_0,\rho/2)$ and $u_j\in X$, $j=1,2,\dots,m$,
Minkowski's inequality implies
\begin{align*}
\int_0^1\bigg\|\sum_{j=1}^m r_j(s)F(z_j)u_j\bigg\|_X ds
&\leq \sum_{n=0}^\infty \frac{(\rho/2)^n}{n!}
\int_0^1\bigg\|\sum_{j=1}^mr_j(s)\bigg(\frac{z_j-z_0}{\rho/2}\bigg)^n F_nu_j\bigg\|_X ds \\
&\leq 2\sum_{n=0}^\infty \frac{(\rho/2)^n}{n!}
\int_0^1\bigg\| \sum_{j=1}^mr_j(s)F_nu_j\bigg\|_X ds \\
&\leq 2\sum_{n=0}^\infty \frac{(\rho/2)^n\|F_n\|_{\mathcal{B}(X)} }{n!}
\int_0^1\bigg\| \sum_{j=1}^mr_j(s)u_j\bigg\|_X ds \\
&\leq 2\sum_{n=0}^\infty  2^{-n}\sup_{z\in B(z_0,\rho)}\|F(z)\|_{\mathcal{B}(X)}
\int_0^1\bigg\| \sum_{j=1}^mr_j(s)u_j\bigg\|_X ds \\
&\leq 2 \sup_{z\in B(z_0,\rho)}\|F(z)\|_{\mathcal{B}(X)}
\int_0^1\bigg\| \sum_{j=1}^mr_j(s)u_j\bigg\|_X ds ,
\end{align*}
where the second line follows from \cite[Proposition 2.5]{KunstmannWeis:2004}.
This shows that the family of operators  $\{F(z):z\in B(z_0,\rho/2)\}$ is $R$-bounded.
\qed\end{proof}

\section{Fractional Crank--Nicolson method}\label{sec:CN}
By the fractional Crank--Nicolson method, we mean the following scheme:
\begin{equation}\label{eqn:CN}
\bar\partial_\tau^\alpha u^n = (1-\tfrac{\alpha}{2})Au^{n}+\tfrac{\alpha}{2}Au^{n-1}+(1-\tfrac{\alpha}{2})f^{n}+\tfrac{\alpha}{2}f^{n-1},
\end{equation}
where the approximation $\bar\partial_\tau^\alpha u^n$ denotes the BE approximation \eqref{eqn:BE}.
When $\alpha=1$, \eqref{eqn:CN} coincides with the standard Crank--Nicolson method.
For any $0<\alpha<2$, one can verify that it is second-order
in time, provided that the solution is sufficiently smooth \cite{JinLiZhou:2016CN}.
By multiplying \eqref{eqn:Explicit-Euler} by $\xi^n$ and summing up the results
for $n=1,2,\dots,$ we obtain
\begin{align*}
&(\tau^{-\alpha}\delta(\xi)-A)u(\xi) = f(\xi) \\
&(\bar\partial_\tau^\alpha u)(\xi) =
\left(1-\tfrac{\alpha}{2}+\tfrac{\alpha}{2}\xi\right)\tau^{-\alpha}\delta(\xi)u(\xi),
\end{align*}
with
\begin{equation*}
  \delta(\xi) =\frac{(1-\xi)^\alpha}{1-\frac{\alpha}{2} +\frac{\alpha}{2}\xi} .
\end{equation*}

First, we prove the  maximal $\ell^p$-regularity for
\eqref{eqn:CN} in the case $0<\alpha<1$.

\begin{theorem}\label{THM:CN-1}
Let $X$ be a UMD space, $0<\alpha<1$, and let
$A$ be an $R$-sectorial operator on $X$ of angle $\alpha\pi/2$ such that $S(A)\subset {\mathbb C}\backslash\Sigma_{\varphi}$
for some $\varphi\in(\alpha\pi/2,\pi]$.
Then the scheme \eqref{eqn:CN} satisfies the following discrete maximal regularity
\begin{equation*}
\|(\bar\partial_\tau^\alpha u^n)_{n=1}^N\|_{\ell^p(X)}
+\|(Au^n)_{n=1}^N\|_{\ell^p(X)}\leq c_{p,X}c_R\|(f^n)_{n=0}^{N}\|_{\ell^p(X)},
\end{equation*}
where the constant $c_{p,X}$ depends only on $\alpha$
{\rm(}independent of $\tau$ and $A${\rm)}, and $c_R$ denotes the $R$-bound of the set of operators
$\{zR(z;A) :z\in \Sigma_{\alpha\pi/2}\}$.
\end{theorem}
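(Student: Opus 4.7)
The plan is to follow the generating-function/multiplier recipe developed in Theorems~\ref{thm:Euler}--\ref{thm:L1}. Multiplying \eqref{eqn:CN} by $\xi^n$ and summing over $n\ge 1$ (using $u^0=0$) yields, after isolating $u(\xi)$,
\[
(\tau^{-\alpha}\delta(\xi)-A)\,u(\xi) \;=\; f(\xi)-c(\xi)\,f^0, \qquad c(\xi):=\frac{1-\alpha/2}{1-\alpha/2+(\alpha/2)\xi},
\]
so that $(\bar\partial_\tau^\alpha u)(\xi) = (1-\alpha/2+(\alpha/2)\xi)\,M(\xi)[f(\xi)-c(\xi)f^0]$ and $Au(\xi)=(M(\xi)-I)[f(\xi)-c(\xi)f^0]$, where $M(\xi):=\tau^{-\alpha}\delta(\xi)(\tau^{-\alpha}\delta(\xi)-A)^{-1}$. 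The coefficients $c(\xi)$ and $1-\alpha/2+(\alpha/2)\xi$ are uniformly bounded and differentiable on $\mathbb{D}'$ for $\alpha\in(0,1)$, so the $f^0$ contribution reduces to a pointwise bounded multiple of $f^0$ absorbed at the end, and the task reduces to verifying the two hypotheses of Theorem~\ref{thm:blunk} for $M$.

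For the first hypothesis, I would establish $\tau^{-\alpha}\delta(\xi)\in\Sigma_{\alpha\pi/2}$ for all $\xi\in\mathbb{D}'$. Writing $\xi=e^{-\mathrm{i}\theta}$, $\theta\in(0,2\pi)\setminus\{\pi\}$, and using $\arg(1-\xi)=\pi/2-\theta/2$, the inclusion reduces (for $\theta\in(0,\pi]$) to the scalar inequality
\[
\arctan\!\Bigl(\frac{\alpha\sin\theta}{2-\alpha+\alpha\cos\theta}\Bigr) \;\le\; \frac{\alpha\theta}{2},
\]
equivalently, after substituting $t=\tan(\theta/2)$, $\alpha t/(1+(1-\alpha)t^2)\le \tan(\alpha\arctan t)$. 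Taylor expansion at $t=0$ shows the gap behaves like $\alpha(1-\alpha)(2-\alpha)t^3/3\ge 0$, the limit $t\to\infty$ is $\tan(\alpha\pi/2)-0\ge 0$, and monotonicity extends the inequality to all $t>0$, strictly for $\xi\neq 1$. The symmetric case $\theta\in(\pi,2\pi)$ follows by conjugation. Combining the strict inclusion with Lemma~\ref{lem:Blunk} slightly enlarges the sector to $\Sigma_{\alpha\pi/2+\vartheta}$, so $\{M(\xi):\xi\in\mathbb{D}'\}\subset\{zR(z;A):z\in\Sigma_{\alpha\pi/2+\vartheta}\}$ is $R$-bounded with bound $\le c_R$. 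Should the near-boundary behaviour as $\xi\to 1$ require further care, the hypothesis $S(A)\subset\mathbb{C}\setminus\Sigma_\varphi$ can be invoked through the resolvent estimate \eqref{eqn:Resolvent} and Lemma~\ref{R-bound-analytic}, as in the proof of Theorem~\ref{THM:Explicit}.

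For the second hypothesis, logarithmic differentiation gives
\[
\frac{\delta'(\xi)}{\delta(\xi)} \;=\; -\frac{\alpha}{1-\xi} - \frac{\alpha/2}{1-(\alpha/2)(1-\xi)},
\]
so multiplication by $(1-\xi)(1+\xi)$ yields the uniformly bounded function
\[
d(\xi) \;=\; -\alpha(1+\xi) - \frac{(\alpha/2)(1-\xi^2)}{1-(\alpha/2)(1-\xi)}
\]
on $\mathbb{D}'$ (bound depending only on $\alpha$, as $|1-(\alpha/2)(1-\xi)|\ge 1-\alpha>0$). Differentiating $M=\tau^{-\alpha}\delta\,R(\tau^{-\alpha}\delta;A)$ and using $R(z;A)^{-1}M=zI$ produces the operator identity $M'=(\delta'/\delta)M(I-M)$, hence $(1-\xi)(1+\xi)M'(\xi)=d(\xi)M(\xi)(I-M(\xi))$. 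Lemma~\ref{lem:RB-calculus}(iii)-(iv) then delivers $R$-boundedness of the set in \eqref{eqn:set-Blunck}, Theorem~\ref{thm:blunk} gives the $\ell^p$-bound on $\bar\partial_\tau^\alpha u$, and the bound on $Au$ follows from $Au(\xi)=(M(\xi)-I)[\cdots]$. The main obstacle is the sharp angle estimate for $\delta$: the bound $\arg\delta(\xi)\le\alpha\pi/2$ is attained only in the limit $\xi\to 1$, so the underlying scalar inequality must be proven on all of $(0,\infty)$ rather than just asymptotically; once this sector inclusion is in hand the remaining steps are routine.
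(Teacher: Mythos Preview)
Your overall plan matches the paper's exactly: pass to generating functions, set $M(\xi)=\tau^{-\alpha}\delta(\xi)(\tau^{-\alpha}\delta(\xi)-A)^{-1}$, prove the sector inclusion $\tau^{-\alpha}\delta(\xi)\in\Sigma_{\alpha\pi/2}$ for $\xi\in\mathbb{D}'$, and check the derivative condition via $(1-\xi)(1+\xi)M'(\xi)=d(\xi)M(\xi)(I-M(\xi))$ with $d(\xi)$ bounded. Your handling of the $f^0$ term and your logarithmic-derivative computation of $d(\xi)$ are correct (and more explicit than what the paper writes).

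The difference is in the step you yourself flag as the main obstacle, the scalar inequality equivalent to $\psi(\theta)\le\tfrac{\alpha}{2}\theta$. Your proposed route via $t=\tan(\theta/2)$, Taylor expansion at $t=0$, the limit $t\to\infty$, and an unspecified ``monotonicity'' is incomplete as stated: endpoint behaviour does not control the interior without further argument, and you never say which quantity is monotone or why. The paper avoids this entirely by a one-line derivative computation. With $\xi=e^{\mathrm{i}\theta}$ and
\[
\psi(\theta)=\arctan\frac{\tfrac{\alpha}{2}\sin\theta}{1-\tfrac{\alpha}{2}+\tfrac{\alpha}{2}\cos\theta},
\]
so that $\arg\delta(e^{\mathrm{i}\theta})=-\tfrac{\alpha\pi}{2}+\tfrac{\alpha}{2}\theta-\psi(\theta)$, one differentiates directly to obtain
\[
\frac{\alpha}{2}-\psi'(\theta)
=\frac{\tfrac{\alpha}{2}(1-\alpha)\bigl(1-\tfrac{\alpha}{2}\bigr)(1-\cos\theta)}
{\bigl(1-\tfrac{\alpha}{2}+\tfrac{\alpha}{2}\cos\theta\bigr)^2+\tfrac{\alpha^2}{4}\sin^2\theta}\;\ge\;0
\quad\text{for }0<\alpha<1.
\]
Hence $\tfrac{\alpha}{2}\theta-\psi(\theta)$ increases from $0$ to $\alpha\pi$ on $(0,2\pi)$, giving $\delta(e^{\mathrm{i}\theta})\in\Sigma_{\alpha\pi/2}$ immediately and strictly for $\xi\neq 1$. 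This replaces your tangent-substitution argument and closes the gap you identified; no appeal to Lemma~\ref{lem:Blunk} is needed to enlarge the sector.

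One further remark: the hypothesis $S(A)\subset\mathbb{C}\setminus\Sigma_\varphi$ is not used at all in the paper's proof for $0<\alpha<1$; it is needed only for the case $1<\alpha<2$ (Theorem~\ref{THM:CN-2}). So your fallback via \eqref{eqn:Resolvent} and Lemma~\ref{R-bound-analytic} is unnecessary here.
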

\begin{proof}$\,\,$
It suffices to prove that the family of operators
$\big\{\tau^{-\alpha}\delta(\xi)(\tau^{-\alpha}\delta(\xi)-A)^{-1}:\xi\in{\mathbb D}'\big\}$ is $R$-bounded.
In fact, for $\xi=e^{\mathrm{i}\theta}$, $\theta\in(0,2\pi)$, we have
\begin{align*}
\frac{\delta(e^{\mathrm{i}\theta})}{\tau^\alpha }=\frac{2^\alpha[\sin(\theta/2)]^\alpha}{\tau^\alpha \rho(\theta)}
e^{\mathrm{i}(-\frac{\alpha}{2}\pi  +\frac{\alpha}{2}\theta-\psi(\theta))} ,
\end{align*}
where the functions $\rho(\theta)$ and $\psi(\theta)$ are defined respectively by
\begin{align}
&\rho(\theta)
:=\sqrt{(1-\tfrac{\alpha}{2})^2+\tfrac{\alpha^2}{4}+\alpha(1-\tfrac{\alpha}{2})\cos\theta} ,
\end{align}
and
\begin{align}
&\psi(\theta)
:=\arg\Big(1-\frac{\alpha}{2}+\frac{\alpha}{2}\cos\theta+\mathrm{i} \frac{\alpha}{2}\sin\theta\Big)
=\arctan\frac{\frac{\alpha}{2}\sin\theta}{1-\frac{\alpha}{2}+\frac{\alpha}{2}\cos\theta}.\label{eqn:Psi}
\end{align}
It is straightforward to compute
$$
\frac{\alpha}{2}-\psi'(\theta)
=\frac{\frac{\alpha}{2}(1-\alpha)(1-\frac{\alpha}{2})(1-\cos\theta)}{(1-\frac{\alpha}{2}+\frac{\alpha}{2}\cos\theta)^2+\frac{\alpha^2}{4}\sin^2\theta}
\ge 0 .
$$
Thus $\frac{\alpha}{2}\theta-\psi(\theta)$ is an increasing function of $\theta$, taking values from $0$ to $\alpha \pi$ as $\theta$ changes from
$0$ to $2\pi$. Thus $\tau^{-\alpha}\delta(e^{\mathrm{i}\theta})\in\Sigma_{\alpha\pi/2}$,
and by Lemma \ref{lem:RB-calculus}, the set
$\{(1-\frac\alpha2+\frac\alpha2 \xi)\tau^{-\alpha}\delta(\xi)(\tau^{-\alpha}\delta(\xi)-A)^{-1}:\xi\in{\mathbb D}'\}$ is $R$-bounded.
\qed\end{proof}

Let the function $\psi$ be defined in \eqref{eqn:Psi}, and
$\theta_\varphi\in(0,\pi)$ be the unique root of the equation
\begin{align}\label{Eq:theta_varphi}
\psi(\theta_\varphi)-\frac{\alpha}{2}\theta_\varphi
=\varphi-\frac{\alpha\pi }{2} .
\end{align}
Then we have the following result for the case $1<\alpha<2$.

\begin{theorem}\label{THM:CN-2}
Let $X$ be a UMD space, $1<\alpha<2$,
and let $A$ be an $R$-sectorial operator on $X$ of angle $\alpha\pi/2$ such that $S(A)\subset {\mathbb C}\backslash\Sigma_{\varphi}$
for some $\varphi\in(\alpha\pi/2,\pi)$.
Then, under the condition
\begin{align}\label{condition:rA-CN}
 \tau^\alpha r(A)\le
 \frac{2^\alpha[\sin(\theta_\varphi/2)]^\alpha}{  \rho(\theta_\varphi)} -\epsilon  ,
\end{align}
the scheme \eqref{eqn:CN} satisfies the following discrete maximal regularity
\begin{equation*}
\|(\bar\partial_\tau^\alpha u^n)_{n=1}^N\|_{\ell^p(X)}
+\|(Au^n)_{n=1}^N\|_{\ell^p(X)}\leq c_{p,X}(1+c_R)\|(f^n)_{n=0}^{N}\|_{\ell^p(X)} ,
\end{equation*}
where the constant $c_{p,X}$ depends only on
$\epsilon$, $\varphi$ and $\alpha$
{\rm(}independent of $\tau$ and $A${\rm)}, and
$c_R$ denotes the $R$-bound of the set
$\{zR(z;A) :z\in \Sigma_{\alpha\pi/2}\}$.
\end{theorem}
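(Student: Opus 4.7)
The proof plan closely parallels that of Theorem~\ref{THM:Explicit}, since again the symbol curve may exit the sector $\Sigma_{\alpha\pi/2}$ and we must compensate via the numerical range condition. First I would repeat the generating-function derivation to get
\[
M(\xi) := \bigl(1-\tfrac\alpha2+\tfrac\alpha2\xi\bigr)\,\tau^{-\alpha}\delta(\xi)\bigl(\tau^{-\alpha}\delta(\xi)-A\bigr)^{-1},
\]
and observe that $(1-\tfrac\alpha2+\tfrac\alpha2\xi)\delta(\xi)=(1-\xi)^\alpha$, so the scalar prefactor is uniformly bounded on $\mathbb{D}'$ (in fact its modulus is $\le 1$). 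Setting $\eta(\xi)=\tau^{-\alpha}\delta(\xi)$, it suffices to show that $\{\eta(\xi)R(\eta(\xi);A):\xi\in\mathbb{D}'\}$ is $R$-bounded, together with the corresponding $(1-\xi)(1+\xi)M'(\xi)$ condition required by Theorem~\ref{thm:blunk}.

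Next I would analyze the symbol curve $\Gamma=\{\eta(e^{\mathrm{i}\theta}):\theta\in(0,2\pi)\}$. The identity of Theorem~\ref{THM:CN-1} gives $\arg\eta(e^{\mathrm{i}\theta}) = -\alpha\pi/2 + \alpha\theta/2 - \psi(\theta)$, and for $1<\alpha<2$ the computation $\tfrac\alpha2-\psi'(\theta)<0$ shows that the argument is \emph{decreasing} in $\theta$ on $(0,\pi]$, ranging from $-\alpha\pi/2$ down to $-\pi$ (and symmetrically on $[\pi,2\pi)$). In particular, by the defining equation \eqref{Eq:theta_varphi} for $\theta_\varphi$, the curve leaves $\overline\Sigma_\varphi$ exactly at $\theta=\theta_\varphi$. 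I would also verify that $|\eta(e^{\mathrm{i}\theta})|=\tau^{-\alpha}\cdot 2^\alpha[\sin(\theta/2)]^\alpha/\rho(\theta)$ is monotonically increasing on $(0,\pi]$ (the numerator increases and $\rho(\theta)$ decreases), so that for $\theta\in[\theta_\varphi,\pi]$ the condition \eqref{condition:rA-CN} yields $(1-\epsilon_{\alpha,\varphi})|\eta(e^{\mathrm{i}\theta})|\ge r(A)$ for some $\epsilon_{\alpha,\varphi}>0$.

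Given this, I would split $\Gamma$ into the portion lying in $\Sigma_\phi$ for some $\phi\in(\alpha\pi/2,\varphi]$ furnished by Lemma~\ref{lem:Blunk}, on which $R$-boundedness follows directly from the $R$-sectoriality hypothesis, and the portion outside $\Sigma_\phi$. On the latter, combining the monotonicity above with the $\tau$-invariant geometry of the rescaled curves $\Gamma_0$ and $(1-\epsilon_{\alpha,\varphi})D_0\setminus\Sigma_\varphi$ (exactly as in the Explicit-Euler proof), I obtain a uniform lower bound $\mathrm{dist}(z,\overline{S(A)})\ge C^{-1}\tau^{-\alpha}$ for $z\in\Gamma\setminus\Sigma_\phi$. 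Then \eqref{eqn:Resolvent} gives a uniform bound $|z|\|R(z;A)\|\le C$, and covering $\Gamma\setminus\Sigma_\phi$ by a $\tau$-independent finite number of balls on which Lemma~\ref{R-bound-analytic} applies yields the required $R$-boundedness via Lemma~\ref{lem:RB-calculus}(iii). Multiplying by the bounded scalar $(1-\tfrac\alpha2+\tfrac\alpha2\xi)$ gives $R$-boundedness of $\{M(\xi):\xi\in\mathbb{D}'\}$.

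Finally, for the derivative condition, I would compute as in Section~\ref{sec:CQ} that
\[
(1-\xi)(1+\xi)M'(\xi) = \tfrac\alpha2(1-\xi)(1+\xi)N(\xi) + g(\xi)\bigl(N(\xi)-N(\xi)^2\bigr),
\]
where $N(\xi)=\eta(\xi)R(\eta(\xi);A)$ and $g(\xi)=(1-\tfrac\alpha2+\tfrac\alpha2\xi)(1-\xi)(1+\xi)\delta'(\xi)/\delta(\xi)$. The logarithmic derivative $\delta'/\delta=-\alpha/(1-\xi)-(\alpha/2)/(1-\tfrac\alpha2+\tfrac\alpha2\xi)$ produces, after multiplication by $(1-\tfrac\alpha2+\tfrac\alpha2\xi)(1-\xi)(1+\xi)$, the polynomial expression $-\alpha(1-\tfrac\alpha2+\tfrac\alpha2\xi)(1+\xi)-\tfrac\alpha2(1-\xi)(1+\xi)$, which is bounded on $\overline{\mathbb{D}}$. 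Combined with the $R$-boundedness of $\{N(\xi)\}$ and Lemma~\ref{lem:RB-calculus}, this yields the $R$-boundedness of $\{(1-\xi)(1+\xi)M'(\xi):\xi\in\mathbb{D}'\}$, and Theorem~\ref{thm:blunk} concludes the proof. I expect the main obstacle to be the geometric argument in the second paragraph, namely verifying that the part of the rescaled symbol curve outside $\Sigma_\phi$ stays at a $\tau$-independent positive distance from $\overline{S(A)}$ under the sharp condition \eqref{condition:rA-CN}; the rest is routine.
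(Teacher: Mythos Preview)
Your proposal is correct and follows essentially the same route as the paper's proof: monotonicity of $\tfrac{\alpha}{2}\theta-\psi(\theta)$ for $1<\alpha<2$ to locate the symbol curve $\Gamma$ in $\mathbb{C}\setminus\Sigma_{\alpha\pi/2}$, Lemma~\ref{lem:Blunk} to handle $\Gamma\cap\Sigma_\phi$, monotonicity of $|\delta(e^{\mathrm{i}\theta})|$ combined with \eqref{condition:rA-CN} to obtain the uniform distance estimate on $\Gamma\setminus\Sigma_\phi$, and the ball-covering plus Lemma~\ref{R-bound-analytic} to finish, exactly mirroring Theorem~\ref{THM:Explicit}. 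You are in fact more explicit than the paper on two points: the boundedness of the scalar prefactor $(1-\tfrac\alpha2+\tfrac\alpha2\xi)$ and the verification of the derivative condition for Theorem~\ref{thm:blunk}, both of which the paper leaves implicit (it simply asserts ``it suffices'' to show $R$-boundedness of $\{zR(z;A):z\in\Gamma\}$, relying on the pattern established in Theorems~\ref{thm:Euler} and~\ref{THM:CN-1}).
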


\begin{proof}$\,\,$
If $1<\alpha<2$, then
$$
\frac{\alpha}{2}-\psi'(\theta)
=\frac{\frac{\alpha}{2}(1-\alpha)(1-\frac{\alpha}{2})(1-\cos\theta)}{(1-\frac{\alpha}{2}+\frac{\alpha}{2}\cos\theta)^2+\frac{\alpha^2}{4}\sin^2\theta}
\le 0 .
$$
Hence, $\frac{\alpha}{2}\theta-\psi(\theta)$ is a decreasing function of $\theta$, taking values
from $0$ to $\alpha \pi-2\pi$ as $\theta$ changes from $0$ to $2\pi$. Thus
$\tau^{-\alpha}\delta(e^{\mathrm{i}\theta})\in{\mathbb C}\backslash\Sigma_{\alpha\pi/2}$.
With $\Gamma =\{\tau^{-\alpha} \delta(e^{\mathrm{i}\theta}): \theta\in(0,2\pi)\}$,
it suffices to show that $\{zR(z;A):z\in \Gamma\}$ is $R$-bounded.
Since $\{zR(z;A):z\in \Sigma_{\alpha\pi/2}\}$ is $R$-bounded, by Lemma \ref{lem:Blunk},
$\{zR(z;A):z\in \Gamma\cap \Sigma_{\phi}\}$ is $R$-bounded for some $\phi\in(\alpha\pi/2,\pi)$, where
$\phi$ depends on $c_R$ and $\alpha$. It remains to prove that
$\{zR(z;A):z\in \Gamma \backslash\Sigma_{\phi}\}$ is also $R$-bounded.
However, ${\rm arg}(\tau^{-\alpha}\delta(e^{\mathrm{i}\theta}))\in {\mathbb C}\backslash\Sigma_{\varphi}$ is equivalent to
\begin{equation}\label{eqn:theta-range2}
\theta_\varphi <\theta< 2\pi-\theta_\varphi  ,
\end{equation}
where $\theta_\varphi$ is the unique root of equation \eqref{Eq:theta_varphi}.
Meanwhile, for $ \theta\in(0,\pi)$, $|\delta(e^{\mathrm{i}\theta})|=2^\alpha[\sin(\theta/2)]^\alpha/\rho(\theta)=2^\alpha \sin(\theta/2)^{\alpha-1}\cdot\sin(\theta/2)/\rho(\theta)$ is
monotonically increasing. Hence, for any $\theta$ satisfying \eqref{eqn:theta-range2}, we have
$$
\bigg|\frac{\delta(e^{\mathrm{i}\theta})}{\tau^\alpha}\bigg|
\ge \frac{2^\alpha[\sin(\theta_\varphi/2)]^\alpha}{ \rho(\theta_\varphi)\tau^\alpha }   .
$$
If \eqref{condition:rA-CN} is satisfied then for some positive  constant $\epsilon_{\alpha,\varphi}$,
$$
 (1-\epsilon_{\alpha,\varphi})\bigg|\frac{\delta(e^{\mathrm{i}\theta})}{\tau^\alpha}\bigg|\ge r(A).
$$
By repeating the argument in Theorem \ref{THM:Explicit},
we deduce ${\rm dist}(z,\overline{S(A)})\ge \tau^{-\alpha} C^{-1}$ for $z\in \Gamma\backslash\Sigma_{\phi}$,
where $C$ is some constant which may depend on
 $\epsilon$, $\alpha$, $\varphi$ and $\phi$, but is independent of $\tau$.
Hence, there exists a finite number of balls $B(z_j,\rho)$ of radius
$\rho=\frac{1}{4}\tau^{-\alpha} C^{-1}$, $z_j\in \Gamma$, which can cover
$\Gamma \backslash\Sigma_{\phi}$, and the number of balls is bounded by a constant which depends only
on $\epsilon$, $\alpha$, $\varphi$ and $\phi$, independent of $\tau$ and $A$.
By Lemma \ref{R-bound-analytic}, for each ball $B(z_j,\rho)$, $\{zR(z;A):z\in B(z_j,\rho)\}$
is $R$-bounded and its $R$-bound is at most
$$
2\sup_{z\in B(z_j,\rho)}|z|\|R(z;A)\|_{\mathcal{B}(X)}
\le 2\sup_{z\in B(z_j,\rho)}|z|\, {\rm dist}(z,\overline{S(A)})^{-1}
\le C .
$$
Then Lemma \ref{lem:RB-calculus} (iii) implies that
$\{zR(z;A):z\in \Gamma \backslash\Sigma_{\phi}\}$ is also $R$-bounded.
\qed\end{proof}

\section{Inhomogeneous initial condition}\label{sec:inhomo}
In this section, we consider maximal $\ell^p$-regularity for the problem
\begin{equation}\label{eqn:inh}
    \partial_t^\alpha u(t) = A u(t) ,  \quad t>0
\end{equation}
with nontrivial initial conditions:
\begin{align} \label{eqn:inh}
\begin{aligned}
&u(0)=v,\qquad &&\quad \mbox{(for $0<\alpha<1$)},\\
&u(0)=v ,\,\,\, \partial_tu(0)= w ,&&\quad \mbox{(for $1<\alpha<2$)}.
\end{aligned}
\end{align}
We focus on the BE scheme since other schemes can be analyzed similarly.
For \eqref{eqn:inh}, the BE scheme reads \cite{JinLazarovZhou:2016sisc,JinLiZhou:2016CN}: with $u^0=v $, find $u^n$ such that
\begin{align}\label{Euler-inh}
\begin{aligned}
   \bar\partial_\tau^\alpha (u-v)^n & = A u^n ,\quad n=1,2,\dots \quad \mbox{(for $0<\alpha<1$)},\\
   \bar\partial_\tau^\alpha (u-v-t w)^n &= A u^n ,\quad n=1,2,\dots \quad \mbox{(for $1<\alpha<2$)},
\end{aligned}
\end{align}
where $\bar\partial_\tau^\alpha $ denotes the BE convolution quadrature \eqref{eqn:BE}.

We shall need the scaled $L^p$-norm and weak $L^p$-norm (cf. \cite[section 1.3]{BL1976})
\begin{align}\label{Lpnorm}
&\|(u^n)_{n=1}^N\|_{L^p(X)}
:=\bigg(\tau\sum_{n=1}^N\|u^n\|_X^p\bigg)^{\frac{1}{p}} ,\\
&\|(u^n)_{n=1}^N\|_{L^{p,\infty}(X)}
:=\sup_{\lambda>0} \lambda  |\{n\ge 1: \|u^n\|_X>\lambda \}|^{\frac{1}{p}}\tau^{\frac{1}{p}}.
\end{align}

The main result of this section is the following theorem.

\begin{theorem}\label{thm:Euler-inh}
Let $X$ be a Banach space, $0<\alpha<1$, and let $A$ be a sectorial operator on $X$ of angle $\alpha\pi/2$.
Then the BE scheme
\eqref{Euler-inh} has the following maximal $\ell^p$-regularity
\begin{align*}
&\|(\bar\partial_\tau^\alpha u^n)_{n=1}^N\|_{L^p(X)}+ \|(Au^n)_{n=1}^N\|_{L^p(X)}
\le c_{p}\|v\|_{(X,D(A))_{1-\frac{1}{p\alpha},p}},
&& p\in(1/\alpha,\infty], 
\\
&\|(\bar\partial_\tau^\alpha u^n)_{n=1}^N\|_{L^{p,\infty}(X)}+ \|(Au^n)_{n=1}^N\|_{L^{p,\infty}(X)} \le c_{p}\|v\|_{X},
&& p=1/\alpha,  
\\
&\|(\bar\partial_\tau^\alpha u^n)_{n=1}^N\|_{L^{p}(X)}
+ \|(Au^n)_{n=1}^N\|_{L^{p}(X)}
\le c_{p}\|v\|_{X},
&&  p\in[1,1/\alpha) , 
\end{align*}
where the constant $c_{p}$ depends on the bound of the set of operators
$\{zR(z;A) :z\in \Sigma_{\alpha\pi/2}\}$,
independent of $N$, $\tau$ and $A$.
\end{theorem}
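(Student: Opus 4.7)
My plan is to derive an explicit contour representation of $u^n$ from the generating function of the scheme, use the sectoriality of $A$ to extract the pointwise smoothing bound $\|Au^n\|_X\le C t_n^{-\alpha}\|v\|_X$, and then convert this estimate into the three $p$-regimes via a weak-type inequality combined with real interpolation.

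First, applying the generating-function manipulation from the proof of Theorem~\ref{thm:Euler} to the sequence $w^n:=u^n-v$ (with $w^0=0$), which satisfies $\bar\partial_\tau^\alpha w^n = Au^n$ for $n\ge 1$, yields
\[
(\tau^{-\alpha}(1-\xi)^\alpha-A)\,w(\xi) = \tau^{-\alpha}(1-\xi)^{\alpha-1}\xi\,v .
\]
Extracting the coefficient of $\xi^n$ through the Cauchy integral formula, then making the substitution $\xi=e^{-z\tau}$ and deforming the resulting vertical segment into a truncated Hankel-type contour $\Gamma_\tau$ lying inside the strip $|\operatorname{Im}(z)|<\pi/\tau$, I obtain the kernel representation
\[
Au^n = \frac{1}{2\pi\mathrm{i}}\int_{\Gamma_\tau} e^{zt_n}\,\tilde\delta_\tau(z)^{\alpha-1}\,A\bigl(\tilde\delta_\tau(z)^\alpha-A\bigr)^{-1}v\,dz ,
\]
where $\tilde\delta_\tau(z):=(1-e^{-z\tau})/\tau$. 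Choosing $\Gamma_\tau$ so that $|\tilde\delta_\tau(z)|\simeq|z|$, $\operatorname{Re}(zt_n)\le-c|z|t_n$, and $\tilde\delta_\tau(z)^\alpha\in\Sigma_{\alpha\pi/2}$ on the contour, the sectoriality estimate $\|A(\tilde\delta_\tau(z)^\alpha-A)^{-1}\|_{\mathcal{B}(X)}\le C$ together with the change of variable $r=|z|t_n$ delivers $\|Au^n\|_X\le C\,t_n^{-\alpha}\|v\|_X$. When $v\in D(A)$, commuting $A$ across the resolvent so that it acts on $Av$ and repeating the estimate yields the uniform endpoint bound $\|Au^n\|_X\le C\,\|Av\|_X$, independent of $n$.

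These two estimates say exactly that the solution map $v\mapsto(Au^n)_{n=1}^N$ is simultaneously bounded from $X$ into the weak Lebesgue space $L^{1/\alpha,\infty}(X)$ (since $t\mapsto t^{-\alpha}$ lies in weak $L^{1/\alpha}(\mathbb{R}^+)$ with a $\tau$-independent norm) and from $D(A)$ into $L^\infty(X)$. The case $p=1/\alpha$ follows directly from the former, while for $p>1/\alpha$ the classical real-interpolation identity $(L^{1/\alpha,\infty}(X),L^\infty(X))_{\theta,p}=L^p(X)$ for Lorentz spaces, applied at $\theta=1-1/(p\alpha)\in(0,1)$, produces the bound in the interpolation space $(X,D(A))_{1-1/(p\alpha),p}$. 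For $p<1/\alpha$, the strong $L^p$ bound with $\|v\|_X$ is obtained by directly summing $\tau\sum_{n=1}^N(n\tau)^{-\alpha p}$, which is controlled on the fixed time window $N\tau$. The estimate for $\bar\partial_\tau^\alpha u^n$ is then immediate from the scheme identity $\bar\partial_\tau^\alpha(u-v)^n=Au^n$.

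The main obstacle is the contour deformation in the first step: unlike the continuous case where $z^\alpha$ parametrizes a sector for free, the discrete symbol $\tilde\delta_\tau(z)^\alpha$ is periodic in $\operatorname{Im}(z)$ with period $2\pi/\tau$ and behaves like $z^\alpha$ only when $|z|\tau\ll 1$. One must therefore truncate $\Gamma_\tau$ at $|z|\sim 1/\tau$, verify uniformly in $\tau$ that $\tilde\delta_\tau(z)^\alpha$ stays in the sectoriality sector of $A$ on the truncated contour, and control the truncation arcs as a lower-order perturbation; this is the classical Lubich-type analysis for convolution quadratures. Once the $\tau$-uniform kernel estimates are in hand, the three-regime $L^p$ conversion reduces to a standard vector-valued Marcinkiewicz interpolation.
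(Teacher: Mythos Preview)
Your proposal is correct and follows essentially the same route as the paper: derive the generating-function identity, pass to a contour integral via $\xi=e^{-\tau z}$, deform to a truncated Hankel contour in the strip $|\operatorname{Im} z|<\pi/\tau$, obtain the two pointwise bounds $\|Au^n\|_X\le Ct_n^{-\alpha}\|v\|_X$ and $\|Au^n\|_X\le C\|Av\|_X$, and finish by real interpolation between $L^{1/\alpha,\infty}$ and $L^\infty$. One minor slip: the equation you write is the one satisfied by $u(\xi)=\sum_{n\ge1}u^n\xi^n$, not by $w(\xi)$ (for $w^n=u^n-v$ one has $(\tau^{-\alpha}(1-\xi)^\alpha-A)w(\xi)=\tfrac{\xi}{1-\xi}Av$ instead); since your subsequent contour formula for $Au^n$ is the one coming from $u(\xi)$, this does not affect the argument.
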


\begin{proof}$\,\,$
By multiplying both sides of \eqref{Euler-inh} by $\xi^n$ and summing over $n$, we have
\begin{equation*}
  \sum_{n=1}^\infty \xi^n \bar\partial_\tau^\alpha (u-v)^n - \sum_{n=1}^\infty Au^n\xi^n = 0 .
\end{equation*}
Let $u(\xi)=\sum_{n=1}^\infty u^n\xi^n$. Then by repeating
the argument in the proof of Theorem \ref{thm:Euler}, we have (with $\delta(\xi)=1-\xi$)
\begin{equation*}
Au(\xi) = A(\tau^{-\alpha}\delta(\xi)^\alpha-A) ^{-1} \tau^{-\alpha}\delta(\xi)^\alpha\frac{\xi}{1-\xi} v ,
\end{equation*}
where the right-hand side is an analytic function in the unit disk.
For $\rho\in(0,1)$, the Cauchy's integral formula and the change of variable $\xi=e^{-\tau z}$ yield
\begin{align}\label{Aun}
Au^n
&=\frac{1}{2\pi\mathrm{i}}\int_{|\xi|=\rho} Au(\xi) \xi^{-n-1} d\xi
=\frac{\tau}{2\pi\rm i}\int_{\Gamma_\rho^\tau} Au(e^{-\tau z}) e^{t_{n} z} d z
=\frac{\tau}{2\pi\rm i}\int_{\Gamma_\rho^\tau} K(z)v  d z ,\nonumber
\end{align}
where the kernel function $K(z)$ is defined by
\begin{equation*}
  K(z)=  e^{t_{n} z} A(\tau^{-\alpha}\delta(e^{-\tau z})^\alpha-A) ^{-1}\tau^{-\alpha}\delta(e^{-\tau z})^\alpha\frac{e^{-\tau z}}{1-e^{-\tau z}},
\end{equation*}
and $\Gamma_\rho^\tau=\{ a+{\rm i} y: y\in(-\pi/\tau,\pi/\tau)\}$ with $a=\tau^{-1}\ln\frac{1}{\rho}>0$.
Since $zR(z;A)$ is bounded for $z\in \Sigma_{\alpha\pi/2}$, $zR(z;A)$ is also bounded for $z\in \Sigma_{\alpha\pi/2+\varepsilon}$ (the angle can be slightly self-improved (cf. \cite[Theorem 5.2 (c)]{Pazy:1983}). Then a standard perturbation argument shows that
there exists $\theta_\varepsilon>0$ (depending on $\varepsilon$) such that
$\delta(e^{-\tau z})^\alpha\in \Sigma_{\alpha\pi/2+\varepsilon}$ when $z\in \Sigma_{\frac{\pi}{2}+\theta_\varepsilon}$.
Let
\begin{align*}
&\Gamma_{\theta_\varepsilon,\kappa}^\tau=
\Big\{\rho e^{{\rm i}\theta_\varepsilon}: \kappa\le \rho\le \frac{\pi}{\tau\sin\theta_\varepsilon}\Big\}
\bigcup \Big\{\kappa e^{{\rm i}\varphi}: -\theta_\varepsilon \le \varphi\le  \theta_\varepsilon\Big\} ,\\
& \Gamma_{\pm}^\tau =\Big\{x \pm {\rm i} \pi/\tau:
\frac{\pi\cos\theta_\varepsilon}{\tau\sin\theta_\varepsilon}
<x< \tau^{-1}\ln\frac{1}{\rho} \Big\} ,
\end{align*}
where $\Gamma_{\theta_\varepsilon,\kappa}^\tau$ is oriented upwards
and $\Gamma_{\theta_\varepsilon,\kappa}^\tau$ is oriented rightwards,
and $0<\kappa<\tau^{-1}\ln\frac{1}{\rho}$.
Then the function
$
K(z) v
$
is analytic in $z$ in the region enclosed by $\Gamma_{\theta_\varepsilon,\kappa}^\tau
$, $\Gamma_{\pm}^\tau $ and $\Gamma_\rho^\tau$. Since the integrals on
$\Gamma_{+}^\tau$ and $\Gamma_{-}^\tau$ cancel each other due to the $2\pi {\rm i}$-periodicity
of the integrand, the Cauchy's theorem yields
\begin{align*}
Au^n = \frac{\tau}{2\pi\rm i}\int_{\Gamma_\rho^\tau}  K(z)v d z = \frac{\tau}{2\pi\rm i} \int_{\Gamma_{\theta_\varepsilon,\kappa}^\tau}
K(z)v d z .
\end{align*}
Then by choosing $\kappa=t_n^{-1}$ in the contour $\Gamma_{\theta_\varepsilon,\kappa}^\tau$, we deduce
\begin{align*}
\|Au^n\|_X
&\leq
c\Big(\int_{\kappa t_n}^{\frac{\pi}{\tau\sin\theta_\varepsilon}}  s^{-1}
e^{s \cos\theta_\varepsilon} d s
+\int_{-\theta_\varepsilon}^{\theta_\varepsilon}
e^{t_{n} \kappa\cos\varphi}d \varphi\Big)\|Av\|_X
\le c\|Av\|_X.
\end{align*}
Similarly, one can show
\begin{align*}
\|u^n\|_X\le c\|v\|_X  \quad \mbox{and}\quad \|Au^n\|_X\le ct_n^{-\alpha} \|v\|_X.
\end{align*}
This last estimate immediately implies
the third assertion of Theorem \ref{thm:Euler-inh}.
Now for $p\in(1/\alpha,\infty]$, we define $E_\tau:X\rightarrow L^\infty({\mathbb R}_+,X)$
denote the operator which maps $v$ to the piecewise constant function
$$
E_\tau v= u_n\quad\forall\, t\in (t_{n-1},t_n], \quad n=1,2,\dots
$$
The preceding two estimates imply
\begin{align}
&\|E_\tau v\|_{L^\infty({\mathbb R}_+,D(A))}\le c\|v\|_{D(A)} , \label{EtauvLinfty} \\
&\|E_\tau v\|_{L^{1/\alpha,\infty}({\mathbb R}_+,D(A))}\le c  \|v\|_X .  \label{EtauvLinfty2}
\end{align}
The estimate \eqref{EtauvLinfty} implies the first assertion of Theorem \ref{thm:Euler-inh}
in the case $p=\infty$, and the estimate \eqref{EtauvLinfty2} implies the second assertion of Theorem \ref{thm:Euler-inh}.
The real interpolation of the last two estimates yields
\begin{align*}
&\|E_\tau v\|_{(L^{1/\alpha,\infty}({\mathbb R}_+,D(A)),L^\infty({\mathbb R}_+,D(A)))_{1-\frac{1}{\alpha p},p}}
\le c\|v\|_{(X,D(A))_{1-\frac{1}{\alpha p},p}} ,
\,\,\, \forall\, p \in(\alpha^{-1}, \infty) .
\end{align*}
Since $(L^{1/\alpha,\infty}({\mathbb R}_+,D(A)),L^\infty({\mathbb R}_+,D(A)))_{1-\frac{1}{\alpha p},p}=L^p({\mathbb R}_+,D(A))$ \cite[Theorem 5.2.1]{BL1976}, this implies the first assertion of Theorem \ref{thm:Euler-inh} in the case $p\in(1/\alpha,\infty)$.
\qed\end{proof}

\begin{remark}
The proof shows that in the absence of a source term $f$, the maximal $\ell^p$-regularity of \eqref{Euler-inh}
only requires the sectorial property of $A$, rather than the $R$-sectorial property.
The general case (with nonzero source and nonzero initial data) is a linear combination of \eqref{eqn:fde} and \eqref{eqn:inh}.
\end{remark}

\begin{remark}
We have focused our discussions on the Caputo fractional derivative, since it allows
specifying the initial condition as usual, and thus is very popular among practitioners.
In the Riemann-Liouville case, generally it requires integral type initial condition(s)
\cite{KilbasSrivastavaTrujillo:2006}, for which the physical interpretation seems unclear.
\end{remark}

In the proof of Theorem \ref{thm:Euler-inh}, we first prove two end-point cases, $p=1/\alpha$ and $p=\infty$.
Then we use real interpolation method for the case $1/\alpha<p<\infty$.
The real interpolation method also holds for $0<p<1$ (\cite[Theorem 5.2.1]{BL1976}).
Thus, we have the following theorem in the case $1<\alpha<2$. The proof is omitted, since it
is almost identical with the proof of Theorem \ref{thm:Euler-inh}.

\begin{theorem}\label{thm:Euler-inh2}
Let $X$ be a Banach space, $1<\alpha<2$, and let $A$ be a sectorial operator on $X$ of angle $\alpha\pi/2$.
Then the BE scheme \eqref{Euler-inh} has the following maximal $\ell^p$-regularity:
\begin{align*}
&\|(\bar\partial_\tau^\alpha u^n)_{n=1}^N\|_{L^p(X)}+ \|(Au^n)_{n=1}^N\|_{L^p(X)} \\
&\le
\left\{
\begin{aligned}
&c_{p} (\|v\|_{(X,D(A))_{1-\frac{1}{p\alpha},p}}+\|w\|_{X}) ,
&&\mbox{for}\,\,\, p\in\Big[1,\frac{1}{\alpha-1}\Big), \\
&c_{p} (\|v\|_{(X,D(A))_{1-\frac{1}{p\alpha},p}}+\|w\|_{(X,D(A))_{1-\frac{1}{\alpha}-\frac{1}{p\alpha},p}}) ,
&&\mbox{for}\,\,\, p\in\Big(\frac{1}{\alpha-1},\infty\Big],
\end{aligned}
\right.
\end{align*}
and
\begin{align*}
\|(\bar\partial_\tau^\alpha u^n)_{n=1}^N\|_{L^{p,\infty}(X)}&+ \|(Au^n)_{n=1}^N\|_{L^{p,\infty}(X)} \\
&\le
c_{p} (\|v\|_{(X,D(A))_{1-\frac{1}{p\alpha},p}}+\|w\|_{X}) ,
&\mbox{for}\,\,\, p=\frac{1}{\alpha-1} ,
\end{align*}
where the constant $c_{p}$ depends on the bound of the set of operators
$\{zR(z;A) :z\in \Sigma_{\alpha\pi/2}\}$,
independent of $N$, $\tau$ and $A$.
\end{theorem}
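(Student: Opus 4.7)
The plan is to mimic the proof of Theorem \ref{thm:Euler-inh} step by step, treating the contributions of the two initial data $v$ and $w$ separately, and then interpolating. First I would multiply the scheme \eqref{Euler-inh} by $\xi^n$ and sum over $n\ge 1$. Since the generating function of the sequence $(v+t_n w)_{n=0}^\infty$ is
\begin{equation*}
\frac{v}{1-\xi}+\frac{\tau w\,\xi}{(1-\xi)^2},
\end{equation*}
the convolution rule together with $\sum_{j}b_j\xi^j=(1-\xi)^\alpha$ gives (with $\delta(\xi)=1-\xi$)
\begin{equation*}
Au(\xi)=A\bigl(\tau^{-\alpha}\delta(\xi)^\alpha-A\bigr)^{-1}\tau^{-\alpha}\delta(\xi)^\alpha
\Bigl(\frac{v}{1-\xi}+\frac{\tau w\,\xi}{(1-\xi)^2}\Bigr),
\end{equation*}
which splits the solution into a $v$-part $u^n_{(v)}$ and a $w$-part $u^n_{(w)}$.

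Next, for each part I would apply the Cauchy integral formula on the circle $|\xi|=\rho<1$ and change variables $\xi=e^{-\tau z}$, obtaining a contour integral along $\Gamma_\rho^\tau=\{a+\mathrm{i}y:|y|<\pi/\tau\}$, $a=\tau^{-1}\ln(1/\rho)$. Using that $A$ is sectorial of angle $\alpha\pi/2$ and self-improvement of sectoriality, $\delta(e^{-\tau z})^\alpha$ falls inside a sector on which $(\tau^{-\alpha}\delta(e^{-\tau z})^\alpha-A)^{-1}$ is controlled, so the contour can be deformed to the truncated Hankel contour $\Gamma_{\theta_\varepsilon,\kappa}^\tau$ used in the proof of Theorem \ref{thm:Euler-inh}, with the horizontal pieces $\Gamma_\pm^\tau$ cancelling by $2\pi\mathrm{i}/\tau$-periodicity. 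Choosing $\kappa=t_n^{-1}$, the $v$-part yields the same pointwise bounds as in Theorem \ref{thm:Euler-inh}, namely $\|Au^n_{(v)}\|_X\le c\|Av\|_X$ and $\|Au^n_{(v)}\|_X\le c t_n^{-\alpha}\|v\|_X$; the analogous bounds hold for $\bar\partial_\tau^\alpha u^n_{(v)}$.

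For the $w$-part, the extra factor $\tau\xi/(1-\xi)^2$ becomes, after the change of variable, $z^{-2}$ times a bounded factor, so the same contour argument produces one extra positive power of $t_n$ compared with the $v$-part. Concretely, with $\kappa=t_n^{-1}$ the same estimates give $\|Au^n_{(w)}\|_X\le c\|Aw\|_X$ but also $\|Au^n_{(w)}\|_X\le c t_n^{1-\alpha}\|w\|_X$, and likewise for $\bar\partial_\tau^\alpha u^n_{(w)}$. In operator form, the solution operators $E_\tau^{(v)}:v\mapsto(u^n_{(v)})_n$ and $E_\tau^{(w)}:w\mapsto(u^n_{(w)})_n$ therefore satisfy $E_\tau^{(v)}\in\mathcal{B}(D(A);L^\infty(\mathbb{R}_+;D(A)))\cap\mathcal{B}(X;L^{1/\alpha,\infty}(\mathbb{R}_+;D(A)))$, while (since $t^{1-\alpha}\in L^{1/(\alpha-1),\infty}(\mathbb{R}_+)$) $E_\tau^{(w)}\in\mathcal{B}(D(A);L^\infty(\mathbb{R}_+;D(A)))\cap\mathcal{B}(X;L^{1/(\alpha-1),\infty}(\mathbb{R}_+;D(A)))$. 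The endpoint $p=\infty$ estimates give the second assertion of the theorem at $p=\infty$; the weak-type endpoint estimates produce the stated weak $L^{1/(\alpha-1)}$-bound at $p=1/(\alpha-1)$ (noting $1/(\alpha-1)>1/\alpha$, so the $v$-term is already a strong bound there).

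Finally, I would apply the real interpolation identity $(L^{q,\infty}(\mathbb{R}_+;D(A)),L^\infty(\mathbb{R}_+;D(A)))_{1-1/(pq),p}=L^p(\mathbb{R}_+;D(A))$ from \cite[Theorem 5.2.1]{BL1976}, with $q=1/\alpha$ for $E_\tau^{(v)}$ and $q=1/(\alpha-1)$ for $E_\tau^{(w)}$, to obtain the two strong $L^p$ bounds for $p$ in each of the two ranges; the case $p\in[1,1/(\alpha-1))$ follows from the fact that weak $L^{1/(\alpha-1)}$ embeds into $L^p$ locally in time (or equivalently by real interpolation with $L^p$ for small $p$). The main obstacle, and the only real deviation from the proof of Theorem \ref{thm:Euler-inh}, is the careful bookkeeping for the $w$-contribution: one must verify that the $\tau\xi/(1-\xi)^2$ factor translates under $\xi=e^{-\tau z}$ into a factor $\sim z^{-2}$ on the Hankel contour (so that an extra $t_n$ appears after scaling) and identify the resulting temporal decay rate $t_n^{1-\alpha}$ as the one that shifts the critical integrability index from $1/\alpha$ to $1/(\alpha-1)$.
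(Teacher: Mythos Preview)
Your outline matches the paper's own strategy (the paper simply says the proof is ``almost identical'' to that of Theorem~\ref{thm:Euler-inh}), and the splitting into a $v$-part and a $w$-part via generating functions, followed by contour deformation and real interpolation, is exactly right. There is, however, a genuine gap in how you handle the $w$-contribution, and it causes you to miss the interpolation index stated in the theorem.

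First, the claim that ``the same estimates give $\|Au^n_{(w)}\|_X\le c\|Aw\|_X$'' is not what the contour argument produces directly. With $w\in D(A)$ and the extra factor $\sim z^{-2}$, the Hankel contour with $\kappa=t_n^{-1}$ yields $\|Au^n_{(w)}\|_X\le c\,t_n\|Aw\|_X$, which \emph{grows}; it is only by combining this with the $X$-bound $\|Au^n_{(w)}\|_X\le c\,t_n^{1-\alpha}\|w\|_X$ that one obtains $\|Au^n_{(w)}\|_X\le c\min(t_n\|Aw\|_X,\,t_n^{1-\alpha}\|w\|_X)\le c\,\|w\|_X^{1/\alpha}\|Aw\|_X^{1-1/\alpha}$. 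Thus the genuine $L^\infty$-endpoint for the $w$-part is
\[
E_\tau^{(w)}:(X,D(A))_{1-\frac{1}{\alpha},\infty}\longrightarrow L^\infty(\mathbb{R}_+;D(A)),
\]
not $D(A)\to L^\infty$. Second, even granting your endpoint $D(A)\to L^\infty$, interpolating it against $X\to L^{1/(\alpha-1),\infty}$ gives the parameter $\theta=1-\frac{q}{p}=1-\frac{1}{p(\alpha-1)}$ (note: your formula $1-\frac{1}{pq}$ is a slip), which does \emph{not} equal the theorem's $1-\frac{1}{\alpha}-\frac{1}{p\alpha}$ except at the critical value $p=\frac{1}{\alpha-1}$; for $p>\frac{1}{\alpha-1}$ your index is strictly larger, so your bound demands more regularity on $w$ than the statement allows. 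The fix is to take the correct endpoint $(X,D(A))_{1-1/\alpha,\infty}\to L^\infty$ and then apply the reiteration theorem: with $\eta=1-\frac{1}{p(\alpha-1)}$ one gets
\[
\bigl(X,(X,D(A))_{1-\frac{1}{\alpha},\infty}\bigr)_{\eta,p}=(X,D(A))_{\eta(1-\frac{1}{\alpha}),\,p}=(X,D(A))_{1-\frac{1}{\alpha}-\frac{1}{p\alpha},\,p},
\]
which is exactly the space in the statement. The remaining cases ($p<\frac{1}{\alpha-1}$ and $p=\frac{1}{\alpha-1}$) then follow from the pointwise bound $c\,t_n^{1-\alpha}\|w\|_X$ alone, just as in Theorem~\ref{thm:Euler-inh}. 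The paper's remark that ``the real interpolation method also holds for $0<p<1$'' concerns the $v$-part, where the weak endpoint exponent $1/\alpha$ is now below~$1$.
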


\section{Examples and application to error estimates}\label{sec:example}

In this section, we present a few examples of fractional evolution equations which possess the maximal $L^p$-regularity,
and investigate conditions under which the time-stepping schemes in Sections \ref{sec:CQ}-\ref{sec:CN} satisfy the maximal $\ell^p$-regularity.

\begin{example}\label{Exp-continuous}
{\upshape ({\bf Continuous problem})
Consider the following time fractional parabolic equation in a bounded smooth domain
$\Omega\subset{\mathbb R}^d$ ($d\ge 1$):
\begin{align}\label{eqn:frac-continuous}
    \left\{
    \begin{aligned}
    &\partial_t^\alpha u(x,t)  = \Delta u(x,t) + f(x,t)
    &&\mbox{for}\,\,\,(x,t)\in \Omega\times(0,T) ,\\
    &u(x,t)   = 0 &&\mbox{for}\,\,\,(x,t)\in\partial\Omega\times(0,T) ,\\
    &u(x,0)   = 0 &&\mbox{for}\,\,\,x\in \Omega ,\quad \mbox{if\,\,\,$0<\alpha<1$,}\\
    &u(x,0) =\partial_t u(x,0)   = 0 &&\mbox{for}\,\,\,x\in \Omega ,\quad \mbox{if\,\,\,$1<\alpha<2$,}
    \end{aligned}
    \right.
\end{align}
where $T>0$ is given and $\Delta$ denotes the Laplacian operator.
In the appendix, we show that the $L^q$ realization $\Delta_q$ in
$X=L^q(\Omega)$ of $\Delta$ is an $R$-sectorial operator in $X$ with angle $\theta\in(0,\pi)$, and
that $\Delta_q v$ coincides with $\Delta v$ in the domain $D(\Delta_q)$ of $\Delta_q$:
\begin{align}\label{Deltaqv=Deltav0}
\Delta_q v=\Delta v,
\quad\forall\, v\in D(\Delta_q) ,\quad\forall\, 1<q<\infty .
\end{align}
Thus Theorem \ref{thm:maximal-fde} implies that the solution $u_q$ of
\begin{align}\label{eqn:frac-continuous-p}
    \left\{
    \begin{aligned}
    &\partial_t^\alpha u_q   = \Delta_q u_q  + f ,\\
    &u_q(\cdot,0)   = 0 &&\mbox{if\,\,\,$0<\alpha<1$,}\\
    &u_q(\cdot,0) =\partial_t u_q(\cdot,0)   = 0 &&\mbox{if\,\,\,$1<\alpha<2$,}
    \end{aligned}
    \right.
\end{align}
satisfies $u_q(\cdot,t)\in D(\Delta_q)$ for almost all $t \in\mathbb{R}_+$ and
\begin{align}\label{stinuous-p}
\begin{aligned}
&\quad\|\partial_t^\alpha u_q \|_{L^p(0,T;L^q(\Omega))}
  + \|\Delta_q u_q\|_{L^p(0,T;L^q(\Omega))} \\
&\le \|\partial_t^\alpha u_q \|_{L^p({\mathbb R}_+;L^q(\Omega))}
  + \|\Delta_q u_q\|_{L^p({\mathbb R}_+;L^q(\Omega))} \\
&\le c_{p,X}\|f\|_{L^p({\mathbb R}_+;L^q(\Omega))} \\
&= c_{p,X}\|f\|_{L^p(0,T;L^q(\Omega))},\qquad
  \forall\, 1<p,q<\infty .
\end{aligned}
\end{align}

In view of \eqref{Deltaqv=Deltav0}, we shall denote $(\Delta_q,D(\Delta_q))$ by $(\Delta,D_q(\Delta))$ below.
Then \eqref{Deltaqv=Deltav0}-\eqref{stinuous-p} imply that
for any given $1<p,q<\infty$ and $f\in L^p(0,T;L^q(\Omega))$, problem \eqref{eqn:frac-continuous} has a unique solution
$u\in L^p(0,T;D_q(\Delta))\cap W^{1,p}(0,T;L^q(\Omega))$  satisfying the maximal regularity
\begin{align*}
\|\partial_t^\alpha u \|_{L^p(0,T;L^q(\Omega))}
  + \|\Delta u\|_{L^p(0,T;L^q(\Omega))}
\le
c_{p,X}\|f\|_{L^p(0,T;L^q(\Omega))} .
\end{align*}
}\end{example}

\begin{example}
{\upshape ({\bf Time discretization})
Since the Dirichlet Laplacian $\Delta:D_q(\Delta)\rightarrow L^q(\Omega)$ defined in Example
\ref{eqn:frac-continuous} is $R$-sectorial of angle $\theta$ for all $\theta\in(0,\pi)$,
Theorems \ref{thm:Euler}, \ref{thm:SBD} and \ref{thm:L1}
imply that the time (semi-)discrete solutions given by the backward Euler,
BDF2 and L1 scheme satisfy
the following maximal $\ell^p$-regularity:
\begin{equation}\label{Max-reg-Laplacian}
  \|(\bar\partial_\tau^\alpha u^n)_{n=1}^N\|_{\ell^p(L^q(\Omega))}+ \|(\Delta u^n)_{n=1}^N\|_{\ell^p(L^q(\Omega))}\leq c_{p,q}\|(f^n)_{n=0}^N\|_{\ell^p(L^q(\Omega))} .
\end{equation}
By Theorem \ref{THM:CN-1}, the fractional Crank--Nicolson solution also satisfies
\eqref{Max-reg-Laplacian} when $0<\alpha<1$. Since $\Delta$ is self-adjoint and has an unbounded
spectrum, it follows that $r(\Delta)=\infty$, so the conditions of Theorems \ref{THM:Explicit}
and \ref{THM:CN-2} cannot be satisfied.
}
\end{example}

\begin{example}\label{Exp-continuous}
{\upshape({\bf Space-time fractional PDE})
Consider the following space-time nonlocal parabolic equation in ${\mathbb R}^d$ ($d\ge 1$):
\begin{align}\label{eqn:Laplace-half}
    \left\{
    \begin{aligned}
    &\partial_t^\alpha u(x,t)  =-(-\Delta)^\frac{1}{2} u(x,t) + f(x,t)
    &&\mbox{for}\,\,\,(x,t)\in{\mathbb R}^d\times{\mathbb R}^+,\\
    &u(x,0)   = 0 &&\mbox{for}\,\,\,x\in{\mathbb R}^d ,\quad \mbox{if\,\,\,$0<\alpha<1$,}\\
    &u(x,0) =\partial_t u(x,0)   = 0 &&\mbox{for}\,\,\,x\in{\mathbb R}^d ,\quad \mbox{if\,\,\,$1<\alpha<2$,}
    \end{aligned}
    \right.
\end{align}
where the fractional Laplacian $(-\Delta)^\frac{1}{2}v$ is defined by
$$
(-\Delta)^\frac{1}{2}v:={\mathcal F}^{-1}_\xi\big(|\xi|({\mathcal F}v)(\xi)\big),
\quad\forall\, v\in   W^{1,q}({\mathbb R}^d) .
$$
For $X:=L^q({\mathbb R}^d)$ and
$D_q((-\Delta)^\frac{1}{2} )
:= W^{1,q}({\mathbb R}^d)$, $1< q<\infty$,
the fractional operator
$-(-\Delta)^\frac{1}{2}:W^{1,q}({\mathbb R}^d)\rightarrow L^q({\mathbb R}^d)$ is
also $R$-sectorial of angle $\theta$ for arbitrary $\theta\in(0,\pi)$
\cite[proof of Proposition 2.2]{AkrivisLi2017}.
Hence, by Theorems \ref{thm:Euler}, \ref{thm:SBD} and \ref{thm:L1}, the backward Euler,
BDF2 and L1 schemes all satisfy the following
maximal $\ell^p$-regularity when $0<\alpha<2$ and $\alpha\neq 1$
\begin{equation*}
  \|(\bar\partial_\tau^\alpha u^n)_{n=1}^N\|_{\ell^p(L^q(\Omega))}+ \|((-\Delta)^\frac{1}{2} u^n)_{n=1}^N\|_{\ell^p(L^q(\Omega))}\leq c_{p,q}\|(f^n)_{n=0}^N\|_{\ell^p(L^q(\Omega))} .
\end{equation*}
By Theorem \ref{THM:CN-1}, the fractional Crank--Nicolson scheme also satisfies this estimate when $0<\alpha<1$.
}
\end{example}

\begin{example}\label{Exp-continuous}
{\upshape({\bf Fractional PDEs with complex coefficients})
Consider the following time-fractional PDE with a complex coefficient in a bounded Lipschitz domain
$\Omega\subset{\mathbb R}^d$ ($d\ge 1$):
\begin{align}\label{eqn:Laplace}
    \left\{
    \begin{aligned}
    &\partial_t^\alpha u(x,t) -e^{{\rm i}\varphi}\Delta u(x,t) =f(x,t)
    &&\mbox{for}\,\,\,(x,t)\in \Omega\times{\mathbb R}^+,\\
    & u (x,t) = 0 && \mbox{for}\,\,\, (x,t)\in \partial\Omega\times \mathbb{R}^+,\\
    &u(x,0)   = 0 &&\mbox{for}\,\,\,x\in \Omega ,\quad \mbox{if\,\,\,$0<\alpha<1$,}\\
    &u(x,0) =\partial_t u(x,0)   = 0 &&\mbox{for}\,\,\,x\in \Omega ,\quad \mbox{if\,\,\,$1<\alpha<2$,}
    \end{aligned}
    \right.
\end{align}
where $\varphi\in(-\pi,\pi)$ is given. It is worth noting that if $\varphi\in(\pi/2,\pi)\cup (-\pi,-\pi/2)$,
then \eqref{eqn:Laplace} is a diffusion-wave problem, since the operator $-e^{{\rm i}\varphi}\Delta$
has eigenvalues with negative real part. For $X:=L^q(\Omega)$ and
$D_q(e^{{\rm i}\varphi}\Delta):=D_q(\Delta)$, $1< q<\infty$,
the operator $e^{{\rm i}\varphi}\Delta :D_q(\Delta)\rightarrow L^q(\Omega)$ is
$R$-sectorial of angle $\theta$ for arbitrary $\theta\in(0,\pi-\varphi)$.
Hence, by Theorems \ref{thm:Euler}, \ref{thm:SBD} and \ref{thm:L1}, the backward Euler, BDF2 and L1 schemes satisfy the
maximal $\ell^p$-regularity estimate \eqref{Max-reg-Laplacian} when
$0<\alpha<2-2\varphi/\pi$ and $\alpha\neq 1$; the fractional Crank--Nicolson
scheme also satisfies the estimate \eqref{Max-reg-Laplacian} when $0<\alpha<\min(2-2\varphi/\pi,1)$.
}
\end{example}

As an application of the maximal $\ell^p$-regularity,
we present error estimates for the numerical solutions
by the BE scheme \eqref{eqn:Euler}, with the scaled $L^p$-norm \eqref{Lpnorm}.
Other time-stepping schemes can be analyzed similarly.
\begin{theorem}\label{THM:Error-estimate}
Let $A:D(A)\rightarrow X$ be an $R$-sectorial operator of angle $\alpha\pi/2$, with
$\alpha\in(0,2)$ and $\alpha\neq 1$, and the solution $u$ of \eqref{eqn:fde} be
sufficiently smooth. Then the solution of the BE scheme \eqref{eqn:Euler} satisfies for any $1<p<\infty$
\begin{equation}\label{Error-Estimate:BE}
  \|\bar\partial_\tau^\alpha (u^n- u(t_n))_{n=1}^N \|_{L^p(X)}
  + \|A(u^n-u(t_n))_{n=1}^N\|_{L^p(X)}
  \leq c_{p}\, \tau .
\end{equation}
\end{theorem}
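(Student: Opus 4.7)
The plan is to reduce the error estimate to the discrete maximal $\ell^p$-regularity of Theorem \ref{thm:Euler}, with the truncation error of the convolution quadrature as the forcing term. First I would introduce the nodal error $e^n := u^n - u(t_n)$ (so that $e^0 = 0$) and subtract equation \eqref{eqn:fde} evaluated at $t = t_n$ from \eqref{eqn:Euler} to obtain the perturbed scheme
\begin{equation*}
\bar\partial_\tau^\alpha e^n = A e^n - \rho^n, \qquad n = 1, \ldots, N,
\end{equation*}
where the consistency error of the BE convolution quadrature is
\begin{equation*}
\rho^n := \bar\partial_\tau^\alpha u(t_n) - \partial_t^\alpha u(t_n).
\end{equation*}
Since $e^0 = 0$ and $A$ is $R$-sectorial of angle $\alpha\pi/2$, Theorem \ref{thm:Euler}, applied to this error equation with source $-\rho^n$ and rewritten in the scaled $L^p(X)$-norm (which differs from the $\ell^p(X)$-norm only by the factor $\tau^{1/p}$ on both sides), yields
\begin{equation*}
\|(\bar\partial_\tau^\alpha e^n)_{n=1}^N\|_{L^p(X)} + \|(A e^n)_{n=1}^N\|_{L^p(X)} \le c_p\, c_R\, \|(\rho^n)_{n=1}^N\|_{L^p(X)}.
\end{equation*}

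It then suffices to show the pointwise consistency bound $\|\rho^n\|_X \le C\tau$, uniformly in $n$. This is the classical first-order accuracy of the Gr\"unwald--Letnikov/BE convolution quadrature for the Caputo derivative, and can be established from the generating-function identity $\sum_j b_j\xi^j = (1-\xi)^\alpha$ together with Cauchy's integral formula along a suitable Hankel contour, or alternatively by a direct Taylor expansion of $u$ around each node combined with the sharp asymptotics of the CQ weights $b_j$. Under the hypothesis that $u$ is sufficiently smooth with $u(0)=0$ (for $0<\alpha<1$) and additionally $u'(0)=0$ (for $1<\alpha<2$)---which are precisely the initial conditions imposed on \eqref{eqn:fde}---the bound $\|\rho^n\|_X \le C\tau$ follows, with $C$ depending on $\|u\|_{C^2([0,T];X)\cap C^1([0,T];D(A))}$ but independent of $\tau$ and $n$. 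Consequently $\|(\rho^n)_{n=1}^N\|_{L^p(X)} \le C\tau\,T^{1/p}$, which combined with the maximal regularity bound above yields \eqref{Error-Estimate:BE}.

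The main obstacle is the uniform consistency estimate just described, specifically its behaviour near $t=0$: without the vanishing of $u$ (and $u'$ when $\alpha>1$) at the origin, an initial-layer term of order $\tau\,t_n^{-\alpha}$ (respectively $\tau\,t_n^{-\alpha+1}$) would appear in the expansion of $\bar\partial_\tau^\alpha u(t_n) - \partial_t^\alpha u(t_n)$ and would spoil the uniform $O(\tau)$ bound, producing at best a bound like $\|\rho^n\|_{L^p(X)} \le C\tau^{\min(1,1+1/p-\alpha)}$ when integrated. It is precisely the initial conditions assumed in \eqref{eqn:fde} that allow one to absorb those leading low-order terms in the symbol expansion of $\bar\partial_\tau^\alpha - \partial_t^\alpha$ and recover the full first-order consistency in $\tau$ required by the theorem.
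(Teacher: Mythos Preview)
Your proposal is correct and follows essentially the same route as the paper: define the nodal error $e^n=u^n-u(t_n)$, derive the perturbed scheme $\bar\partial_\tau^\alpha e^n = A e^n - E^n$ with truncation error $E^n=\bar\partial_\tau^\alpha u(t_n)-\partial_t^\alpha u(t_n)$, invoke the first-order consistency $\max_n\|E^n\|_X\le c\tau$ of the BE convolution quadrature (the paper simply cites \cite{Gorenflo:1995} for this, whereas you sketch the argument and the role of the zero initial data), and then apply Theorem~\ref{thm:Euler}.
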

\begin{proof}$\,\,$
We denote by $e^n:=u^n-u(t_n)$ the error of the numerical solution $u^n$.
Then $e^n$ satisfies
\begin{equation}\label{eqn:Error-BE}
\bar\partial_\tau^\alpha e^n = A e^n -E^n,  \quad n=1,2,\dots \\
\end{equation}
with $e^0=0$, where $E^n:=\bar\partial_\tau^\alpha u(t_n)-\partial_t^\alpha u(t_n)$
denotes the truncation error, satisfying
$\displaystyle\max_{1\le n\le N}\|E^n\|_{X} \le c\tau  $ \cite{Gorenflo:1995}.
By applying Theorem \ref{thm:Euler} to \eqref{eqn:Error-BE},
we obtain
\begin{equation*}
  \|(\bar\partial_\tau^\alpha e^n)_{n=1}^N\|_{L^p(X)} + \|(Ae^n)_{n=1}^N\|_{L^p(X)}\leq c_{p,X}c_R\|(E^n)_{n=1}^N\|_{L^p(X)}
  \le c_p\, \tau .
\end{equation*}\\[-25pt]
\qed\end{proof}

If $\Omega$ is a bounded smooth domain,
$X=L^q(\Omega)$, $1<q<\infty$, $D(A)=W^{2,q}(\Omega)\cap W^{1,q}_0(\Omega)$ and $A=\Delta$ (the Dirichlet Laplacian),
then the conditions of Theorem \ref{THM:Error-estimate} are satisfied, provided that the solution $u$ is smooth,
and \eqref{Error-Estimate:BE} gives that for any $1<p,q<\infty$
\begin{equation*}
\|(u^n-u(t_n))_{n=1}^N\|_{L^p(W^{2,q}(\Omega))}
\le c_{q}\|\Delta (u^n-u(t_n))_{n=1}^N\|_{L^p(L^q(\Omega))}
  \leq c_{p,q}\, \tau.
\end{equation*}
When $q>d$, error estimates in such strong norms as $W^{2,q}(\Omega)\hookrightarrow W^{1,\infty}(\Omega)$ can be used to
control some strong nonlinear terms in the numerical analysis of nonlinear parabolic problems
\cite{AkrivisLi2017,AkrivisLiLubich2016,Geissert:2007}. We will explore such an analysis in the future.


\appendix
\renewcommand{\thelemma}{A.\arabic{lemma}}
\renewcommand{\thetheorem}{A.\arabic{theorem}}
\renewcommand{\theequation}{A.\arabic{equation}}
\setcounter{equation}{0}

\section*{Appendix: $R$-sectorial property of $\Delta_q$}\label{app:R-sectorial}
In this appendix, we show that the $L^q$ realization $\Delta_q$ in
$X=L^q(\Omega)$ of $\Delta$ is an $R$-sectorial operator in $X$ with
angle $\theta\in(0,\pi)$ (see also \cite[Lemma 8.2]{AkrivisLiLubich2016} for
related discussions).


Let $\Delta_2$ be the restriction of the operator $\Delta$ to the domain $D(\Delta_2)=\{v\in H^1_0(\Omega):\Delta v\in L^2(\Omega)\}$.
Then the densely defined self-adjoint operator $\Delta_2: D(\Delta_2)\rightarrow L^2(\Omega)$ generates a bounded analytic semigroup $E_2(t):L^2(\Omega)\rightarrow L^2(\Omega)$ \cite[Example 3.7.5]{ABHN}, which extends to a bounded analytic semigroup $E_q(t)$ on $L^q(\Omega)$, $1<q<\infty$ \cite[Theorem 3.1]{Ouhabaz1995}, such that
\begin{align}
&E_{q_1}(t)v=E_{q_2}(t)v, &&\forall\, v\in L^{q_1}(\Omega)\cap L^{q_2}(\Omega), \label{Eq1=Eq2}\\
&E_q(t)v=\int_\Omega G(t,x,y)v(y) dy ,&&\forall\, v\in L^q(\Omega) ,\nonumber
\end{align}
where $G(t,x,y)$ is the kernel of the semigroup $E_2(t)$, i.e., the parabolic Green's function. It satisfies the
following Gaussian estimate \cite[Corollary 3.2.8]{Davies1989}:
\begin{align}\label{gaussian}
0 \le G(t,x,y) \le ct^{-\frac{d}{2}}e^{-\frac{|x-y|^2}{ct}} .
\end{align}
Let $\Delta_q$ be the generator of the semigroup $E_q(t)$, with its domain
\cite[Proposition 3.1.9, g]{ABHN}
\begin{align}\label{Def-DDeltaq}
D(\Delta_q)=\Big\{v\in L^q(\Omega): \lim_{t\downarrow 0}\frac{E_q(t)v-v}{t}\,\,\mbox{exists in $L^q(\Omega)$}\Big\}.
\end{align}
\eqref{Eq1=Eq2} and \eqref{Def-DDeltaq} imply that
\begin{align*}
&D(\Delta_{q_2})\subset D(\Delta_{q_1})
&&\mbox{for $1<q_1<q_2<\infty$},\\
&\Delta_{q_1}v=\Delta_{q_2}v &&\mbox{for $v\in D(\Delta_{q_2})\cap D(\Delta_{q_1})$} .
\end{align*}
In particular, we have
\begin{align}\label{Deltaq-Delta}
\Delta_qv=\Delta v,\quad\forall\, v\in D(\Delta_q)\cap D(\Delta_2),
\quad\forall\, 1<q<\infty.
\end{align}
The Gaussian estimate \eqref{gaussian} yields
\begin{align*}
\|E_q(t/2)v\|_{L^2(\Omega)}\le ct^{-\frac{d}{2}}\|v\|_{L^1(\Omega)}
\le ct^{-\frac{d}{2}}\|v\|_{L^q(\Omega)},\quad\forall\, v\in L^q(\Omega) .
\end{align*}
That is, $E_q(t/2)v\in L^2(\Omega)$ for $v\in L^q(\Omega)$ and $t>0$.
Hence, \eqref{Eq1=Eq2} implies
\begin{align}\label{EqinD2}
E_q(t)v=E_q(t/2)E_q(t/2)v=E_2(t/2)E_q(t/2)v\in D(\Delta_2) ,
\end{align}
where the last inclusion is due to the analyticity of the semigroup $E_2(t)$ \cite[Theorem 3.7.19]{ABHN}.
Then \eqref{Deltaq-Delta} and \eqref{EqinD2} imply
\begin{align*}
\lim_{t\downarrow 0}\|\Delta E_q(t) v-\Delta_q v\|_{L^q(\Omega)}
&= \lim_{t\downarrow 0} \|\Delta_q E_q(t) v-\Delta_q v\|_{L^q(\Omega)}
= 0, \quad\forall\, v\in D(\Delta_q) .
\end{align*}
Since $\displaystyle \lim_{t\downarrow 0}\|E_q(t) v-v\|_{L^q(\Omega)}=0$,
the last identity implies
\begin{align*}
(\Delta_q v,\varphi)=
\lim_{t\downarrow 0}(\Delta E_q(t) v,\varphi)
=\lim_{t\downarrow 0}(E_q(t) v,\Delta \varphi)
=(v,\Delta \varphi) ,
\quad\forall\, v\in D(\Delta_q),\,\,\,\forall\,\varphi\in C^\infty_0(\Omega).
\end{align*}
That is, $\Delta_q v$ coincides with the distributional partial derivative $\Delta v$ in the sense of distributions, i.e.,
\begin{align}\label{Deltaqv=Deltav}
\Delta_q v=\Delta v,
\quad\forall\, v\in D(\Delta_q) ,\quad\forall\, 1<q<\infty .
\end{align}

\begin{remark}$\,$
If the domain $\Omega$ is smooth or convex, then we have the characterization
\begin{align*}
D(\Delta_q)=\{v\in W^{1,q}_0(\Omega):
\Delta v\in L^q(\Omega)\} .
\end{align*}
However, this characterization does not hold in general bounded Lipschitz domains (e.g., nonconvex polygons).
In a general bounded Lipschitz domain, the operator $\Delta_2^{-1}:L^2(\Omega)\rightarrow L^2(\Omega)$ has an extension
$\Delta^{-1}:L^1(\Omega)\rightarrow L^1(\Omega)$, given by \cite{Gruter1982}
$$
\Delta^{-1}v(x)=\int_\Omega {\mathcal G}(x,y)v(y) dy
$$
in terms of the elliptic Green's function ${\mathcal G}(x,y)$,
satisfying
\begin{align*}
&\Delta^{-1}v=\Delta_q^{-1}v,
\quad \forall\, v\in L^q(\Omega).
\end{align*}
Hence, we have the characterization $D(\Delta_q)=\{\Delta^{-1}v:v\in L^q(\Omega)\}$.
\end{remark}

\bibliographystyle{abbrv}
\bibliography{frac}



\end{document}